\newcommand{\email}[1]{\href{mailto:#1}{\tt #1}}
\theoremstyle{theorem}
\newtheorem{theorem}{Theorem}
\newtheorem{lemma}[theorem]{Lemma}
\newtheorem{proposition}[theorem]{Proposition}
\newtheorem{corollary}[theorem]{Corollary}
\theoremstyle{remark}
\newtheorem{remark}[theorem]{Remark}
\theoremstyle{definition}
\newtheorem{assumption}[theorem]{Assumption}
\newcommand{\VEC}[1]{\boldsymbol{#1}}
\newcommand{\MAT}[1]{\boldsymbol{#1}}
\newcommand{\trans}{^{\mathrm T}}
\DeclareMathOperator{\opcard}{card}
\newcommand{\card}[1]{\opcard(#1)}
\DeclareMathOperator{\curl}{curl}
\newcommand{\Real}{\mathbb{R}}
\newcommand{\Natural}{\mathbb{N}}
\newcommand{\Poly}[1]{\mathbb{P}^{#1}}
\newcommand{\RTN}[1][k]{\mathbb{RTN}^{#1}}
\newcommand{\IRTN}[1][k]{\VEC{I}_{\mathbb{RTN},h}^{#1}}
\newcommand{\IRTNT}[1][k]{\VEC{I}_{\mathbb{RTN},T}^{#1}}
\newcommand{\SCAL}{{\cdot}}
\newcommand{\GRAD}{\VEC{\nabla}}
\newcommand{\GRADs}{\MAT{\nabla}_{\rm s}}
\newcommand{\GRADss}{\MAT{\nabla}_{\rm ss}}
\newcommand{\DIV}{\VEC{\nabla}{\cdot}}
\newcommand{\LAPL}{{\Delta}}
\newcommand{\norm}[2][]{\|#2\|_{#1}}
\newcommand{\seminorm}[2][]{|#2|_{#1}}
\newcommand{\term}{\mathfrak{T}}
\newcommand{\st}{\,:\,}
\newcommand{\Hdiv}[1][\Omega]{\VEC{H}({\rm div};#1)}
\newcommand{\Th}[1][h]{\mathcal{T}_{#1}}
\newcommand{\Fh}[1][h]{\mathcal{F}_{#1}}
\newcommand{\Fhi}[1][h]{\mathcal{F}_{#1}^{{\rm i}}}
\newcommand{\Fhb}[1][h]{\mathcal{F}_{#1}^{{\rm b}}}
\newcommand{\normal}{\VEC{n}}
\newcommand{\darcy}{{\rm D}}
\newcommand{\stokes}{{\rm S}}
\newcommand{\strain}{\MAT{\varepsilon}}
\newcommand{\Cf}[1][T]{C_{{\rm f},#1}}
\newcommand{\uvec}[1]{\underline{\VEC{#1}}}
\newcommand{\cvec}[1]{\check{\VEC{#1}}}
\newcommand{\UT}{\uvec{U}_T^k}
\newcommand{\Uh}{\uvec{U}_h^k}
\newcommand{\UhD}{\uvec{U}_{h,0}^k}
\newcommand{\lproj}[2][h]{\pi_{#1}^{#2}}
\newcommand{\vlproj}[2][h]{\VEC{\pi}_{#1}^{#2}}
\newcommand{\elproj}[2][T]{\VEC{\pi}_{\strain,#1}^{#2}} 
\newcommand{\IT}[1][k]{\uvec{I}_T^{#1}}
\newcommand{\Ih}{\uvec{I}_h^k}
\newcommand{\rST}{\VEC{r}_{\stokes,T}^{k+1}}
\newcommand{\rDT}[1][k]{\VEC{r}_{\darcy,T}^{#1}}
\newcommand{\rDh}{\VEC{r}_{\darcy,h}^k}
\newcommand{\dT}{\VEC{\delta}_{\stokes,T}^l}
\newcommand{\dTF}{\VEC{\delta}_{\stokes,TF}^k}
\newcommand{\dDT}[1][l]{\VEC{\delta}_{\darcy,T}^{#1}}
\newcommand{\dDTF}[1][k]{\VEC{\delta}_{\darcy,TF}^{#1}}
\newcommand{\err}{\mathfrak{R}(\VEC{u},p)}
\newcommand{\Err}[1][h]{\mathcal{E}_{#1}}
\newcommand{\sca}[1]{\widehat{#1}}
\begin{document}

\title{A Hybrid High-Order discretisation of the Brinkman problem robust in the Darcy and Stokes limits}

\author{Lorenzo Botti\thanks{Department of Engineering and Applied Sciences, University of Bergamo (Italy), \email{lorenzo.botti@unibg.it}}
  \and
  Daniele A. Di Pietro\thanks{Institut Montpelli\'erain Alexander Grothendieck, Univ Montpellier, CNRS (France), \email{daniele.di-pietro@umontpellier.fr}}
  \and
  J\'er\^ome Droniou\thanks{School of Mathematical Sciences, Monash University, Melbourne (Australia), \email{jerome.droniou@monash.edu}}
}

\maketitle

\begin{abstract}
  In this work, we develop and analyse a novel Hybrid High-Order discretisation of the Brinkman problem.
  The method hinges on hybrid discrete velocity unknowns at faces and elements and on discontinuous pressures.
  Based on the discrete unknowns, we reconstruct inside each element a Stokes velocity one degree higher than face unknowns, and a Darcy velocity in the Raviart--Thomas--N\'ed\'elec space.
  These reconstructed velocities are respectively used to formulate the discrete versions of the Stokes and Darcy terms in the momentum equation, along with suitably designed penalty contributions.
  The proposed construction is tailored to yield optimal error estimates that are robust throughout the entire spectrum of local (Stokes- or Darcy-dominated) regimes, as identified by a dimensionless number which can be interpreted as a friction coefficient.
  The singular limit corresponding to the Darcy equation is also fully supported by the method.
  Numerical examples corroborate the theoretical results.
  This paper also contains two contributions whose interest goes beyond the specific method and application treated in this work:
  an investigation of the dependence of the constant in the second Korn inequality on star-shaped domains and
  its application to the study of the approximation properties of the strain projector in general Sobolev seminorms.
  \medskip\\
  \textbf{Key words.} Brinkman,
  Darcy,
  Stokes,
  Hybrid High-Order methods,
  Korn inequality,
  strain projector
  \medskip\\
  \textbf{AMS subject classification.} 65N30, 65N08, 76S05, 76D07
\end{abstract}


\section{Introduction}

In this work, we develop and analyse a novel Hybrid High-Order (HHO) method for the Brinkman problem robust across the entire range of (Stokes- or Darcy-dominated) local regimes.

Let $\Omega\subset\Real^d$, $d\in\{2,3\}$, denote a bounded connected open polygonal (if $d=2$) or polyhedral (if $d=3$) set that does not have cracks, i.e., it lies on one side of its boundary $\partial\Omega$.
Let two functions $\mu:\Omega\to\Real$ and $\nu:\Omega\to\Real$ be given corresponding, respectively, to the fluid viscosity and to the ratio between the viscosity and the permeability of the medium.
In what follows, we assume that there exist real numbers $\underline{\mu},\overline{\mu}$ and $\underline{\nu},\overline{\nu}$ such that, almost everywhere in $\Omega$,
\begin{equation}\label{eq:mu.nu:bounds}
  0<\underline{\mu}\le\mu\le\overline{\mu},\qquad
  0\le\underline{\nu}\le\nu\le\overline{\nu}.
\end{equation}
Let $\VEC{f}:\Omega\to\Real^d$ and $g:\Omega\to\Real$ denote volumetric source terms.
The Brinkman problem reads:
Find the velocity $\VEC{u}:\Omega\to\Real^d$ and the pressure $p:\Omega\to\Real$ such that
\begin{subequations}\label{eq:strong}
  \begin{alignat}{2}
    \label{eq:strong:momentum}
    -\DIV(2\mu\GRADs\VEC{u}) + \nu\VEC{u} + \GRAD p &= \VEC{f} &\qquad&\text{in $\Omega$},
    \\ \label{eq:strong:mass}
    \DIV\VEC{u} &= g &\qquad&\text{in $\Omega$},
    \\ \label{eq:bc}
    \VEC{u} &= \VEC{0} &\qquad&\text{on $\partial\Omega$},
    \\ \label{eq:zero.mean.pressure}
    \int_\Omega p &= 0,
  \end{alignat}
\end{subequations}
where $\GRADs$ denotes the symmetric part of the gradient.
The PDE \eqref{eq:strong} locally behaves like a Stokes or a Darcy problem depending on the value of a dimensionless parameter, which can be interpreted as a local friction coefficient.
Our goal is to handle both situations robustly, while keeping the usual convergence properties of HHO methods.

The literature on the discretisation of problem \eqref{eq:strong} is vast, and giving a detailed account lies out of the scope of the present work.
As noticed in \cite{Mardal.Tai.ea:02}, the construction of a finite element which is uniformly well-behaved for both the Stokes and Darcy problems is not trivial.
Some choices tailored to the Stokes problem fail to convergence in the Darcy limit (as is the case for the unstabilised Crouzeix--Raviart finite element \cite{Crouzeix.Raviart:73}), or experience a loss of convergence and, possibly, a lack of convergence for the divergence of the velocity (as is the case for the Taylor--Hood element \cite{Taylor.Hood:73} or the minielement \cite{Arnold.Brezzi.ea:84}).
Concerning the Crouzeix--Raviart element, a possible fix was proposed in \cite{Burman.Hansbo:05} based on jump penalisation terms inspired by Discontinuous Galerkin methods.
In \cite{Burman.Hansbo:07}, the same authors study a discretisation based on piecewise linear velocities and piecewise constant pressures for which (generalised) inf--sup stability is obtained through pressure stabilisation.
Stabilised equal-order finite elements are also proposed and analysed in \cite{Braack.Schieweck:11}.
A generalisation of the classical minielement is studied in \cite{Juntunen.Stenberg:10}, where uniform a priori and a posteriori error estimates are derived.
The use of Darcy-tailored, $H({\rm div};\Omega)$-conforming finite element methods is investigated in \cite{Konno.Stenberg:11}, where the continuity of the tangential component of the velocity across interfaces is enforced via symmetric interior penalty terms.
Finite element methods have also been developed starting from weak formulations different from the one discussed in Section \ref{sec:continuous} below.
Vorticity--velocity--pressure formulations are considered, e.g., in \cite{Anaya.Gatica.ea:15,Anaya.Mora.ea:16}.
Finally, new generation technologies have been recently proposed for the discretisation of problem \eqref{eq:strong}.
We cite, in particular, the isogeometric divergence-conforming B-splines of \cite{Evans.Hughes:13},
the Weak Galerkin method of \cite{Mu.Wang.ea:14},
the two-dimensional Virtual Element methods of \cite{Caceres.Gatica.ea:17,Vacca:18} (see also the related work \cite{Beirao-da-Veiga.Lovadina.ea:17}),
and the multiscale hybrid-mixed method of \cite{Araya.Harder.ea:17}.

In the HHO method studied here, for a given polynomial degree $k\ge 1$, the discrete unknowns for the velocity are vector-valued polynomials of total degree $\le k$ over the mesh faces and of degree $\le l\coloneq\max(k-1,1)$ inside the mesh elements.
The discrete unknowns for the pressure are scalar-valued polynomials of degree $\le k$ inside each element.
Based on the discrete velocity unknowns, we reconstruct, inside each mesh element $T$:
(i) a Stokes velocity inspired by \cite{Di-Pietro.Ern:15} which yields the strain projector of degree $(k+1)$ inside $T$ when composed with the local interpolator and
(ii) a Darcy velocity in the local Raviart--Thomas--N\'ed\'elec space \cite{Raviart.Thomas:77,Nedelec:80} of degree $k$.
The Stokes and Darcy velocity reconstructions are used to formulate the discrete counterparts of the first and second terms in \eqref{eq:strong:momentum}.
Coercivity is ensured by stabilisation terms that penalise the difference between the discrete unknowns and the interpolate of the corresponding reconstructed velocity.
Owing to this finely tailored construction, the resulting method behaves robustly across the entire range of local (Stokes- or Darcy-dominated) regimes.

We carry out an exhaustive analysis of the method.
We first show in Theorem \ref{thm:well-posedness} that the method is inf--sup stable and, based on this result, that the discrete problem is well-posed.
We next prove in Theorem \ref{thm:err.est} an estimate in $h^{k+1}$ (with $h$ denoting, as usual, the meshsize) for the energy-norm of the error defined as the difference between the discrete solution and the interpolate of the continuous solution.
This estimate is robust in the sense that the multiplicative constant in the right-hand side:
(i) is prevented from exploding in both the Stokes- and Darcy-limits by cutoff factors;
(ii) has an explicit dependence on the local friction coefficient that shows how the relative importance of the Stokes- and Darcy-contributions varies according to the local regime;
(iii) does not depend on the pressure, thereby ensuring robustness when $\VEC{f}$ has large irrotational part (see \cite{Di-Pietro.Ern.ea:16} and references therein for further insight into this point).
The Darcy velocity reconstruction in the Raviart--Thomas--N\'ed\'elec space plays a key role in achieving the aforementioned robust features while retaining optimal convergence.
We point out that, to the best of our knowledge, estimates for the Brinkman problem where the various local regimes are identified by a dimensionless number are new, and they contribute to shedding new light on aspects of this problem that had often been previously treated only in a more qualitative fashion.
Finally, it is worth mentioning that the theoretical results extend to the Darcy problem (corresponding to $\mu=0$ and $\underline{\nu}>0$) thanks to a stabilisation term that strengthens the coercivity norm for the Darcy term in \eqref{eq:strong:momentum}; see Remark \ref{rem:darcy} and the numerical tests in Section \ref{sec:numerical.examples}.

Besides the results specific to the Brinkman problem, this paper also contains two important contributions of more general interest.
The first contribution is a study of the dependence of the constant in the second Korn inequality for polytopal domains that are star-shaped with respect to every point of a ball.
We show, in particular, that this type of inequality holds uniformly inside each mesh element when considering regular mesh sequences, a key point to prove stability and error estimates for discretisation methods.
The second contribution of general interest, linked to the latter point, are optimal approximation results for the strain projector, stated in Theorem \ref{thm:elproj:approx} and Corollary \ref{cor:elproj:approx.trace}, which extend \cite[Lemma 2]{Di-Pietro.Ern:15} to more general Sobolev seminorms.
The proof hinges on the framework of \cite[Section 2.1]{Di-Pietro.Droniou:17*1} for the study of projectors on local polynomial spaces, based in turn on the classical theory of \cite{Dupont.Scott:80}.

The rest of the paper is organised as follows.
In Section \ref{sec:continuous} we recall a classical weak formulation of problem \eqref{eq:strong}.
In Section \ref{sec:setting} we discuss the discrete setting: mesh, local and broken polynomial spaces, and $L^2$-orthogonal projectors thereon.
In Section \ref{sec:discrete} we describe the construction underlying the HHO method, formulate the discrete problem, and state the main results (whose proofs are postponed to Section \ref{sec:proofs}).
Numerical results are collected in Section \ref{sec:numerical.examples}.
The paper is completed by an appendix made of two sections. \ref{app:korn} is dedicated to proving a uniform Korn inequality for star-shaped polytopal sets. This inequality is used in \ref{app:elproj.approx} to study the approximation properties of the strain projector on local polynomial spaces for such sets.
The material is structured so that multiple levels of reading are possible. Readers mainly interested in the numerical recipe and results can focus on Sections \ref{sec:continuous} to \ref{sec:numerical.examples}. %
Those interested in the details of the convergence analysis can additionally consult Section \ref{sec:proofs} and, possibly, \ref{sec:appen}.


\section{Continuous problem}\label{sec:continuous}

In what follows, for any $X\subset\Omega$, we denote by $({\cdot},{\cdot})_X$ the usual inner product of $L^2(X)$, by $\|{\cdot}\|_X$ the corresponding norm, and we adopt the convention that the subscript is omitted whenever $X=\Omega$.
The same notation is used for the spaces of vector- and tensor-valued functions $L^2(X)^d$ and $L^2(X)^{d\times d}$, respectively.
We assume henceforth that $\VEC{f}\in L^2(\Omega)^d$ and $g\in L^2(\Omega)$.
Setting
\begin{equation}\label{eq:U.P}
  \VEC{U}\coloneq H^1_0(\Omega)^d,\qquad
  P\coloneq\left\{q\in L^2(\Omega)\st\int_\Omega q=0\right\},
\end{equation}
the weak formulation of problem \eqref{eq:strong} reads:
Find $(\VEC{u},p)\in\VEC{U}\times P$ such that
\begin{subequations}\label{eq:weak}
  \begin{alignat}{2}\label{eq:weak:momentum}
    a(\VEC{u},\VEC{v}) + b(\VEC{v},p) &= (\VEC{f},\VEC{v}) &\qquad& \forall\VEC{v}\in\VEC{U},
    \\ \label{eq:weak:mass}
    -b(\VEC{u},q) &= (g,q) &\qquad& \forall q\in P,
  \end{alignat}
\end{subequations}
with bilinear forms $a:\VEC{U}\times\VEC{U}\to\Real$ and $b:\VEC{U}\times P\to\Real$ such that
$$
a(\VEC{u},\VEC{v})\coloneq (2\mu\GRADs\VEC{u},\GRADs\VEC{v}) + (\nu\VEC{u},\VEC{v}),\qquad
b(\VEC{v},q)\coloneq -(\DIV\VEC{v},q).
$$
We recall that, for a vector-valued function $\VEC{u}=(u_i)_{i=1,\ldots,d}$,
$\GRAD \VEC{u}$ is the matrix $(\partial_j u_i)_{i,j=1,\ldots,d}$ and the symmetric gradient of $\VEC{u}$ is $\GRADs \VEC{u}=\frac12(\GRAD \VEC{u}+(\GRAD \VEC{u})\trans)$. The well-posedness of \eqref{eq:weak} results from the Lax-Milgram theorem and the first Korn inequality, which states the existence of a constant $C$ such that, for all $\VEC{u}\in \VEC{U}$, $\norm{\GRAD \VEC{u}}\le C
\norm{\GRADs \VEC{u}}$; see, e.g., \cite[Lemma 5.3.2]{Allaire:09}.

We assume in what follows that both $\mu$ and $\nu$ are piecewise constant on a finite polygonal or polyhedral partition $P_\Omega=\{\Omega_i\st 1\le i\le N_\Omega\}$ of the domain.
The assumption that $\nu$ is piecewise constant is often verified in practice in subsoil modelling. On the other hand, the assumption that $\mu$ is piecewise constant does not have a particular physical meaning, and should be regarded as a reasonable compromise which enables us to address all the relevant mathematical difficulties related to the use of the symmetric gradient without having to deal with unnecessary technicalities.
We notice, in passing, that the extension of the method to the case where $\mu$ and $\nu$ vary polynomially inside each element is straightforward, and the analysis can be modified following the ideas of \cite{Di-Pietro.Ern:15*1}.
The case of smoothly varying $\nu$ is treated numerically in Section \ref{sec:numerical.examples:darcy} below.
The extension to nonlinear viscous terms is possible following the ideas of \cite{Botti.Di-Pietro.ea:17}, inspired in turn by \cite{Droniou.Lamichhane:15,Di-Pietro.Droniou:17,Di-Pietro.Droniou:17*1}.
The case when $\nu$ is a full tensor is a special case of the above.


\section{Discrete setting}\label{sec:setting}

We consider a conforming simplicial mesh $\Th$ of $\Omega$, i.e., a set of triangular (if $d=2$) or tetrahedral (if $d=3$) elements such that (i) every $T\in\Th$ has non-empty interior; (ii) two distinct mesh elements $T_1,T_2\in\Th$ have disjoint interiors; (iii) the intersection of two disjoint mesh elements is either the empty set or a common vertex, edge, or face (the latter case only if $d=3$); (iv) it holds $h=\max_{T\in\Th}h_T$, where $h_T$ denotes the diameter of $T\in\Th$.
It is additionally assumed that $\Th$ is compliant with the partition $P_\Omega$ on which both $\mu$ and $\nu$ are piecewise constant and we let, for all $T\in\Th$,
$$
\mu_T\coloneq\mu_{|T},\qquad
\nu_T\coloneq\nu_{|T}
$$
denote their constant values inside $T$.

For any mesh element $T\in\Th$, we denote by $\Fh[T]$ the set of its edges (if $d=2$) or faces (if $d=3$).
For the sake of conciseness, the term face will be used henceforth for both the two- and three-dimensional cases.
For any $T\in\Th$ and any $F\in\Fh[T]$, we denote by $\normal_{TF}$ the unit vector normal to $F$ pointing out of $T$.
The sets of internal and boundary faces are respectively denoted by $\Fhi$ and $\Fhb$, and we set $\Fh\coloneq\Fhi\cup\Fhb$.
The diameter of a face $F\in\Fh$ is denoted by $h_F$.
For any mesh element $T\in\Th$, we denote by $\Fhi[T]\coloneq\Fh[T]\cap\Fhi$ the set of internal faces lying on the boundary $\partial T$ of $T$.

Our focus is on the $h$-convergence analysis, so we consider a sequence of refined meshes $(\Th)_{h\in{\cal H}}$, where ${\cal H}\subset\Real_+^*$ denotes a countable set of meshsizes having 0 as its unique accumulation point.
From this point on we assume, without necessarily recalling this fact at each occurrence, that the mesh sequence is regular, i.e., there exists a real number $\varrho>0$ such that, for all $h\in{\cal H}$ and all $T\in\Th$, $h_T/r_T\le\varrho$, with $r_T$ denoting the inradius of $T$.
This implies, in particular, that the diameter of one element is uniformly comparable to those of its faces.

To avoid the proliferation of generic constants, we will write $a\lesssim b$ to mean $a\le Cb$ with multiplicative constant $C>0$ independent of $h$ and, for local inequalities, of the mesh element or face, as well as on the problem data $\mu$, $\nu$, $\VEC{f}$, and $g$, and on the corresponding exact solution $(\VEC{u},p)$.
The notation $a\simeq b$ means $a\lesssim b\lesssim a$.
When useful, the dependencies of the hidden constant are further specified.

The construction underlying HHO methods hinges on projectors on local polynomial spaces.
Let $X$ denote an open bounded connect set of $\Real^n$ with $n\in\{1,2,3\}$ (in what follows, $X$ will typically represent a mesh element or face).
For a given integer $\ell\ge 0$, we denote by $\Poly{\ell}(X)$ the space spanned by the restriction to $X$ of $n$-variate, real-valued polynomials of total degree $\le\ell$.
The local $L^2$-orthogonal projector $\lproj[X]{\ell}:L^2(X)\to\Poly{\ell}(X)$ is defined as follows:
For any $v\in L^2(X)$, $\lproj[X]{\ell}v\in\Poly{\ell}(X)$ is the unique polynomial that satisfies
\begin{equation}\label{eq:lproj}
  (\lproj[X]{\ell}v-v,w)_X=0\qquad\forall w\in\Poly{\ell}(X).
\end{equation}
As a projector, $\lproj[X]{\ell}$ is linear and idempotent so that, in particular, it holds $\lproj[X]{\ell}v=v$ for all $v\in\Poly{\ell}(X)$.
The vector and tensor versions of the $L^2$-projector, both denoted by $\vlproj[X]{\ell}$, are obtained applying $\lproj[X]{\ell}$ component-wise.
The following boundedness property follows from \cite[Corollary 3.7]{Di-Pietro.Droniou:17}: For any $X$ mesh element or face, any $s\in\{0,\ldots,\ell+1\}$ and any function $v\in H^s(X)$, it holds that
\begin{equation}\label{eq:lproj:boundedness}
  \seminorm[H^s(X)]{\lproj[X]{\ell}v}\lesssim\seminorm[H^s(X)]{v},
\end{equation}
with hidden constant equal to 1 for $s=0$.
Optimal approximation properties for the $L^2$-orthogonal projector have also been proved in \cite{Di-Pietro.Droniou:17} in a very general setting.
For the present discussion, it will suffice to recall the following results, that are a special case of \cite[Lemmas 3.4 and 3.6]{Di-Pietro.Droniou:17}: Let an integer $s\in\{0,\ldots,\ell+1\}$ be given. Then, for any mesh element $T\in\Th$, any function $v\in H^s(T)$, and any exponent $m\in\{0,\ldots,s\}$, it holds that
\begin{equation}\label{eq:lproj:approx}
  \seminorm[H^m(T)]{v-\lproj[T]{\ell}v}\lesssim h_T^{s-m}\seminorm[H^s(T)]{v}.
\end{equation}
Moreover, if $s\ge 1$ and $m\le s-1$,
\begin{equation}\label{eq:lproj:approx.trace}
  \seminorm[{H^m(\Fh[T])}]{v-\lproj[T]{\ell}v}\lesssim h_T^{s-m-\frac12}\seminorm[H^s(T)]{v},
\end{equation}
where $H^m(\Fh[T])\coloneq\left\{v\in L^2(\partial T)\st v_{|F}\in H^m(F)\mbox{ for all }F\in\Fh[T]\right\}$ is the broken Sobolev space on $\Fh[T]$ and $\seminorm[{H^m(\Fh[T])}]{{\cdot}}$ the corresponding broken seminorm.

At the global level, we denote by $\Poly{\ell}(\Th)$ the space of broken polynomials on $\Th$ whose restriction to every mesh element $T\in\Th$ lies in $\Poly{\ell}(T)$.
The corresponding global $L^2$-orthogonal projector $\lproj{\ell}:L^2(\Omega)\to\Poly{\ell}(\Th)$ is such that, for all $v\in L^2(\Omega)$,
\begin{equation}\label{eq:lprojh}
  (\lproj{\ell}v)_{|T} \coloneq \lproj[T]{\ell}v_{|T}\qquad\forall T\in\Th.
\end{equation}
Also in this case, the vector version $\vlproj{\ell}:L^2(\Omega)^d\to\Poly{\ell}(\Th)^d$ is obtained applying $\lproj{\ell}$ component-wise.
The regularity requirements in the error estimates will be expressed in terms of the broken Sobolev spaces
$$
H^s(\Th)\coloneq\left\{
v\in L^2(\Omega)\st v_{|T}\in H^s(T)\quad\forall T\in\Th
\right\}.
$$


\section{Discrete problem}\label{sec:discrete}

In this section we formulate the discrete problem and state the main results of the analysis.

\subsection{Discrete unknowns}

Let an integer $k\ge 1$ be fixed and set
\begin{equation}\label{eq:l}
  l\coloneq\max(k-1,1).
\end{equation}
This choice for the polynomial degrees is motivated in Remark \ref{rem:k.l} below.
We define the following space of discrete velocity unknowns:
$$
\Uh\coloneq\left\{
\uvec{v}_h=((\VEC{v}_T)_{T\in\Th},(\VEC{v}_F)_{F\in\Fh})\st
\VEC{v}_T\in\Poly{l}(T)^d\quad\forall T\in\Th,\quad
\VEC{v}_F\in\Poly{k}(F)^d\quad\forall F\in\Fh
\right\}{.}
$$
For all $\uvec{v}_h\in\Uh$, $\VEC{v}_h$ (not underlined) denotes the function in $\Poly{l}(\Th)^d$ obtained patching element-based unknowns, that is to say,
\begin{equation}\label{eq:vh}
  (\VEC{v}_h)_{|T}\coloneq\VEC{v}_T\qquad\forall T\in\Th.
\end{equation}
The global interpolator $\Ih:H^1(\Omega)^d\to\Uh$ is such that, for all $\VEC{v}\in H^1(\Omega)^d$,
\begin{equation*}\label{eq:Ih}
  \Ih\VEC{v}\coloneq
  ((\vlproj[T]{l}\VEC{v}_{|T})_{T\in\Th},(\vlproj[F]{k}\VEC{v}_{|F})_{F\in\Fh}).
\end{equation*}
For any mesh element $T\in\Th$, the restrictions of $\Uh$ and $\uvec{v}_h\in\Uh$ to $T$ are respectively denoted by $\UT$ and $\uvec{v}_T=(\VEC{v}_T,(\VEC{v}_F)_{F\in\Fh[T]})$.
Similarly, the local interpolator $\IT:H^1(T)^d\to\UT$ is obtained restricting $\Ih$ to $T$, and is therefore such that, for all $\VEC{v}\in H^1(T)^d$,
\begin{equation}\label{eq:IT}
  \IT\VEC{v}\coloneq (\vlproj[T]{l}\VEC{v}, (\vlproj[F]{k}\VEC{v}_{|F})_{F\in\Fh[T]}).
\end{equation}
The spaces of discrete unknowns strongly accounting for the boundary condition \eqref{eq:bc} on the velocity and the zero-average constraint \eqref{eq:zero.mean.pressure} on the pressure are, respectively,
\begin{equation}\label{eq:UhD}
  \UhD\coloneq\left\{\uvec{v}_h\in\Uh\st\VEC{v}_F=\VEC{0}\quad\forall F\in\Fhb\right\},\qquad
  P_h^k\coloneq\Poly{k}(\Th)\cap P.
\end{equation}

\subsection{Stokes term}

Let an element $T\in\Th$ be fixed.
We define the local Stokes velocity reconstruction $\rST:\UT\to\Poly{k+1}(T)^d$ such that, for all $\uvec{v}_T\in\UT$,
\begin{subequations}\label{eq:rST}
  \begin{equation}\label{eq:rST:pde}
    (\GRADs\rST\uvec{v}_T,\GRADs\VEC{w})_T
    = -(\VEC{v}_T,\DIV\GRADs\VEC{w})_T + \sum_{F\in\Fh[T]}(\VEC{v}_F,\GRADs\VEC{w}\normal_{TF})_F
    \qquad\forall\VEC{w}\in\Poly{k+1}(T)^d.
  \end{equation}
  This equation defines $\rST\uvec{v}_T$ up to a rigid-body motion, which we prescribe by further imposing that
  \begin{equation}\label{eq:rST:closure}
    \int_T\rST\uvec{v}_T=\int_T\VEC{v}_T,\qquad
    \int_T\GRADss\rST\uvec{v}_T=\frac12\sum_{F\in\Fh[T]}\int_F\left(
    \VEC{v}_F\otimes\normal_{TF} - \normal_{TF}\otimes\VEC{v}_F
    \right),
  \end{equation}
\end{subequations}
where $\GRADss$ denotes the skew-symmetric part of the gradient operator and $\otimes$ the tensor product.
\begin{remark}[Link with the strain projector and approximation properties of the Stokes velocity reconstruction]
  Definition \eqref{eq:rST} can be justified observing that it holds, for all $\VEC{v}\in H^1(T)^d$,
  \begin{equation}\label{eq:rST.IT=elproj}
    \rST\IT\VEC{v} = \elproj{k+1}\VEC{v},
  \end{equation}
  where $\elproj{k+1}:H^1(T)^d\to\Poly{k+1}(T)^d$ is the strain projector defined by \eqref{eq:elproj} below, i.e., for all $\VEC{v}\in H^1(T)^d$, $\elproj{k+1}\VEC{v}$ is such that
  \begin{equation}\label{eq:elproj.k+1}
    \begin{gathered}
      (\GRADs\elproj{k+1}\VEC{v},\GRADs\VEC{w})_T
      = (\GRADs\VEC{v},\GRADs\VEC{w})_T\qquad\forall\VEC{w}\in\Poly{k+1}(T)^d,
      \\
      \int_T\elproj{k+1}\VEC{v}=\int_T\VEC{v},\qquad
      \int_T\GRADss\elproj{k+1}\VEC{v}=\int_T\GRADss\VEC{v}.
    \end{gathered}
  \end{equation}
  To prove \eqref{eq:rST.IT=elproj}, write \eqref{eq:rST} with $\IT\VEC{v}$ instead of $\uvec{v}_T$, use the definition \eqref{eq:lproj} of $\vlproj[T]{l}$ and $\vlproj[F]{k}$ to cancel these projectors from the right-hand sides of \eqref{eq:rST:pde} and \eqref{eq:rST:closure}, integrate by parts the right-hand sides of \eqref{eq:rST:pde} and of the second equation in \eqref{eq:rST:closure}, and compare the result with \eqref{eq:elproj.k+1}.
  In order to deduce from \eqref{eq:rST.IT=elproj} that, for any $T\in\Th$ with $\Th$ belonging to a regular mesh sequence, $\rST\IT\VEC{v}$ optimally approximates $\VEC{v}$ in $\Poly{k+1}(T)^d$, it suffices to apply Theorem \ref{thm:elproj:approx} and Corollary \ref{cor:elproj:approx.trace} below with $X=T$ and $\ell=k+1$ after observing that the multiplicative constants in \eqref{eq:elproj:approx} and \eqref{eq:elproj:approx.trace} do not depend on $h$ or $T$, but only on $\varrho$.
\end{remark}
The Stokes term is discretised by means of the bilinear form $\mathrm{a}_{\stokes,h}:\Uh\times\Uh\to\Real$ such that, for all $\uvec{w}_h,\uvec{v}_h\in\Uh$,
\begin{equation}\label{eq:aSh}
  \mathrm{a}_{\stokes,h}(\uvec{w}_h,\uvec{v}_h)
  \coloneq\sum_{T\in\Th}\mathrm{a}_{\stokes,T}(\uvec{w}_T,\uvec{v}_T)
\end{equation}
with local contribution such that
\begin{equation}\label{eq:aST}
  \mathrm{a}_{\stokes,T}(\uvec{w}_T,\uvec{v}_T)
  \coloneq 2\mu_T(\GRADs\rST\uvec{w}_T,\GRADs\rST\uvec{v}_T)_T
  + \mathrm{s}_{\stokes,T}(\uvec{w}_T,\uvec{v}_T).
\end{equation}
The first term in $\mathrm{a}_{\stokes,T}$ is the usual Galerkin contribution responsible for consistency, while the second is a stabilisation term satisfying the following assumption, which will be implicitly kept throughout the rest of the exposition.
\begin{assumption}[Stokes stabilisation bilinear form]\label{ass:sST}
  The Stokes stabilisation bilinear form $\mathrm{s}_{\stokes,T}:\UT\times\UT\to\Real$ enjoys the following properties:
  \begin{compactenum}[(S1)]
  \item \emph{Symmetry and positivity.} $\mathrm{s}_{\stokes,T}$ is symmetric and positive semidefinite;
  \item \emph{Stability and boundedness.} It holds, for all $\uvec{v}_T\in\UT$,
    \begin{equation}\label{eq:aST.stability}
      \mathrm{a}_{\stokes,T}(\uvec{v}_T,\uvec{v}_T)^{\frac12}
      \eqcolon\norm[\stokes,T]{\uvec{v}_T}\simeq (2\mu_T)^{\frac12}\norm[\strain,T]{\uvec{v}_T}
    \end{equation}
    with local discrete strain seminorm
    \begin{equation}\label{eq:norm.ET}
      \norm[\strain,T]{\uvec{v}_T}\coloneq\left(
      \norm[T]{\GRADs\VEC{v}_T}^2
      + \seminorm[1,\partial T]{\uvec{v}_T}^2
      \right)^{\frac12},\qquad
      \seminorm[1,\partial T]{\uvec{v}_T}
      \coloneq\left(
      \sum_{F\in\Fh[T]}\frac{1}{h_F}\norm[F]{\VEC{v}_F-\VEC{v}_T}^2
      \right)^{\frac12};
    \end{equation}
  \item \emph{Polynomial consistency.} For all $\VEC{w}\in\Poly{k+1}(T)^d$ and all $\uvec{v}_T\in\UT$, it holds that
    \begin{equation*}\label{eq:sST:poly.cons}
      \mathrm{s}_{\stokes,T}(\IT\VEC{w},\uvec{v}_T) = 0.
    \end{equation*}
  \end{compactenum}
\end{assumption}
\begin{remark}[Global stability and boundedness for the Stokes bilinear form]
  Raising \eqref{eq:aST.stability} to the power $2$, summing over $T\in\Th$, accounting for \eqref{eq:mu.nu:bounds}, and passing to the square root, we infer the following global uniform seminorm equivalence valid for all $\uvec{v}_h\in\Uh$:
  \begin{equation}\label{eq:aSh.stability}
    (2\underline{\mu})^{\frac12}\norm[\strain,h]{\uvec{v}_h}
    \lesssim\norm[\stokes,h]{\uvec{v}_h}
    \lesssim(2\overline{\mu})^{\frac12}\norm[\strain,h]{\uvec{v}_h}
  \end{equation}
  with
  \begin{equation}\label{eq:norm.S.strain.h}
    \norm[\stokes,h]{\uvec{v}_h}\coloneq\mathrm{a}_{\stokes,h}(\uvec{v}_h,\uvec{v}_h)^{\frac12}
    \mbox{ and }
    \norm[\strain,h]{\uvec{v}_h}\coloneq\left(\sum_{T\in\Th}\norm[\strain,T]{\uvec{v}_T}^2\right)^{\frac12}.
  \end{equation}
  Adapting the reasoning of \cite[Proposition 5]{Di-Pietro.Ern:15}, one can prove that the map $\norm[\strain,h]{{\cdot}}$ defines a norm on the space $\UhD$ with strongly enforced boundary conditions.
\end{remark}
The following stabilisation bilinear form, classical in HHO methods, fulfils Assumption \ref{ass:sST}:
\begin{equation}\label{eq:sST}
  \mathrm{s}_{\stokes,T}(\uvec{w}_T,\uvec{v}_T)_T
  \coloneq \sum_{F\in\Fh[T]}\frac{2\mu_T}{h_F} ((\dTF-\dT)\uvec{w}_T,(\dTF-\dT)\uvec{v}_T)_F.
\end{equation}
Here, the Stokes difference operators $\dT:\UT\to\Poly{l}(T)^d$ and, for all $F\in\Fh[T]$, $\dTF:\UT\to\Poly{k}(F)^d$ are such that, for all $\uvec{v}_T\in\UT$,
$$
\dT\uvec{v}_T\coloneq\vlproj[T]{l}(\rST\uvec{v}_T-\VEC{v}_T),\qquad
\dTF\uvec{v}_T\coloneq\vlproj[F]{k}(\rST\uvec{v}_T-\VEC{v}_F)\quad\forall F\in\Fh[T].
$$
\begin{remark}[Choice of the polynomial degrees for the discrete velocity unknowns]\label{rem:k.l}
  The assumption $k\ge 1$ and the choice \eqref{eq:l} of the degree for element-based discrete unknowns (which implies, in particular, $l=1$ when $k=1$) are required to prove condition (S2) for the bilinear form defined by \eqref{eq:sST}.
  The key point is to ensure that rigid-body motions and their traces are captured by element and face unknowns, respectively.
  For further insight into this point, we refer the reader to \cite[Lemma 4]{Di-Pietro.Ern:15}, where \eqref{eq:aST.stability} is proved for a variation of the stabilisation bilinear form \eqref{eq:sST} corresponding to the case $l=k$.
  Stability for $k=0$ could be recovered by penalising the jumps of the Stokes velocity reconstruction similarly to \cite{Burman.Hansbo:05}. %
  This modification would, however, introduce additional links among element-based velocity unknowns, so that the static condensation strategy discussed in Remark \ref{rem:static.cond} below would no longer be an interesting option. %
  Further details on this point are postponed to a future work.

  In passing, we notice that, when $\mu$ is constant, problem \eqref{eq:strong} can be simplified replacing the term $-\DIV(2\mu\GRADs\VEC{u})$ by $-\mu\LAPL\VEC{u}$.
  In this case, the discretisation of the Stokes term can go along the lines of \cite{Di-Pietro.Ern.ea:14,Cockburn.Di-Pietro.ea:16} with $k\ge 0$ and $l=\max(0,k-1)$.
\end{remark}

\subsection{Darcy term}

Let an element $T\in\Th$ be fixed, and denote by $\RTN(T)\coloneq\Poly{k}(T)^d+\VEC{x}\Poly{k}(T)$ the Raviart--Thomas--N\'ed\'elec space of degree $k$ on $T$.
We define the local Darcy velocity reconstruction $\rDT:\UT\to\RTN(T)$ such that, for all $\uvec{v}_T\in\UT$,
\begin{subequations}\label{eq:rDT}
  \begin{alignat}{3}\label{eq:rDT:T}
    && (\rDT\uvec{v}_T,\VEC{w})_T &= (\VEC{v}_T,\VEC{w})_T &\qquad& \forall \VEC{w}\in\Poly{k-1}(T)^d
    \\ \label{eq:rDT:F}
    &&
    (\rDT\uvec{v}_T\SCAL\normal_{TF},q)_F &= (\VEC{v}_F\SCAL\normal_{TF},q)_F &\qquad&\forall F\in\Fh[T]\,,\; \forall q\in\Poly{k}(F).
  \end{alignat}
\end{subequations}
Classically, the relations \eqref{eq:rDT} identify $\rDT\uvec{v}_T$ uniquely; see, e.g., \cite[Proposition 2.3.4]{Boffi.Brezzi.ea:13}.
For further use, we also define the global Darcy velocity reconstruction $\rDh:\Uh\to\RTN(\Th)$ with $\RTN(\Th)\coloneq\{\VEC{v}\in\Hdiv\st\VEC{v}_{|T}\in\RTN(T)\text{ for all }T\in\Th\}$ such that, for all $\uvec{v}_h\in\Uh$,
$$
(\rDh\uvec{v}_h)_{|T}\coloneq\rDT\uvec{v}_T\qquad\forall T\in\Th.
$$
Some remarks are in order.
\begin{remark}[Reformulation of \eqref{eq:rDT}]\label{rem:rDT}
  Conditions \eqref{eq:rDT:T} and \eqref{eq:rDT:F} are respectively equivalent to
  \begin{equation}\label{eq:rDT.bis}
    \text{$\vlproj[T]{k-1}(\rDT\uvec{v}_T)=\vlproj[T]{k-1}\VEC{v}_T$ and $(\rDT\uvec{v}_T)_{|F}\SCAL\normal_{TF}=\VEC{v}_F\SCAL\normal_{TF}$ for all $F\in\Fh[T]$.}
  \end{equation}
  In particular, accounting for \eqref{eq:l} and using the idempotency of $\vlproj[T]{k-1}$, the former condition implies $\vlproj[T]{l}(\rDT\uvec{v}_T)=\vlproj[T]{k-1}(\rDT\uvec{v}_T)=\VEC{v}_T$ when $k\ge 2$.
\end{remark}
\begin{remark}[Link with the Raviart--Thomas--N\'ed\'elec interpolator]
  A direct verification shows that, for all $T\in\Th$, the local Darcy velocity reconstruction composed with the local interpolator \eqref{eq:IT} gives the Raviart--Thomas--N\'ed\'elec interpolator, i.e., for all $\VEC{v}\in H^1(T)^d$
  \begin{equation}\label{eq:rDh.Ih=IRTN}
    \rDT\IT \VEC{v} = \IRTNT\VEC{v},
  \end{equation}
  where $\IRTNT:H^1(T)^d\to\RTN(T)$ is such that, for all $\VEC{v}\in H^1(T)^d$,
  \begin{subequations}\label{eq:IRTNT}
    \begin{alignat}{3}\label{eq:IRTNT:T}
      &&(\IRTNT\VEC{v},\VEC{w})_T &= (\VEC{v},\VEC{w})_T &\qquad&\forall\VEC{w}\in\Poly{k-1}(T)^d,
      \\ \label{eq:IRNTN:F}
      &&
      (\IRTNT\VEC{v}\SCAL\normal_{TF},q)_F &= (\VEC{v}\SCAL\normal_{TF},q)_F &\qquad&\forall F\in\Fh[T] \,,\;\forall q\in\Poly{k}(F).
    \end{alignat}
  \end{subequations}
\end{remark}
The Darcy term is discretised by means of the bilinear form $\mathrm{a}_{\darcy,h}:\Uh\times\Uh\to\Real$ such that, for all $\uvec{w}_h,\uvec{v}_h\in\Uh$,
\begin{equation}\label{eq:aDh}
  \mathrm{a}_{\darcy,h}(\uvec{w}_h,\uvec{v}_h)
  \coloneq\sum_{T\in\Th}\mathrm{a}_{\darcy,T}(\uvec{w}_T,\uvec{v}_T)
\end{equation}
with local contribution
$$
\mathrm{a}_{\darcy,T}(\uvec{w}_T,\uvec{v}_T)
\coloneq\nu_T(\rDT\uvec{w}_T,\rDT\uvec{v}_T)_T
+ \mathrm{s}_{\darcy,T}(\uvec{w}_T,\uvec{v}_T).
$$
Once again, the first term in the right-hand side of the above expression is responsible for consistency, while the second is the following stabilisation bilinear form, which plays a crucial role in the Darcy limit (see also Remark \ref{rem:darcy} on this subject):
\begin{equation}\label{eq:sDT}
  \mathrm{s}_{\darcy,T}(\uvec{w}_T,\uvec{v}_T)
  \coloneq \nu_T(\dDT\uvec{w}_T,\dDT\uvec{v}_T)_T
  + \sum_{F\in\Fhi[T]}\nu_T h_F(\dDTF\uvec{w}_T,\dDTF\uvec{v}_T)_F,
\end{equation}
with Darcy difference operators $\dDT:\UT\to\Poly{l}(T)^d$ and, for all $F\in\Fh[T]$, $\dDTF:\UT\to\Poly{k}(F)^d$ such that, for all $\uvec{v}_T\in\UT$,
\begin{equation}\label{eq:dDT.dDTF}
  \dDT\uvec{v}_T\coloneq\vlproj[T]{l}(\rDT\uvec{v}_T-\VEC{v}_T),\qquad
  \dDTF\uvec{v}_T\coloneq\vlproj[F]{k}(\rDT\uvec{v}_T-\VEC{v}_F)\quad\forall F\in\Fh[T].
\end{equation}
Recalling the characterisation \eqref{eq:rDT.bis} of the local Darcy velocity together with the definition \eqref{eq:dDT.dDTF} of the Darcy difference operator $\dDT$, it holds that
\begin{equation}\label{eq:dDT=0.k>=2}
  \dDT\uvec{v}_T=\VEC{0}\mbox{ if }k\ge 2.
\end{equation}
The role of the stabilisation term is illustrated by the following proposition.
\begin{proposition}[Darcy norm]\label{prop:norm.Dh}
  The function that maps every $\uvec{v}_h\in\Uh$ on
  \begin{equation}\label{eq:aDh.coercivity}
    \norm[\darcy,h]{\uvec{v}_h}\coloneq\left(\sum_{T\in\Th}\norm[\darcy,T]{\uvec{v}_T}^2\right)^{\frac12}
    \mbox{ where, for all $T\in\Th$, }
    \norm[\darcy,T]{\uvec{v}_T}\coloneq\mathrm{a}_{\darcy,T}(\uvec{v}_T,\uvec{v}_T)^{\frac12}
  \end{equation}
  is a norm on $\UhD$.
\end{proposition}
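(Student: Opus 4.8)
The plan is to decompose the statement into the (essentially free) seminorm property of $\norm[\darcy,h]{\cdot}$ and the (substantive) positive definiteness on $\UhD$, the latter being where the stabilisation bilinear form $\mathrm{s}_{\darcy,T}$ comes into play. First I would note that $\mathrm{a}_{\darcy,h}$, read off from \eqref{eq:aDh}, \eqref{eq:sDT} and \eqref{eq:dDT.dDTF}, is symmetric, with each local contribution
\[
\mathrm{a}_{\darcy,T}(\uvec{v}_T,\uvec{v}_T)=\nu_T\norm[T]{\rDT\uvec{v}_T}^2+\nu_T\norm[T]{\dDT\uvec{v}_T}^2+\sum_{F\in\Fhi[T]}\nu_T h_F\norm[F]{\dDTF\uvec{v}_T}^2
\]
a sum of nonnegative terms (recall $\nu_T\ge 0$ and $h_F>0$); hence $\mathrm{a}_{\darcy,h}$ is positive semidefinite, so $\norm[\darcy,h]{\cdot}=\mathrm{a}_{\darcy,h}(\cdot,\cdot)^{1/2}$ is absolutely homogeneous and, by the Cauchy--Schwarz inequality for symmetric positive semidefinite bilinear forms, satisfies the triangle inequality. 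It is therefore already a seminorm on $\Uh$, hence on $\UhD$, and all that remains is to show that $\norm[\darcy,h]{\uvec{v}_h}=0$ with $\uvec{v}_h\in\UhD$ forces $\uvec{v}_h=\uvec{0}$; here I would use $\underline{\nu}>0$, which is indeed necessary, since if $\nu$ vanished on an element the map would be a genuine (non-definite) seminorm.

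For the definiteness, suppose $\uvec{v}_h\in\UhD$ satisfies $\norm[\darcy,h]{\uvec{v}_h}=0$. Since $\nu_T\ge\underline{\nu}>0$ and the three summands displayed above are nonnegative, each vanishes, giving $\rDT\uvec{v}_T=\VEC{0}$, $\dDT\uvec{v}_T=\VEC{0}$, and $\dDTF\uvec{v}_T=\VEC{0}$ for all $F\in\Fhi[T]$ and all $T\in\Th$. Using $\VEC{v}_T\in\Poly{l}(T)^d$ in the definition \eqref{eq:dDT.dDTF} of $\dDT$ yields $\dDT\uvec{v}_T=\vlproj[T]{l}(\rDT\uvec{v}_T)-\VEC{v}_T=-\VEC{v}_T$, so $\VEC{v}_T=\VEC{0}$ for every $T\in\Th$; likewise, for any internal face $F$, choosing $T$ with $F\in\Fhi[T]$ and using $\VEC{v}_F\in\Poly{k}(F)^d$ gives $\dDTF\uvec{v}_T=\vlproj[F]{k}(\rDT\uvec{v}_T)-\VEC{v}_F=-\VEC{v}_F$, so $\VEC{v}_F=\VEC{0}$ for every $F\in\Fhi$. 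Finally $\VEC{v}_F=\VEC{0}$ for every $F\in\Fhb$ by the very definition \eqref{eq:UhD} of $\UhD$. All components of $\uvec{v}_h$ thus vanish, i.e.\ $\uvec{v}_h=\uvec{0}$.

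I do not anticipate any real obstacle: this is a routine definiteness argument. The point that deserves attention is that the consistency term $\nu_T\norm[T]{\rDT\uvec{v}_T}^2$ is by itself insufficient to control $\uvec{v}_T$, because by \eqref{eq:rDT.bis} the condition $\rDT\uvec{v}_T=\VEC{0}$ only pins down $\vlproj[T]{k-1}\VEC{v}_T$ and the normal traces $\VEC{v}_F\SCAL\normal_{TF}$, leaving the higher-order part of $\VEC{v}_T$ (all of $\VEC{v}_T$ when $k=1$) and the tangential components of the $\VEC{v}_F$ undetermined; it is exactly these degrees of freedom that are recovered by the stabilisation operators $\dDT$ and $\dDTF$ in \eqref{eq:sDT} --- the concrete reason behind the ``crucial role in the Darcy limit'' mentioned before the statement. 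For $k\ge 2$ one may alternatively conclude $\VEC{v}_T=\VEC{0}$ directly from $\rDT\uvec{v}_T=\VEC{0}$ via Remark \ref{rem:rDT}, since then $\VEC{v}_T=\vlproj[T]{l}(\rDT\uvec{v}_T)$.
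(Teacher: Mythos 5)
Your proof is correct and follows essentially the same route as the paper's: dismiss the seminorm property as routine, then show that vanishing of the three nonnegative summands forces $\rDT\uvec{v}_T=\VEC{0}$, hence $\VEC{v}_T=\VEC{0}$ via $\dDT$ and $\VEC{v}_F=\VEC{0}$ on interfaces via $\dDTF$, with boundary faces handled by the definition of $\UhD$. Your explicit observation that $\nu_T>0$ on every element is needed (and that the consistency term alone cannot control the tangential face unknowns or, for $k=1$, the linear part of $\VEC{v}_T$) is accurate and is left implicit in the paper, which only records it in Remark \ref{rem:darcy}.
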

\begin{proof}
  The seminorm property being evident, it suffices to prove that, for all $\uvec{v}_h\in\UhD$, $\norm[\darcy,h]{\uvec{v}_h}=0$ implies $\uvec{v}_h=\uvec{0}$.
  Let $\uvec{v}_h\in\UhD$ be such that $\norm[\darcy,h]{\uvec{v}_h}=0$.
  Then, we have that
  \begin{alignat}{3}
    \forall T\in\Th&\qquad&\rDT\uvec{v}_T&=\VEC{0},\label{eq:rDT=0}
    \\
    \forall T\in\Th&\qquad&\dDT\uvec{v}_T&=\VEC{0},\label{eq:dDT=0}
    \\
    \forall T\in\Th&\qquad&\dDTF\uvec{v}_F&=\VEC{0} &\qquad&\forall F\in\Fhi[T].\label{eq:dDTF=0}
  \end{alignat}
  Plugging condition \eqref{eq:rDT=0} into \eqref{eq:dDT=0} and \eqref{eq:dDTF=0} we infer, respectively, that $\vlproj[T]{l}\VEC{v}_T=\VEC{v}_T=\VEC{0}$ for all $T\in\Th$ and $\vlproj[F]{k}\VEC{v}_F=\VEC{v}_F=\VEC{0}$ for all $F\in\Fhi$.
  On the other hand, by definition \eqref{eq:UhD} of $\UhD$, $\VEC{v}_F=\VEC{0}$ for all $F\in\Fhb$, which concludes the proof.
\end{proof}

\subsection{Velocity--pressure coupling}

The velocity--pressure coupling is realised by the bilinear form $\mathrm{b}_h:\Uh\times\Poly{k}(\Th)$ such that, for all $(\uvec{w}_h,q_h)\in\Uh\times\Poly{k}(\Th)$,
\begin{equation}\label{eq:bh}
  \mathrm{b}_h(\uvec{w}_h,q_h)
  \coloneq\sum_{T\in\Th}\left(
  (\VEC{w}_T,\GRAD q_T)_T - \sum_{F\in\Fh[T]}(\VEC{w}_F,q_T\normal_{TF})_F
  \right),
\end{equation}
where, for all $T\in\Th$, we have let, for the sake of brevity, $q_T\coloneq q_{h|T}$.
This choice is motivated by the following property.
\begin{proposition}[Consistency of the velocity--pressure coupling bilinear form]
  For all $\VEC{w}\in H^1(\Omega)^d$ and all $q_h\in\Poly{k}(\Th)$, it holds that
  \begin{equation}\label{eq:bh.consistency}
    \mathrm{b}_h(\Ih\VEC{w},q_h)=-(\DIV\VEC{w},q_h).
  \end{equation}
\end{proposition}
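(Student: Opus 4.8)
The statement to prove is the consistency identity
\begin{equation*}
  \mathrm{b}_h(\Ih\VEC{w},q_h)=-(\DIV\VEC{w},q_h)
  \qquad\text{for all }\VEC{w}\in H^1(\Omega)^d,\ q_h\in\Poly{k}(\Th).
\end{equation*}
The natural strategy is to work element by element, unfold the definition \eqref{eq:bh} of $\mathrm{b}_h$ with $\uvec{w}_h=\Ih\VEC{w}$, replace the element and face components of $\Ih\VEC{w}$ by $\vlproj[T]{l}\VEC{w}_{|T}$ and $\vlproj[F]{k}\VEC{w}_{|F}$, and then \emph{remove} these $L^2$-projectors by exploiting their defining orthogonality property \eqref{eq:lproj}. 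Indeed, in the volume term $(\vlproj[T]{l}\VEC{w},\GRAD q_T)_T$ the test function $\GRAD q_T$ belongs to $\Poly{k-1}(T)^d\subseteq\Poly{l}(T)^d$ (using \eqref{eq:l}, so that $l\ge k-1$), hence the projector can be dropped and the term equals $(\VEC{w},\GRAD q_T)_T$. Similarly, on each face $F\in\Fh[T]$ the scalar $q_T\normal_{TF}$ has components in $\Poly{k}(F)$, so $(\vlproj[F]{k}\VEC{w}_{|F},q_T\normal_{TF})_F=(\VEC{w}_{|F},q_T\normal_{TF})_F$.

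After these reductions, I would arrive at
\begin{equation*}
  \mathrm{b}_h(\Ih\VEC{w},q_h)
  =\sum_{T\in\Th}\left((\VEC{w},\GRAD q_T)_T-\sum_{F\in\Fh[T]}(\VEC{w}_{|F},q_T\normal_{TF})_F\right),
\end{equation*}
and the key step is to recognise the bracketed quantity as an elementwise integration by parts: since $\VEC{w}_{|T}\in H^1(T)^d$ and $q_T\in\Poly{k}(T)\subset H^1(T)$, the divergence formula gives $(\VEC{w},\GRAD q_T)_T=-(\DIV\VEC{w},q_T)_T+\sum_{F\in\Fh[T]}(\VEC{w}_{|T}\SCAL\normal_{TF},q_T)_F$, so the bracket collapses to $-(\DIV\VEC{w},q_T)_T$. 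Here one should note that $\DIV\VEC{w}\in L^2(T)$ because $\VEC{w}\in H^1(T)^d$, so all integrals make sense. Summing over $T\in\Th$ then yields $-\sum_{T\in\Th}(\DIV\VEC{w},q_T)_T=-(\DIV\VEC{w},q_h)$, which is the claim.

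There is no real obstacle here; the only point requiring a modicum of care is the very first reduction in the volume term, where one must verify that $\GRAD q_T$ indeed lands in the polynomial space seen by the element projector. With the choice $l=\max(k-1,1)$ this holds since $\deg(\GRAD q_T)\le k-1\le l$; were $l$ chosen smaller this identity would fail, which is one of the reasons behind the definition \eqref{eq:l}. (One could alternatively phrase the same computation by integrating by parts \emph{before} removing the projectors, but then one has to carry the projectors through two terms instead of one, so the order above is slightly cleaner.) The face contributions across interior faces do not need to be matched or cancelled in pairs: the identity is purely local in $T$, and the global sum is obtained simply by adding the local identities. Finally, since $q_h$ is only required to lie in $\Poly{k}(\Th)$ — not in $P_h^k$ — no zero-mean argument is needed.
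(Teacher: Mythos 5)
Your argument is correct and coincides with the paper's own proof: both unfold the definition of $\mathrm{b}_h$ at $\Ih\VEC{w}$, remove the projectors $\vlproj[T]{l}$ and $\vlproj[F]{k}$ using \eqref{eq:lproj} together with the observations $\GRAD q_T\in\Poly{k-1}(T)^d\subset\Poly{l}(T)^d$ and $q_{T|F}\normal_{TF}\in\Poly{k}(F)^d$, and conclude by an element-by-element integration by parts. Your additional remarks on the role of the choice \eqref{eq:l} and on the locality of the identity are accurate but not needed beyond what the paper records.
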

\begin{proof}
  Writing \eqref{eq:bh} for $\uvec{w}_h=\Ih\VEC{w}$, we obtain
  $$
  \begin{aligned}
    \mathrm{b}_h(\Ih\VEC{w},q_h)
    &= \sum_{T\in\Th}\left(
    (\vlproj[T]{l}\VEC{w},\GRAD q_T)_T - \sum_{F\in\Fh[T]}(\vlproj[F]{k}\VEC{w},q_T\normal_{TF})_F
    \right)
    \\
    &= \sum_{T\in\Th}\left(
    (\VEC{w},\GRAD q_T)_T - \sum_{F\in\Fh[T]}(\VEC{w},q_T\normal_{TF})_F
    \right)
    =-(\DIV\VEC{w},q_h),
  \end{aligned}
  $$
  where we have used the fact that, for all $T\in\Th$, $\GRAD q_T\in\Poly{k-1}(T)^d\subset\Poly{l}(T)^d$ (see \eqref{eq:l}) and, for all $F\in\Fh[T]$, $q_{T|F}\normal_{TF}\in\Poly{k}(F)^d$ together with \eqref{eq:lproj} to remove the projectors in the second line, and an element by element integration by parts to conclude.
\end{proof}%
The following proposition establishes a link between the divergence of the Darcy velocity reconstruction and the bilinear form $\mathrm{b}_h$.
As we will see in Remark \ref{rem:darcy}, this property plays a key role when extending the method to the Darcy problem.
\begin{proposition}[Link with the divergence of the Darcy velocity reconstruction]\label{prop:bh.div.rDh}
  For all $\uvec{v}_h\in\Uh$ and all $q_h\in\Poly{k}(\Th)$, it holds that
  \begin{equation}\label{eq:bh.div.rDh}
    \mathrm{b}_h(\uvec{v}_h,q_h) = -(\DIV\rDh\uvec{v}_h,q_h).
  \end{equation}
\end{proposition}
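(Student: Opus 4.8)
The plan is to prove the identity element by element, exploiting the defining relations \eqref{eq:rDT} of the local Darcy velocity reconstruction together with a single integration by parts on each $T$. Fix $q_h\in\Poly{k}(\Th)$ and $\uvec{v}_h\in\Uh$, and write $q_T\coloneq q_{h|T}$. The starting point is the expression \eqref{eq:bh} for $\mathrm{b}_h(\uvec{v}_h,q_h)$, which is a sum over $T\in\Th$ of a volumetric term $(\VEC{v}_T,\GRAD q_T)_T$ and face terms $-(\VEC{v}_F,q_T\normal_{TF})_F$. The idea is to replace $\VEC{v}_T$ and $\VEC{v}_F$ by $\rDT\uvec{v}_T$ in these inner products: this is legitimate because $\GRAD q_T\in\Poly{k-1}(T)^d$, so by \eqref{eq:rDT:T} (equivalently, the first relation in \eqref{eq:rDT.bis}) we have $(\VEC{v}_T,\GRAD q_T)_T=(\rDT\uvec{v}_T,\GRAD q_T)_T$; and $q_{T|F}\in\Poly{k}(F)$, so by \eqref{eq:rDT:F} (equivalently, the second relation in \eqref{eq:rDT.bis}) we have $(\VEC{v}_F,q_T\normal_{TF})_F=(\VEC{v}_F\SCAL\normal_{TF},q_T)_F=(\rDT\uvec{v}_T\SCAL\normal_{TF},q_T)_F$.

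After this substitution, the local contribution reads $(\rDT\uvec{v}_T,\GRAD q_T)_T-\sum_{F\in\Fh[T]}(\rDT\uvec{v}_T\SCAL\normal_{TF},q_T)_F$. Since $\rDT\uvec{v}_T\in\RTN(T)\subset\VEC H(\mathrm{div};T)$ and $q_T\in H^1(T)$, an integration by parts on $T$ gives $(\rDT\uvec{v}_T,\GRAD q_T)_T=-(\DIV\rDT\uvec{v}_T,q_T)_T+\sum_{F\in\Fh[T]}(\rDT\uvec{v}_T\SCAL\normal_{TF},q_T)_F$, so the boundary terms cancel and the local contribution equals $-(\DIV\rDT\uvec{v}_T,q_T)_T$. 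Summing over $T\in\Th$ and using $(\rDh\uvec{v}_h)_{|T}=\rDT\uvec{v}_T$ yields $\mathrm{b}_h(\uvec{v}_h,q_h)=-\sum_{T\in\Th}(\DIV\rDT\uvec{v}_T,q_T)_T=-(\DIV\rDh\uvec{v}_h,q_h)$, which is exactly \eqref{eq:bh.div.rDh}.

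There is no serious obstacle here; the proof is a short computation. The only point requiring a modicum of care is checking that the polynomial degrees match so that the defining conditions \eqref{eq:rDT} can actually be invoked — i.e., that $\GRAD q_T$ has degree at most $k-1$ and that $q_{T|F}$ has degree at most $k$ — which is immediate from $q_T\in\Poly{k}(T)$. One should also note that no regularity beyond $H(\mathrm{div};T)$ is needed for $\rDT\uvec{v}_T$, so the integration by parts is valid with traces understood in the usual $H^{-1/2}(\partial T)$–$H^{1/2}(\partial T)$ duality (or simply pointwise, since everything in sight is polynomial).
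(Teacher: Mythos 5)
Your proof is correct and is essentially the paper's own argument run in the opposite direction: the paper starts from $-(\DIV\rDh\uvec{v}_h,q_h)$, integrates by parts element by element, and then invokes \eqref{eq:rDT:T}--\eqref{eq:rDT:F} using the same degree observations ($\GRAD q_T\in\Poly{k-1}(T)^d$, $q_{T|F}\in\Poly{k}(F)$) to land on $\mathrm{b}_h(\uvec{v}_h,q_h)$. No substantive difference.
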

\begin{proof}
  We have that
  $$
  \begin{aligned}
    -(\DIV\rDh\uvec{v}_h,q_h)
    &= \sum_{T\in\Th}\left(
    (\rDT\uvec{v}_T,\GRAD q_T)_T - \sum_{F\in\Fh[T]}(\rDT\uvec{v}_T\SCAL\normal_{TF},q_T)_F
    \right)
    \\
    &= \sum_{T\in\Th}\left(
    (\VEC{v}_T,\GRAD q_T)_T - \sum_{F\in\Fh[T]}(\VEC{v}_F\SCAL\normal_{TF},q_T)_F
    \right)
    = \mathrm{b}_h(\uvec{v}_h,q_h),
  \end{aligned}
  $$
  where we have used an element by element integration by parts in the first line,
  the definition \eqref{eq:rDT} of the Darcy velocity in the second line after observing that, for any $T\in\Th$, $\GRAD q_T\in\Poly{k-1}(T)^d$ and $q_{T|F}\in\Poly{k}(F)$ for all $F\in\Fh[T]$,
  and recalled the definition \eqref{eq:bh} of $\mathrm{b}_h$ to conclude.
\end{proof}%

\subsection{Discrete problem and main results}

We define the global bilinear form $\mathrm{a}_h:\Uh\times\Uh\to\Real$ such that 
$$
\mathrm{a}_h\coloneq \mathrm{a}_{\stokes,h} + \mathrm{a}_{\darcy,h}
$$
with bilinear forms in the right-hand side respectively defined by \eqref{eq:aSh} and \eqref{eq:aDh}.
The discrete problem reads: Find $(\uvec{u}_h,p_h)\in\UhD\times P_h^k$ such that
\begin{subequations}\label{eq:discrete}
  \begin{alignat}{2}
    \label{eq:discrete:momentum}
    \mathrm{a}_h(\uvec{u}_h,\uvec{v}_h) + \mathrm{b}_h(\uvec{v}_h,p_h)
    &= (\VEC{f},\rDh\uvec{v}_h) &\qquad&\forall\uvec{v}_h\in\UhD,
    \\ \label{eq:discrete:mass}
    -\mathrm{b}_h(\uvec{u}_h,q_h) &= (g,q_h) &\qquad&\forall q_h\in P_h^k.
  \end{alignat}
\end{subequations}
\begin{remark}[Static condensation]\label{rem:static.cond}
  The size of the linear system corresponding to the discrete problem~\eqref{eq:discrete} can be significantly reduced by resorting to static condensation.
  Following the procedure hinted to in~\cite{Aghili.Boyaval.ea:15} and detailed in~\cite[Section~6.2]{Di-Pietro.Ern.ea:16}, it can be shown that the only globally coupled variables are the face unknowns for the velocity and the mean value of the pressure inside each mesh element.
  Hence, after statically condensing the other discrete unknowns, the size of the linear system matrix is
  \begin{equation}\label{eq:Ndof}
    N_{\rm dof} \coloneq d{k+d-1\choose k} \card{\Fhi} + \card{\Th}.
  \end{equation}
\end{remark}
We start by studying the well-posedness of problem \eqref{eq:discrete}.
We equip henceforth $\UhD$ with the norm such that, for all $\uvec{v}_h\in\UhD$,
\begin{equation}\label{eq:norm.Uh}
  \norm[\VEC{U},h]{\uvec{v}_h}\coloneq\left(
  \sum_{T\in\Th}\norm[\VEC{U},T]{\uvec{v}_T}^2
  \right)^{\frac12}\mbox{ where, for all $T\in\Th$, }
  \norm[\VEC{U},T]{\uvec{v}_T}\coloneq\left(
  \norm[\stokes,T]{\uvec{v}_T}^2 + \norm[\darcy,T]{\uvec{v}_T}^2
  \right)^{\frac12}
\end{equation}
with local Stokes and Darcy (semi)norms respectively defined by \eqref{eq:aST.stability} and \eqref{eq:aDh.coercivity}.
Given a linear functional $\mathfrak{f}$ on $\UhD$, its dual  norm is classically given by
\begin{equation}\label{eq:norm.U*h}
  \norm[\VEC{U}^*,h]{\mathfrak{f}}
  \coloneq\sup_{\uvec{v}_h\in\UhD\setminus\{\uvec{0}\}}
  \frac{\left|\langle\mathfrak{f},\uvec{v}_h\rangle\right|}{\norm[\VEC{U},h]{\uvec{v}_h}}.
\end{equation}
\begin{theorem}[Well-posedness]\label{thm:well-posedness}
  Problem \eqref{eq:discrete} is well-posed with a priori bound:
  \begin{equation}\label{eq:a-priori}
    \norm[\VEC{U},h]{\uvec{u}_h}
    + \beta\norm{p_h}
    \lesssim(2\underline{\mu})^{-\frac12}\norm{\VEC{f}}
    + \beta^{-1}\norm{g}
    \mbox{ with }\beta\coloneq\left( 2\overline{\mu} + \overline{\nu} \right)^{-\frac12}.
  \end{equation}
\end{theorem}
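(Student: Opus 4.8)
The plan is to recognise \eqref{eq:discrete} as a symmetric saddle-point problem and to invoke the classical well-posedness and stability theory for such problems (see, e.g., \cite{Boffi.Brezzi.ea:13}). This reduces the matter to three ingredients: coercivity and boundedness of $\mathrm{a}_h$ on $\UhD$ with respect to $\norm[\VEC{U},h]{{\cdot}}$; an inf--sup condition for $\mathrm{b}_h$ on $\UhD\times P_h^k$ with constant $\gtrsim\beta$; and upper bounds, in the appropriate dual norms, on the two right-hand side linear functionals $\uvec{v}_h\mapsto(\VEC{f},\rDh\uvec{v}_h)$ and $q_h\mapsto(g,q_h)$. Coercivity and boundedness of $\mathrm{a}_h$ are immediate: by \eqref{eq:aST.stability}, \eqref{eq:aDh.coercivity} and the definition \eqref{eq:norm.Uh} one has $\mathrm{a}_h(\uvec{v}_h,\uvec{v}_h)=\norm[\VEC{U},h]{\uvec{v}_h}^2$ for all $\uvec{v}_h\in\Uh$, and since $\mathrm{a}_h$ is symmetric and positive semidefinite, Cauchy--Schwarz gives $|\mathrm{a}_h(\uvec{w}_h,\uvec{v}_h)|\le\norm[\VEC{U},h]{\uvec{w}_h}\norm[\VEC{U},h]{\uvec{v}_h}$; thus both constants equal $1$ (and $\mathrm{a}_h$ is in fact coercive on all of $\UhD$, which streamlines the abstract argument).

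Next I would bound the right-hand sides. The functional $q_h\mapsto(g,q_h)$ clearly has dual norm $\le\norm{g}$ on $(P_h^k,\norm{{\cdot}})$. For $\uvec{v}_h\mapsto(\VEC{f},\rDh\uvec{v}_h)$, I would first prove, using unisolvence of the Raviart--Thomas--N\'ed\'elec degrees of freedom on the finite-dimensional space $\RTN(T)$ together with a scaling to the reference simplex, the local bound $\norm[T]{\rDT\uvec{v}_T}\lesssim\norm[T]{\VEC{v}_T}+\seminorm[1,\partial T]{\uvec{v}_T}$; summing over $T\in\Th$ and using a uniform discrete Korn inequality on $\UhD$ (the quantitative counterpart of the fact, recalled after \eqref{eq:aSh.stability}, that $\norm[\strain,h]{{\cdot}}$ is a norm on $\UhD$; see \cite{Di-Pietro.Ern:15}) then gives $\norm{\rDh\uvec{v}_h}\lesssim\norm[\strain,h]{\uvec{v}_h}$. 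The left inequality in \eqref{eq:aSh.stability} yields $\norm{\rDh\uvec{v}_h}\lesssim(2\underline{\mu})^{-\frac12}\norm[\stokes,h]{\uvec{v}_h}\le(2\underline{\mu})^{-\frac12}\norm[\VEC{U},h]{\uvec{v}_h}$, so by Cauchy--Schwarz the quantity $\term$, defined as the dual norm \eqref{eq:norm.U*h} of $\uvec{v}_h\mapsto(\VEC{f},\rDh\uvec{v}_h)$, satisfies $\term\lesssim(2\underline{\mu})^{-\frac12}\norm{\VEC{f}}$.

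The core of the proof is the inf--sup condition. Given $q_h\in P_h^k\subset P$, the classical surjectivity of the divergence from $\VEC{U}=H^1_0(\Omega)^d$ onto $P$ provides $\VEC{v}_{q_h}\in\VEC{U}$ with $\DIV\VEC{v}_{q_h}=-q_h$ and $\norm[H^1(\Omega)^d]{\VEC{v}_{q_h}}\lesssim\norm{q_h}$. Setting $\uvec{v}_h\coloneq\Ih\VEC{v}_{q_h}$, which lies in $\UhD$ because $\VEC{v}_{q_h}$ has zero trace on $\partial\Omega$, and using the consistency \eqref{eq:bh.consistency}, we get $\mathrm{b}_h(\uvec{v}_h,q_h)=-(\DIV\VEC{v}_{q_h},q_h)=\norm{q_h}^2$. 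It remains to establish the interpolation bound $\norm[\VEC{U},h]{\Ih\VEC{w}}\lesssim(2\overline{\mu}+\overline{\nu})^{\frac12}\norm[H^1(\Omega)^d]{\VEC{w}}=\beta^{-1}\norm[H^1(\Omega)^d]{\VEC{w}}$ for all $\VEC{w}\in H^1(\Omega)^d$. For the Stokes part, \eqref{eq:aST.stability} reduces this to $\norm[\strain,T]{\IT\VEC{w}}\lesssim\seminorm[H^1(T)^d]{\VEC{w}}$, which follows from the boundedness \eqref{eq:lproj:boundedness} and the approximation/trace estimates \eqref{eq:lproj:approx}--\eqref{eq:lproj:approx.trace} of the $L^2$-projectors, summed with the weight $2\overline{\mu}$. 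For the Darcy part, using $\rDT\IT\VEC{w}=\IRTNT\VEC{w}$ (see \eqref{eq:rDh.Ih=IRTN}) together with the classical $L^2$-boundedness and $\mathcal{O}(h_T)$-approximation of the Raviart--Thomas--N\'ed\'elec interpolator, the trace estimate \eqref{eq:lproj:approx.trace}, and local discrete trace/inverse inequalities, one bounds $\nu_T\norm[T]{\IRTNT\VEC{w}}^2+\mathrm{s}_{\darcy,T}(\IT\VEC{w},\IT\VEC{w})\lesssim\overline{\nu}\norm[H^1(T)^d]{\VEC{w}}^2$; here $\dDT\IT\VEC{w}=\VEC{0}$ when $k\ge 2$ by \eqref{eq:dDT=0.k>=2} while $\norm[T]{\dDT\IT\VEC{w}}\lesssim h_T\seminorm[H^1(T)^d]{\VEC{w}}$ when $k=1$, and the $h_F$-weights in \eqref{eq:sDT} are absorbed using $h_T\lesssim 1$. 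Summing the two contributions gives the bound, hence $\norm[\VEC{U},h]{\Ih\VEC{v}_{q_h}}\lesssim\beta^{-1}\norm{q_h}$ and, by \eqref{eq:norm.U*h}, $\sup_{\uvec{v}_h\in\UhD\setminus\{\uvec{0}\}}\frac{\mathrm{b}_h(\uvec{v}_h,q_h)}{\norm[\VEC{U},h]{\uvec{v}_h}}\gtrsim\beta\norm{q_h}$.

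With the coercivity and boundedness constants equal to $1$ and the inf--sup constant $\gtrsim\beta$, the classical saddle-point theory gives both well-posedness of \eqref{eq:discrete} and the a priori estimate $\norm[\VEC{U},h]{\uvec{u}_h}+\beta\norm{p_h}\lesssim\term+\beta^{-1}\norm{g}$; substituting $\term\lesssim(2\underline{\mu})^{-\frac12}\norm{\VEC{f}}$ from the second paragraph yields \eqref{eq:a-priori}. The main obstacle is the interpolation bound $\norm[\VEC{U},h]{\Ih\VEC{w}}\lesssim\beta^{-1}\norm[H^1(\Omega)^d]{\VEC{w}}$: one must carefully track the joint dependence on $\overline{\mu}$ and $\overline{\nu}$ through both reconstructions and their stabilisations, and check that the auxiliary Raviart--Thomas--N\'ed\'elec and $L^2$-projector estimates hold with constants depending only on $\varrho$ (and $k$, $d$). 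A secondary difficulty is the uniform discrete Korn inequality, which is precisely what lets $\norm{\rDh\uvec{v}_h}$ be controlled by $\norm[\VEC{U},h]{\uvec{v}_h}$ uniformly, including in the Stokes-dominated regime $\underline{\nu}\to 0$.
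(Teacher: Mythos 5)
Your proposal is correct and follows essentially the same route as the paper: coercivity and boundedness of $\mathrm{a}_h$ with unit constants in the $\norm[\VEC{U},h]{{\cdot}}$-norm, an inf--sup condition for $\mathrm{b}_h$ obtained from the surjectivity of the divergence, the consistency property \eqref{eq:bh.consistency} and the interpolation bound $\norm[\VEC{U},h]{\Ih\VEC{w}}\lesssim\beta^{-1}\norm[H^1(\Omega)^d]{\VEC{w}}$, followed by the abstract saddle-point theory and a bound on the source functional $\uvec{v}_h\mapsto(\VEC{f},\rDh\uvec{v}_h)$ via the estimate $\norm[T]{\rDT\uvec{v}_T-\VEC{v}_T}\lesssim h_T\seminorm[1,\partial T]{\uvec{v}_T}$ and the discrete Korn inequality \eqref{eq:korn.h}. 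The only cosmetic difference is that the paper keeps the factor $h_T$ in front of $\seminorm[1,\partial T]{\uvec{v}_T}$ when controlling $\rDT\uvec{v}_T$ and absorbs it through $h_T\le d_\Omega$ at the global stage, exactly as you do implicitly.
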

\begin{proof}
  See Section \ref{sec:proofs:well-posedness}.
\end{proof}
We next investigate the convergence of the method.
We measure the error as the difference between the discrete solution and the interpolate of the exact solution defined as
$$
(\hat{\uvec{u}}_h,\hat p_h)\coloneq (\Ih\VEC{u},\lproj{k} p)\in\UhD\times P_h^k.
$$
After noticing that, for any $q_h\in P_h^k$,
\begin{equation}\label{eq:exact.vel.press}
-\mathrm{b}_h(\hat{\uvec{u}}_h,q_h)
=(\DIV\VEC{u},q_h)=(g,q_h)
=-\mathrm{b}_h(\uvec{u}_h,q_h)
\end{equation}
owing to the consistency property \eqref{eq:bh.consistency} of $\mathrm{b}_h$ together with the continuous \eqref{eq:weak:mass} and discrete \eqref{eq:discrete:mass} mass conservation equations, it is a simple matter to check that the discretisation error
$$
(\uvec{e}_h,\epsilon_h)\coloneq (\uvec{u}_h - \hat{\uvec{u}}_h, p_h - \hat{p}_h)
$$
solves the following problem:
\begin{equation}\label{eq:error.eq}
  \begin{alignedat}{2}
    \mathrm{a}_h(\uvec{e}_h,\uvec{v}_h) + \mathrm{b}_h(\uvec{v}_h,\epsilon_h)
    &= \langle\err,\uvec{v}_h\rangle
    &\qquad&\forall\uvec{v}_h\in\UhD,
    \\
    -\mathrm{b}_h(\uvec{e}_h,q_h) &= 0
    &\qquad&\forall q_h\in P_h^k,
  \end{alignedat}
\end{equation}
where $\err$ is the linear functional on $\UhD$ representing the consistency error and such that, for all $\uvec{v}_h\in\UhD$,
\begin{equation}\label{eq:Rup}
  \langle\err,\uvec{v}_h\rangle
  \coloneq (\VEC{f},\rDh\uvec{v}_h)
  - \mathrm{a}_h(\hat{\uvec{u}}_h,\uvec{v}_h) - \mathrm{b}_h(\uvec{v}_h,\hat{p}_h).
\end{equation}

\begin{theorem}[Error estimates and convergence]\label{thm:err.est}
  Denote by $(\VEC{u},p)\in\VEC{U}\times P$ and by $(\uvec{u}_h,p_h)\in\UhD\times P_h^k$ the unique solutions to \eqref{eq:weak} and \eqref{eq:discrete}, respectively.
  Then, the following error estimate holds with $\beta$ defined by \eqref{eq:a-priori}:
  \begin{equation}\label{eq:err.est}
    \norm[\VEC{U},h]{\uvec{e}_h} + \beta\norm{\epsilon_h}
    \lesssim\norm[\VEC{U}^*,h]{\err}.
  \end{equation}
  Moreover, assuming the additional regularity $\VEC{u}\in H^{k+2}(\Th)^d$ and $p\in H^1(\Omega)$, it holds that
  \begin{multline}\label{eq:conv.rate}
    \norm[\VEC{U}^*,h]{\err}\lesssim
    \\
    \left[
      \sum_{T\in\Th}\left(
      (2\mu_T)\min(1,\Cf^{-1})h_T^{2(k+1)}\seminorm[H^{k+2}(T)^d]{\VEC{u}}^2
      + \alpha_\mu \nu_T\min(1,\Cf)h_T^{2(k+1)}\seminorm[H^{k+1}(T)^d]{\VEC{u}}^2
      \right)
      \right]^{\frac12}
  \end{multline}
  where, for all $T\in\Th$, we have introduced the local friction coefficient 
  \begin{equation}\label{eq:Cf}
    \Cf\coloneq\frac{\nu_T h_T^2}{2\mu_T}
  \end{equation}
  with the convention that $\Cf^{-1}\coloneq+\infty$ if $\nu_T=0$, and we have set
  \begin{equation}\label{eq:alpha.mu}
    \alpha_\mu\coloneq\begin{cases}
    \overline{\mu}/\underline{\mu} & \text{if $k=1$ and $\min_{T\in\Th}\Cf\le 1$},
    \\
    1 & \text{otherwise.}
    \end{cases}
  \end{equation}
\end{theorem}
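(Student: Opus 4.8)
Here is how I would approach the proof.

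\medskip
\noindent\emph{Two parts; the easy one.} I would establish the two displayed inequalities separately. Estimate~\eqref{eq:err.est} comes essentially for free from the machinery of Theorem~\ref{thm:well-posedness}: testing the first equation of the error system~\eqref{eq:error.eq} with $\uvec{v}_h=\uvec{e}_h$ and using the second one (which says $\uvec{e}_h$ lies in the discrete kernel of $\mathrm{b}_h$, so $\mathrm{b}_h(\uvec{e}_h,\epsilon_h)=0$) gives $\norm[\VEC{U},h]{\uvec{e}_h}^2=\mathrm{a}_h(\uvec{e}_h,\uvec{e}_h)=\langle\err,\uvec{e}_h\rangle$, since $\mathrm{a}_h(\uvec{v}_h,\uvec{v}_h)=\norm[\VEC{U},h]{\uvec{v}_h}^2$ by~\eqref{eq:norm.Uh}; the pressure error $\norm{\epsilon_h}$ is then recovered from the inf--sup stability of $\mathrm{b}_h$ (with constant $\gtrsim\beta$) proved there, exactly as in that proof. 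All the work therefore lies in bounding $\langle\err,\uvec{v}_h\rangle$ for a generic $\uvec{v}_h\in\UhD$. I would first insert the momentum balance~\eqref{eq:strong:momentum} into the definition~\eqref{eq:Rup} of $\err$. The pressure contribution $(\GRAD p,\rDh\uvec{v}_h)$ is integrated by parts element by element; since $\rDh\uvec{v}_h\in\Hdiv$ and $p\in H^1(\Omega)$ the interface terms telescope, the boundary ones vanish because $\rDT\uvec{v}_T\SCAL\normal_{TF}=\VEC{v}_F\SCAL\normal_{TF}=\VEC{0}$ on $\partial\Omega$, and $\DIV\rDh\uvec{v}_h\in\Poly{k}(\Th)$, so by Proposition~\ref{prop:bh.div.rDh} this term equals $\mathrm{b}_h(\uvec{v}_h,\hat{p}_h)$ and cancels the last summand of~\eqref{eq:Rup} --- which is what makes the estimate pressure-independent. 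What remains is $\langle\err,\uvec{v}_h\rangle=\term_{\stokes}+\term_{\darcy}$, with $\term_{\stokes}\coloneq(-\DIV(2\mu\GRADs\VEC{u}),\rDh\uvec{v}_h)-\mathrm{a}_{\stokes,h}(\hat{\uvec{u}}_h,\uvec{v}_h)$ and $\term_{\darcy}\coloneq(\nu\VEC{u},\rDh\uvec{v}_h)-\mathrm{a}_{\darcy,h}(\hat{\uvec{u}}_h,\uvec{v}_h)$.

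\medskip
\noindent\emph{Reducing the two consistency errors to approximation terms.} Each of $\term_{\stokes}$, $\term_{\darcy}$ I would treat by the standard HHO route: replace $\VEC{u}$ by a local polynomial for which consistency is exact, then estimate the remainder. For $\term_{\stokes}$, using $\rST\IT\VEC{u}=\elproj{k+1}\VEC{u}$ (see~\eqref{eq:rST.IT=elproj}), the definition~\eqref{eq:rST:pde} of $\rST\uvec{v}_T$, the orthogonality~\eqref{eq:elproj.k+1} of the strain projector against $\Poly{k+1}(T)^d\supseteq\Poly{l}(T)^d$, element-wise integration by parts, and the polynomial consistency~(S3) of $\mathrm{s}_{\stokes,T}$, one reduces $\term_{\stokes}$ to a sum over $T$ of a face residual in $\GRADs(\VEC{u}-\elproj{k+1}\VEC{u})\normal_{TF}$ tested against $\VEC{v}_F-\VEC{v}_T$, a stabilisation residual $\mathrm{s}_{\stokes,T}(\IT(\VEC{u}-\elproj{k+1}\VEC{u}),\uvec{v}_T)$, and the genuinely Brinkman term $(-\DIV(2\mu\GRADs\VEC{u}),\rDT\uvec{v}_T-\VEC{v}_T)_T$; the first two are controlled by Cauchy--Schwarz, the strain-projector approximation results of Theorem~\ref{thm:elproj:approx} and Corollary~\ref{cor:elproj:approx.trace} (with $\ell=k+1$, whose constants depend only on $\varrho$), and~\eqref{eq:aST.stability}. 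Analogously, using $\rDT\IT\VEC{u}=\IRTNT\VEC{u}$ (see~\eqref{eq:rDh.Ih=IRTN}), $\term_{\darcy}$ splits into $\sum_{T\in\Th}\nu_T(\VEC{u}-\IRTNT\VEC{u},\rDT\uvec{v}_T)_T$ plus a residual $\mathrm{s}_{\darcy,T}(\IT(\VEC{u}-\IRTNT\VEC{u}),\uvec{v}_T)$ (using that $\mathrm{s}_{\darcy,T}$ annihilates $\IT\VEC{w}$ for $\VEC{w}\in\RTN(T)$, in particular for $\VEC{w}\in\Poly{k}(T)^d$), both bounded by the classical $O(h_T^{k+1})$ estimates for $\IRTNT$ --- equivalently by~\eqref{eq:lproj:approx}--\eqref{eq:lproj:approx.trace} and the characterisation~\eqref{eq:rDT.bis} --- together with the definition of $\norm[\darcy,T]{{\cdot}}$.

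\medskip
\noindent\emph{Where the robustness comes from.} The cutoff form of~\eqref{eq:conv.rate} rests on one observation: each reconstruction mismatch, above all $\rDT\uvec{v}_T-\VEC{v}_T$ --- which by~\eqref{eq:rDT.bis} has vanishing $\vlproj[T]{k-1}$-part and normal trace $(\VEC{v}_F-\VEC{v}_T)\SCAL\normal_{TF}$ --- admits \emph{two} bounds, one of order $h_T(2\mu_T)^{-\frac12}\norm[\stokes,T]{\uvec{v}_T}$ (via a discrete trace inequality and $\seminorm[1,\partial T]{{\cdot}}$, using~\eqref{eq:aST.stability}) and one of order $\nu_T^{-\frac12}\norm[\darcy,T]{\uvec{v}_T}$. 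Retaining the smaller of the two in each term and inserting the definition~\eqref{eq:Cf} turns the prefactor $2\mu_T$ into $2\mu_T\min(1,\Cf^{-1})$ on the $H^{k+2}$-regularity part and $\nu_T$ into $\nu_T\min(1,\Cf)$ on the $H^{k+1}$-regularity part. For $k=1$ only --- where~\eqref{eq:l} gives $l=1>k-1$, so that the purely local Darcy-norm control of $\rDT\uvec{v}_T-\VEC{v}_T$ is lost (for $k\ge2$ one has instead $\vlproj[T]{l}\rDT\uvec{v}_T=\VEC{v}_T$, cf.~\eqref{eq:dDT=0.k>=2}) --- one must route through $\norm[T]{\VEC{v}_T}$, the global discrete Poincar\'e inequality on $\UhD$, and $\norm[\strain,h]{{\cdot}}\lesssim(2\underline{\mu})^{-\frac12}\norm[\stokes,h]{{\cdot}}$ from~\eqref{eq:aSh.stability}; being intrinsically global, this detour is what produces the factor $\alpha_\mu=\overline{\mu}/\underline{\mu}$ of~\eqref{eq:alpha.mu}.

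\medskip
\noindent\emph{Conclusion and main obstacle.} Summing the element-wise bounds, applying Cauchy--Schwarz over $\Th$ and dividing by $\norm[\VEC{U},h]{\uvec{v}_h}$ yields~\eqref{eq:conv.rate}; combined with~\eqref{eq:err.est} and $\min(1,\Cf^{\pm1})\le1$ this gives the advertised $O(h^{k+1})$ convergence. No single manipulation is hard; I expect the main obstacle to be the systematic bookkeeping of the weights $\mu_T$, $\nu_T$ and $h_T$ --- every reconstruction mismatch has to be estimated against \emph{both} local norms so that the minimum of the two bounds reproduces the cutoffs $\min(1,\Cf^{\pm1})$ --- together with the careful isolation of the $k=1$ case, where the polynomial-degree choice~\eqref{eq:l} breaks a local orthogonality of the Darcy reconstruction and a global argument, hence the $\alpha_\mu$ correction, becomes unavoidable.
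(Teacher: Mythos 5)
Your proposal follows essentially the same route as the paper: the same splitting of the consistency error (the paper merely separates your $\term_{\stokes}$ into the Stokes consistency error tested against $\VEC{v}_h$ plus the correction $(\DIV(2\mu\GRADs\VEC{u}),\VEC{v}_h-\rDh\uvec{v}_h)$, handled with exactly the $\vlproj[T]{k-1}$-orthogonality of $\rDT\uvec{v}_T-\VEC{v}_T$ you invoke), the same cancellation of the pressure term via Proposition~\ref{prop:bh.div.rDh}, and the same dual local bounds in the Stokes and Darcy norms producing the cutoffs $\min(1,\Cf^{\pm1})$. One small correction on the $k=1$ case: the paper's detour is not through $\norm[T]{\VEC{v}_T}$ and a global Poincar\'e inequality (which would lose one power of $h_T$) but through $\VEC{v}_T-\vlproj[T]{0}\VEC{v}_T$, bounded by $h_T\norm[T]{\GRAD\VEC{v}_T}$ via a \emph{local} Poincar\'e inequality so as to retain the order $h_T^{k+1}$, followed by the global discrete \emph{Korn} inequality \eqref{eq:korn.h} to control $\sum_{T\in\Th}\norm[T]{\GRAD\VEC{v}_T}^2$ by $\norm[\strain,h]{\uvec{v}_h}^2$; it is this Korn step combined with \eqref{eq:aSh.stability} that produces $\alpha_\mu=\overline{\mu}/\underline{\mu}$.
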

\begin{proof}
  See Section \ref{sec:proofs:convergence}.
\end{proof}
Some remarks are in order.
\begin{remark}[Robustness of the error estimate]
  The error estimate \eqref{eq:err.est} is robust across the entire range of values $\Cf\in [0,+\infty)$ (and, as we will see in the next remark, $+\infty$ can also be included) thanks to the presence of the cutoff factors $\min(1,\Cf^{-1})$ and $\min(1,\Cf)$ that prevent the multiplicative constants in the right-hand side from exploding.
    Those mesh elements for which $\Cf<1$ are in the Stokes-dominated regime and, correspondingly, the first contribution inside the sum in \eqref{eq:conv.rate} dominates.
  On the other hand, those elements for which $\Cf>1$ are in the Darcy-dominated regime and, correspondingly, the second contribution dominates.
  Since the method is designed so that these contributions are equilibrated, convergence in $\mathcal{O}(h^{k+1})$ is attained irrespectively of the local regime.
  The specific forms of the Stokes and Darcy velocity reconstructions play a key role in attaining this goal; see the discussion in Remark \ref{rem:darcy} below.
  Comparing, e.g., with \cite[Theorem 3.2]{Konno.Stenberg:11}, where a Raviart--Thomas--N\'ed\'elec approximation of the velocity is used also in the Stokes term, we gain one order of convergence in the Stokes-dominated regime.
  Similar considerations hold for the Virtual Element method of \cite{Vacca:18}, see in particular the error estimate in Theorem 5.2 therein.
\end{remark}
\begin{remark}[Application to the Darcy problem]\label{rem:darcy}
  Assume $\underline{\nu}>0$.
  A close inspection of the proofs in Section \ref{sec:proofs} below reveals that the proposed method can be used also when $\mu=0$ formally setting $\Cf\coloneq+\infty$ for all $T\in\Th$.  
  In this case, denoting by $\gamma_{\normal}$ the normal trace operator on $\partial\Omega$, the velocity space becomes $\VEC{U}=\left\{\VEC{v}\in\VEC{H}({\rm div};\Omega)\st \gamma_{\normal}(\VEC{v})=0\mbox{ on }\partial\Omega\right\}$, and \eqref{eq:weak} coincides with the mixed formulation of the Darcy problem.
  In particular, the well-posedness results of Theorem \ref{thm:well-posedness} remain valid replacing the term $(2\underline{\mu})^{-1}\norm{\VEC{f}}$ by $\underline{\nu}^{-1}\norm{\VEC{f}}$ in \eqref{eq:a-priori}, and so is the case for the error estimates of Theorem \ref{thm:err.est} under the regularity $\VEC{u}\in H^1(\Omega)^d\cap H^{k+1}(\Th)^d$.

  The key point to achieve well-posedness when $\mu=0$ is the introduction of the stabilisation term \eqref{eq:sDT} in the local Darcy bilinear form.
  Thanks to this term, we can control the discrete unknowns that are not controlled by the $L^2$-norm of the Darcy velocity reconstruction, namely the tangential velocity unknowns on interfaces and the linear component of the element unknowns when $k=1$; see Proposition \ref{prop:norm.Dh}.
  The tangential components of velocity unknowns on boundary faces, on the other hand, are set to zero in the definition \eqref{eq:UhD} of the space $\UhD$, and do not appear in the formulation of the method when $\mu=0$.
  This means that they are discarded, coherently with the fact that we cannot enforce their value when $\mu=0$.
  It is precisely for this reason that the boundary term in \eqref{eq:sDT} is only taken on interfaces.

  The key point to retain convergence in $h^{k+1}$ when $\mu=0$ is the specific form \eqref{eq:rDT} of the Darcy velocity reconstruction, and its use both in the Darcy contribution and in the source term in \eqref{eq:discrete:momentum}.
  The role of this choice is to make the term $\term_4$ in the proof of Theorem \ref{thm:err.est} vanish (the corresponding crucial property is stated in Proposition \ref{prop:bh.div.rDh}).
  More trivial discretisations of the Darcy term (obtained, e.g., by taking for all $T\in\Th$ the element unknowns in $\Poly{k}(T)^d$ and setting $\rDT\uvec{v}_T=\VEC{v}_T$) would reduce by one the order of convergence of the method.
  Using a discretisation of the Darcy contribution inspired by the Mixed High-Order method of \cite{Di-Pietro.Ern:17}, on the other hand, would reduce by one the convergence rate for $\mu\neq 0$.
  As a matter of fact, the convergence in $h^{k+1}$ for this choice is intimately linked to the fact that $\nu\VEC{u}$ is a gradient (which is true for the Darcy problem but not for the Brinkman problem).

  We conclude this remark by noticing that the method for the Darcy problem can also be extended to treat the case $k=l=0$. This point is numerically demonstrated in Section \ref{sec:numerical.examples}.
\end{remark}
\begin{remark}[Pressure-robustness]
  It is also interesting to notice that the right-hand side of the error estimate \eqref{eq:err.est} does not depend on the pressure.
  The key to that property is the exact formula \eqref{eq:exact.vel.press} that relates the velocity--pressure coupling applied to the approximate velocity $\uvec{u}_h$ and the interpolant $\hat{\uvec{u}}_h=\Ih\VEC{u}$ of the exact velocity.
  As pointed out in \cite{Di-Pietro.Ern.ea:16} and references therein, this means that the proposed method is robust with respect to source terms $\VEC{f}$ with a large irrotational part.
\end{remark}


\section{Numerical examples}\label{sec:numerical.examples}

In this section we present some numerical examples.

\subsection{Convergence for the Darcy, Brinkman, and Stokes problems with constant coefficients}
\label{sec:numerical.examples:analytical}

We start by assessing the convergence rates predicted by Theorem \ref{thm:err.est} in various regimes.
Set $\Omega\coloneq(0,2)\times(-1,1)$, and define the global friction coefficient $\Cf[\Omega]\coloneq\frac{\nu}{\mu}$, corresponding to a unit global reference length.
We consider the family of solution parametrised by $\Cf[\Omega]\in[0,+\infty]$ such that, setting $\chi_\stokes(\xi)\coloneq \exp(\xi)^{-1}$ for any $\xi\in\Real^+$ and $\chi_\stokes(+\infty)\coloneq 0$, it holds for any $\VEC{x}\in\Omega$,
\begin{equation}\label{eq:analytical.u.p}
  \VEC{u}(\VEC{x}) =
  \chi_\stokes\left(\Cf[\Omega]\right) \VEC{u}_\stokes(\VEC{x})  
  + (1-\chi_\stokes)\left(\Cf[\Omega]\right) \VEC{u}_\darcy(\VEC{x}),\qquad
  p(\VEC{x})\coloneq\cos x_1\sin x_2-p_0,
\end{equation}
where $p_0\in\Real$ 
is such that the zero average condition on $p$ is verified and, defining the stream function $\psi(\VEC{x})\coloneq -\sin x_1\cos x_2$, we have set
$$
\VEC{u}_\darcy(\VEC{x})\coloneq
\begin{cases}
  -\nu^{-1}\GRAD p(\VEC{x}) & \text{if $\nu\neq 0$}, \\
  \VEC{0} & \text{otherwise},
\end{cases}\qquad
\VEC{u}_\stokes(\VEC{x})\coloneq
\curl\psi(\VEC{x}).
$$
The boundary condition on $\VEC{u}$ if $\Cf[\Omega]<+\infty$ or $\VEC{u}\cdot\normal$ if $\Cf[\Omega]=+\infty$, as well as the source terms $\VEC{f}$ and $g$, are chosen coherently with \eqref{eq:analytical.u.p}.
It can be easily checked that $\VEC{u}_\darcy$ and $\VEC{u}_\stokes$ are the limit solutions in the Darcy and Stokes case corresponding, respectively, to $\Cf[\Omega]=+\infty$ ($\chi_\stokes=0$) and $\Cf[\Omega]=0$ ($\chi_\stokes=1$).

We consider a refined sequence $(\Th[h_i])_{0\le i\le 4}$ of triangular meshes in which the meshsize is halved at each refinement, that is to say, $h_{i+1}=h_i/2$ for $0\le i\le 3$; see Figure \ref{fig:meshes}.
The tests were run on a 2016 MacBook Pro equipped with an Intel Core i7 CPU clocked at 2.7GHz and 16Gb of RAM, and the implementation was based on the \texttt{SpaFEDte} platform.
The linear systems were solved using the sparse LU solver from the \texttt{Eigen} library; see http://eigen.tuxfamily.org.
We consider the values $(\mu,\nu)\in\{(0,1),(1,1),(1,0)\}$ for the coefficients.
The corresponding solutions are represented in Figures \ref{fig:regular:darcy}--\ref{fig:regular:stokes}, respectively.
The results for polynomial degrees $k$ up to 4 are collected in Tables \ref{tab:test:regular:darcy}--\ref{tab:test:regular:stokes}, which display:
the number of degrees of freedom $N_{\rm dof}$ after static condensation (see \eqref{eq:Ndof}),
the number $N_{\rm nz}$ of nonzero entries in the statically condensed matrix,
the energy-norm error $\norm[\VEC{U},h]{\uvec{e}_h}$ on the velocity,
the $L^2$-error $\norm{\VEC{e}_h}$ on the velocity,
the $L^2$-error $\norm{\epsilon_h}$ on the pressure,
as well as the assembly time $\tau_{\rm ass}$ and the resolution time $\tau_{\rm sol}$.
Denoting by $e_i$ the error in a given norm at the refinement iteration $i$, the corresponding estimated order of convergence (EOC) is obtained according to the following formula:
$$
{\rm EOC} = \frac{\log e_i - \log e_{i+1}}{\log 2}.
$$

The expected orders of convergence are observed in all the cases, and the method behaves robustly also in the limit cases corresponding to the Stokes and Darcy problem.
As for the $L^2$-norm of the velocity, it converges as $h^{k+1}$ in the Darcy case (see the fifth and sixth columns of Table \ref{tab:test:regular:darcy}) and as $h^{k+2}$ in the Brinkman and Stokes cases (see the fifth and sixth columns of Tables \ref{tab:test:regular:brinkman} and \ref{tab:test:regular:stokes}).
This behaviour is expected, as for the Darcy problem the $L^2$-norm of the velocity coincides with the energy norm, and no superconvergent behaviour can be triggered.
For the Stokes problem, on the other hand, superconvergence in the $L^2$-norm for HHO methods has been proved in, e.g., \cite[Theorem 4.5]{Aghili.Boyaval.ea:15} and \cite[Theorem 7]{Di-Pietro.Ern.ea:16}, and similar arguments can lead to analogous estimates in the Brinkman case.
For the Brinkman and Stokes problems, an inspection of the last lines of Tables \ref{tab:test:regular:brinkman} and \ref{tab:test:regular:stokes} reveals that numerical precision is approached on the finest mesh for $k=4$ and, correspondingly, the order of convergence deteriorates (see the starred values in the tables).

From the rightmost columns of Tables \ref{tab:test:regular:darcy}--\ref{tab:test:regular:stokes}, it can be noticed that the assembly time becomes negligible with respect to the resolution time as finer and finer meshes are considered.
This behaviour had already been observed in other HHO implementations (see, e.g., the numerical results in \cite{Di-Pietro.Specogna:16}). 

\begin{figure}\centering
  \includegraphics[width=3.5cm]{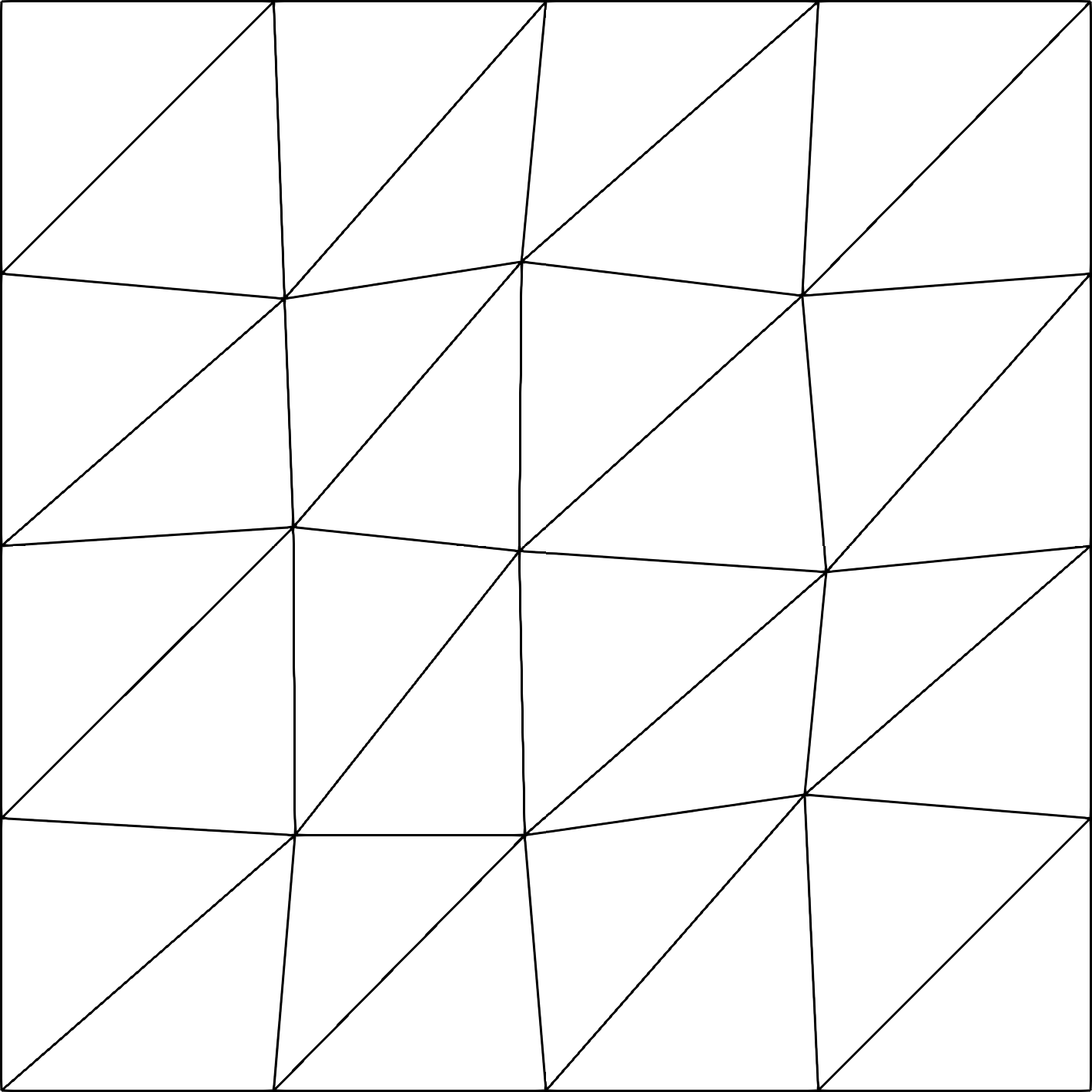}
  \hspace{0.25cm}  
  \includegraphics[width=3.5cm]{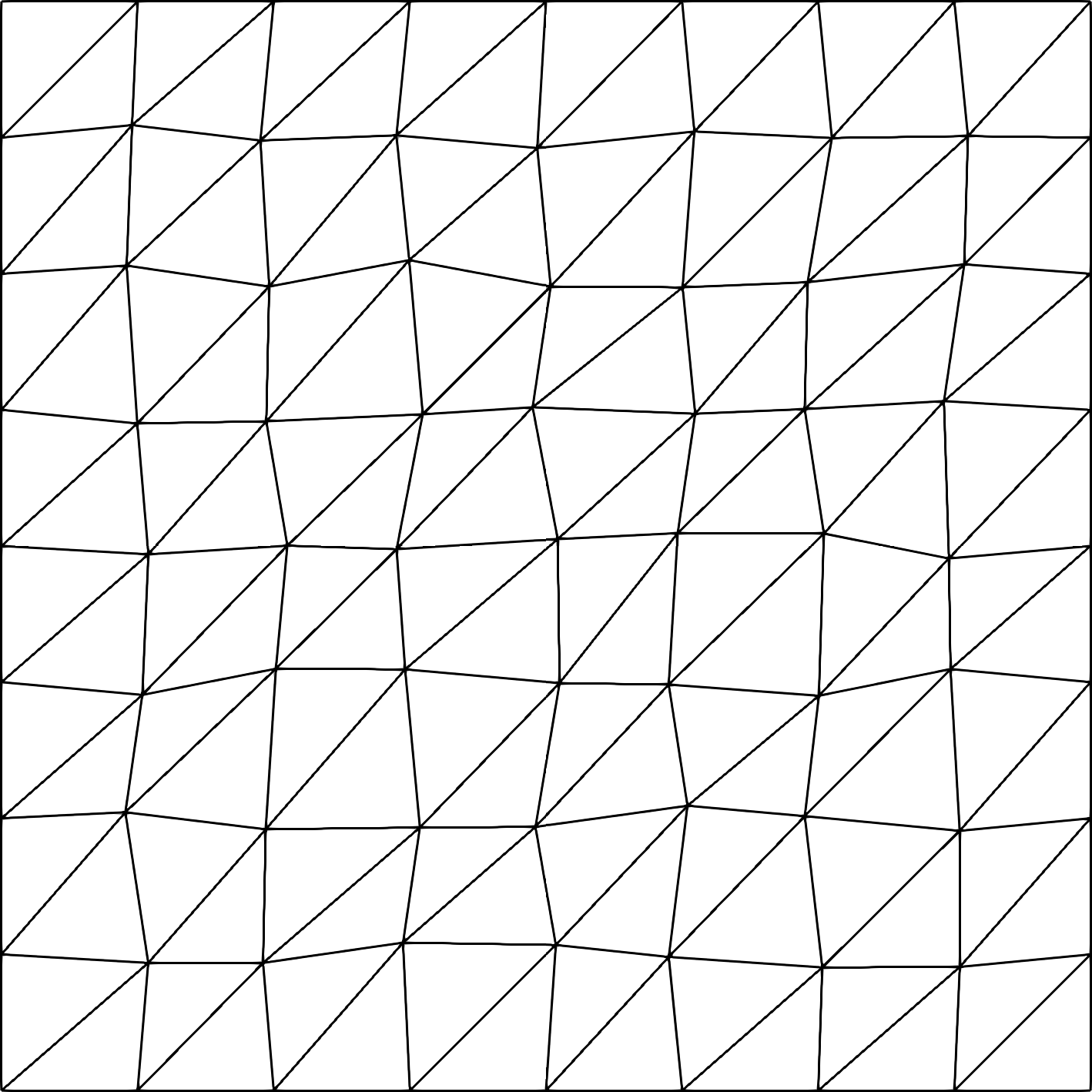}
  \hspace{0.25cm}
  \includegraphics[width=3.5cm]{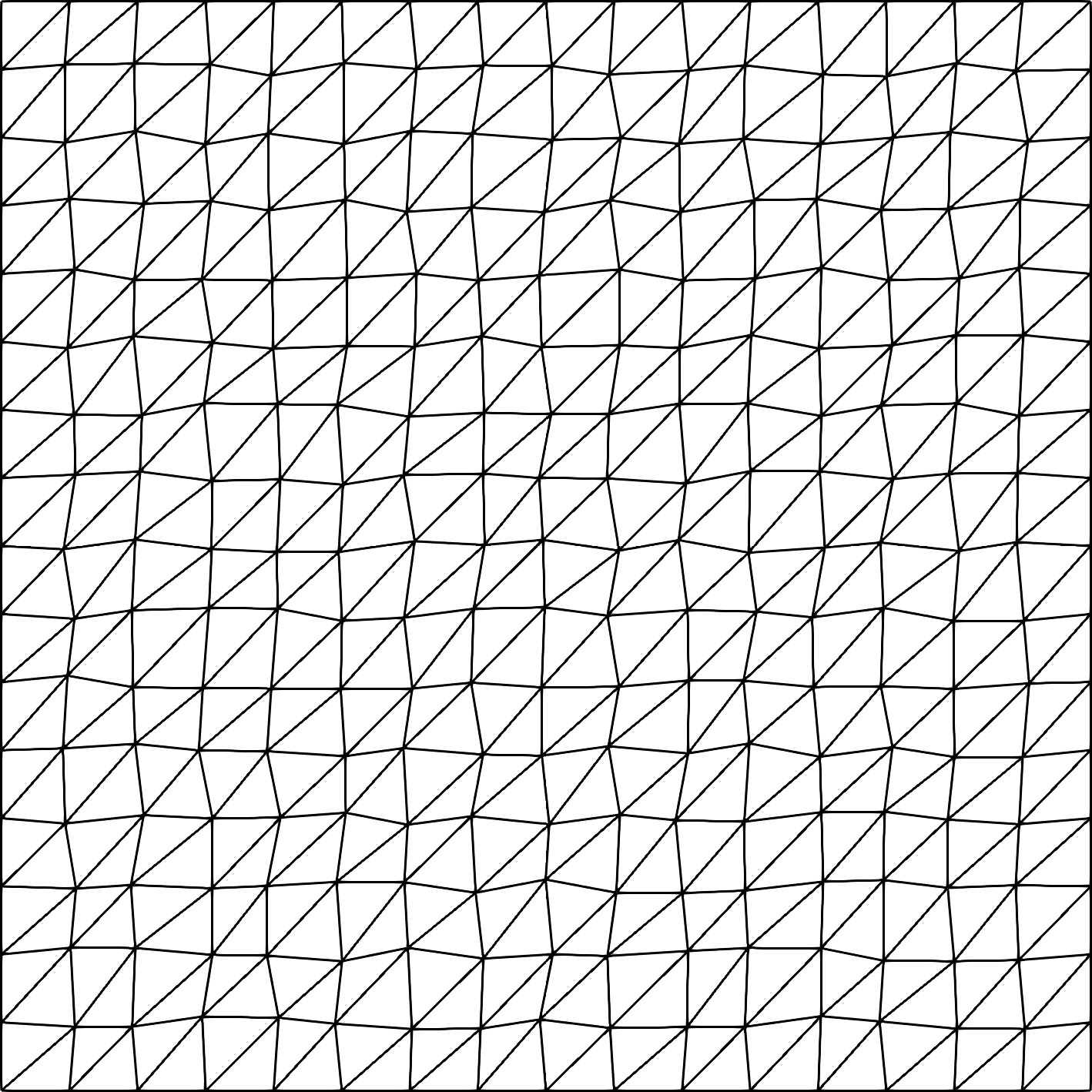}
  \caption{First three meshes of the sequence used for the numerical test of Section \ref{sec:numerical.examples}.\label{fig:meshes}}
\end{figure}

\begin{figure}\centering
  \includegraphics[height=5.50cm]{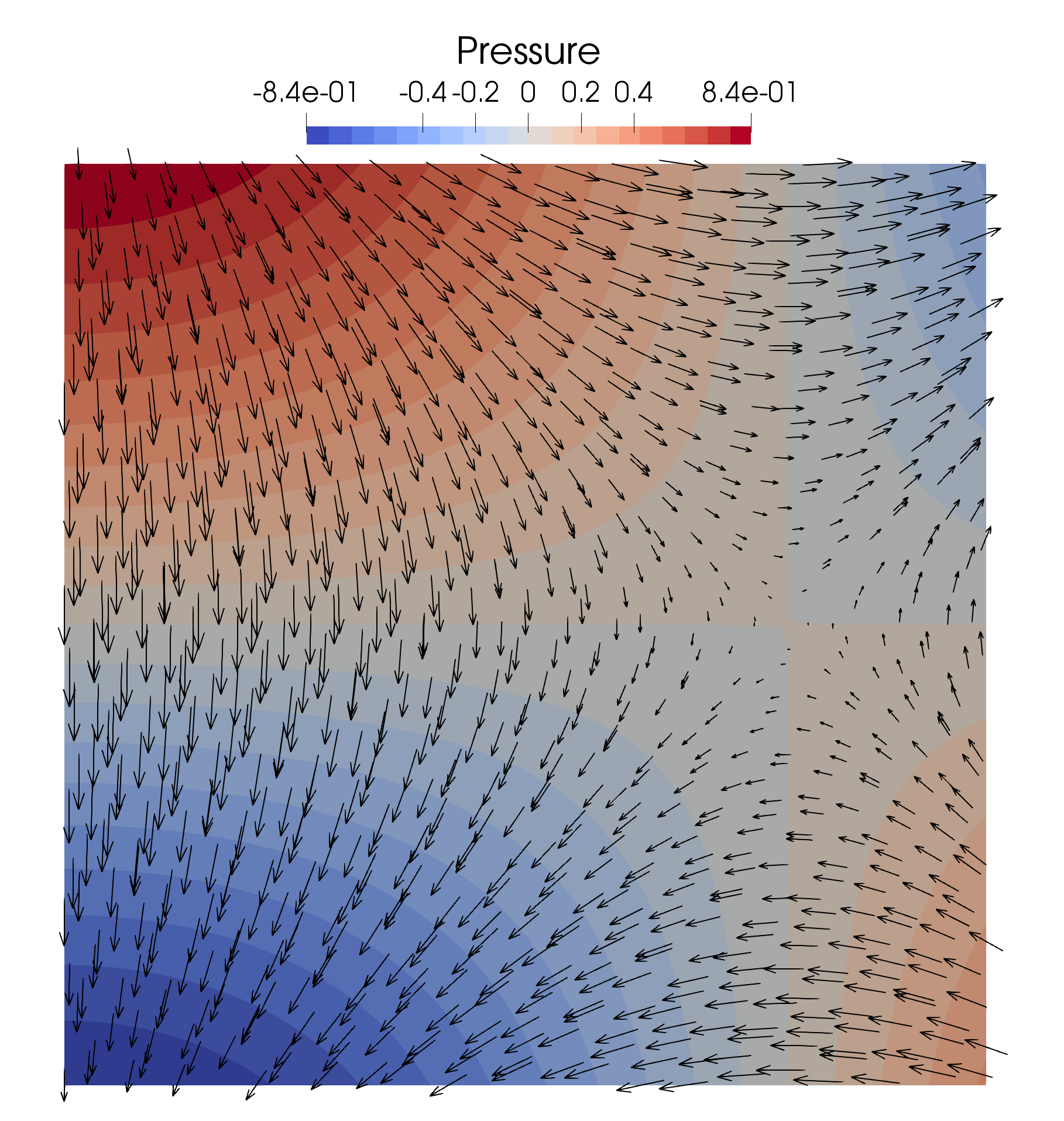}
  \caption{Solution \eqref{eq:analytical.u.p} with $(\mu,\nu)=(0,1)$ (Darcy problem). Arrows indicate orientation and magnitude of the velocity at a given point.\label{fig:regular:darcy}}
  \captionof{table}{Convergence results for the Darcy problem; see Remark \ref{rem:darcy}.\label{tab:test:regular:darcy}}
  \begin{footnotesize}
    \begin{tabular}{ccccccccccc}
      \toprule
      $N_{\rm dof}$  & $N_{\rm nz}$ & $\norm[\VEC{U},h]{\uvec{e}_h}$ & EOC & $\norm{\VEC{e}_h}$ & EOC & $\norm{\epsilon_h}$ & EOC & $\tau_{\rm ass}$ & $\tau_{\rm sol}$ \\
      \midrule
      \multicolumn{10}{c}{$k=0$} \\
      \midrule \\
      113        & 1072       & 1.69e-01   & --         & 1.69e-01   & --         & 1.39e-01   & --         & 2.26e-03   & 9.68e-04   \\ 
      481        & 4944       & 8.84e-02   & 0.94       & 8.84e-02   & 0.94       & 4.27e-02   & 1.70       & 1.19e-02   & 5.34e-03   \\ 
      1985       & 21136      & 4.47e-02   & 0.98       & 4.47e-02   & 0.98       & 1.18e-02   & 1.86       & 3.34e-02   & 5.83e-02   \\ 
      8065       & 87312      & 2.22e-02   & 1.01       & 2.22e-02   & 1.01       & 3.69e-03   & 1.67       & 1.12e-01   & 1.02e+00   \\ 
      32513      & 354832     & 1.09e-02   & 1.03       & 1.09e-02   & 1.03       & 1.45e-03   & 1.35       & 3.94e-01   & 3.39e+01   \\ 
      \midrule
      \multicolumn{10}{c}{$k=1$} \\
      \midrule \\
      193        & 3456       & 1.33e-02   & --         & 3.89e-03   & --         & 5.15e-03   & --         & 4.24e-03   & 1.71e-03   \\ 
      833        & 16192      & 2.65e-03   & 2.32       & 7.73e-04   & 2.33       & 1.01e-03   & 2.36       & 1.98e-02   & 1.91e-02   \\ 
      3457       & 69696      & 6.55e-04   & 2.02       & 1.90e-04   & 2.03       & 2.27e-04   & 2.15       & 6.16e-02   & 1.35e-01   \\ 
      14081      & 288832     & 1.66e-04   & 1.98       & 4.80e-05   & 1.98       & 5.53e-05   & 2.03       & 2.05e-01   & 1.94e+00   \\ 
      56833      & 1175616    & 4.32e-05   & 1.94       & 1.25e-05   & 1.94       & 1.37e-05   & 2.01       & 7.70e-01   & 6.49e+01   \\ 
      \midrule
      \multicolumn{10}{c}{$k=2$} \\
      \midrule \\
      273        & 7216       & 4.84e-03   & --         & 1.25e-03   & --         & 2.48e-04   & --         & 7.61e-03   & 2.57e-03   \\ 
      1185       & 34000      & 7.55e-04   & 2.68       & 1.94e-04   & 2.68       & 2.94e-05   & 3.08       & 3.64e-02   & 4.46e-02   \\ 
      4929       & 146704     & 1.00e-04   & 2.91       & 2.59e-05   & 2.90       & 3.76e-06   & 2.97       & 1.23e-01   & 2.39e-01   \\ 
      20097      & 608656     & 1.29e-05   & 2.95       & 3.36e-06   & 2.95       & 4.77e-07   & 2.98       & 4.02e-01   & 3.84e+00   \\ 
      81153      & 2478736    & 1.64e-06   & 2.98       & 4.25e-07   & 2.98       & 5.94e-08   & 3.00       & 1.55e+00   & 8.75e+01   \\ 
      \midrule
      \multicolumn{10}{c}{$k=3$} \\
      \midrule \\
      353        & 12352      & 1.33e-04   & --         & 2.53e-05   & --         & 1.29e-05   & --         & 1.98e-02   & 3.63e-03   \\ 
      1537       & 58368      & 8.03e-06   & 4.05       & 1.48e-06   & 4.09       & 8.93e-07   & 3.85       & 6.52e-02   & 4.05e-02   \\ 
      6401       & 252160     & 5.17e-07   & 3.96       & 9.48e-08   & 3.97       & 5.64e-08   & 3.99       & 2.17e-01   & 5.34e-01   \\ 
      26113      & 1046784    & 3.28e-08   & 3.98       & 5.98e-09   & 3.99       & 3.56e-09   & 3.98       & 8.46e-01   & 7.84e+00   \\ 
      105473     & 4264192    & 2.08e-09   & 3.98       & 3.80e-10   & 3.98       & 2.22e-10   & 4.01       & 3.39e+00   & 1.27e+02   \\ 
      \midrule
      \multicolumn{10}{c}{$k=4$} \\
      \midrule \\
      433        & 18864      & 1.34e-05   & --         & 2.31e-06   & --         & 5.47e-07   & --         & 3.36e-02   & 5.35e-03   \\ 
      1889       & 89296      & 5.24e-07   & 4.68       & 8.94e-08   & 4.69       & 1.79e-08   & 4.93       & 1.30e-01   & 4.59e-02   \\ 
      7873       & 386064     & 1.70e-08   & 4.95       & 2.91e-09   & 4.94       & 5.91e-10   & 4.92       & 4.04e-01   & 7.72e-01   \\ 
      32129      & 1603216    & 5.51e-10   & 4.95       & 9.45e-11   & 4.95       & 1.88e-11   & 4.97       & 1.48e+00   & 1.02e+01   \\ 
      129793     & 6531984    & 1.92e-11   & 4.84       & 3.26e-12   & 4.86       & 5.84e-13   & 5.01       & 6.11e+00   & 1.70e+02   \\ 
      \bottomrule
    \end{tabular}
  \end{footnotesize}
\end{figure}

\begin{figure}\centering  
  \includegraphics[height=5.50cm]{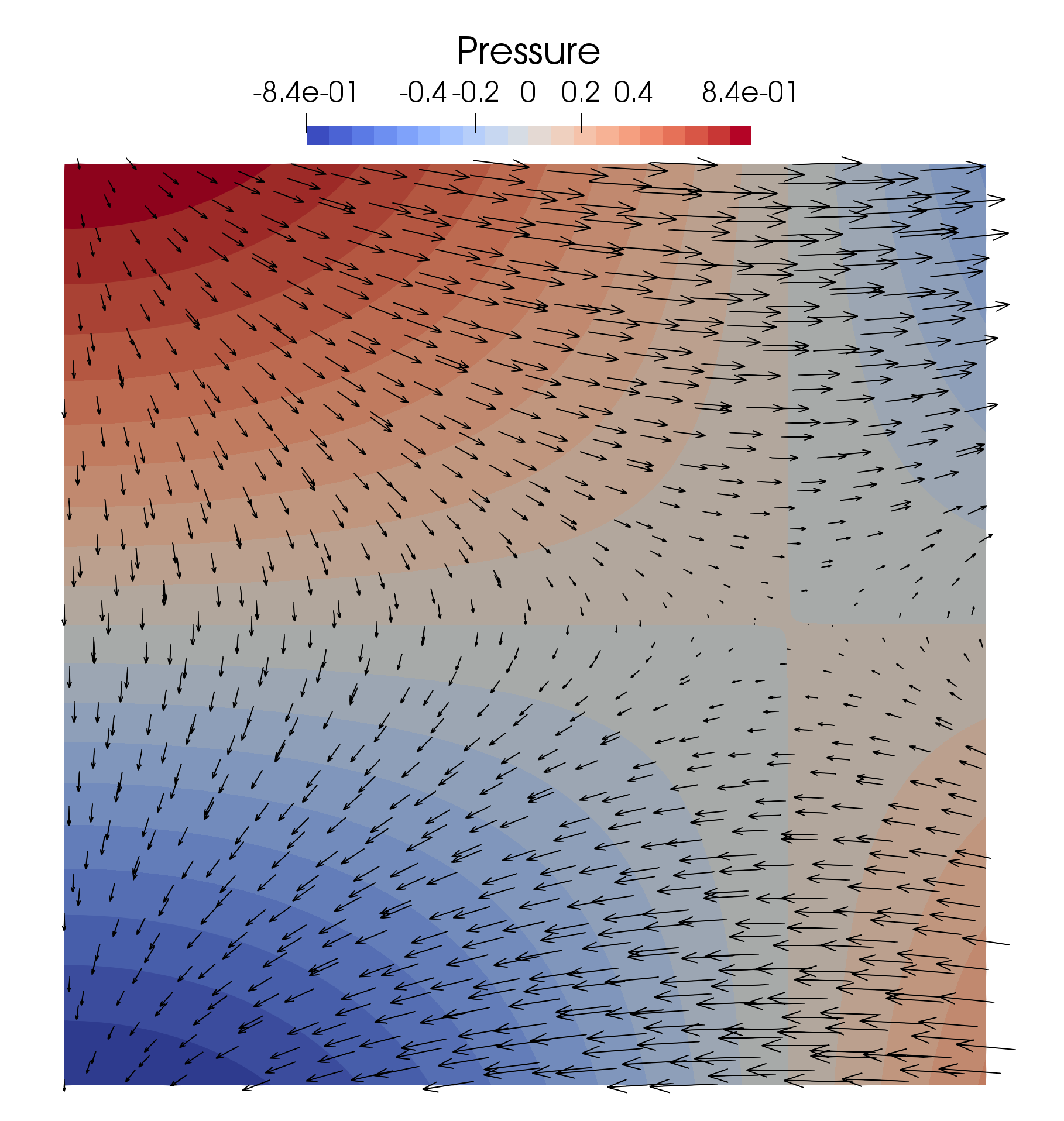}
  \caption{Solution \eqref{eq:analytical.u.p} with $(\mu,\nu)=(1,1)$ (Brinkman problem). Arrows indicate orientation and magnitude of the velocity at a given point.\label{fig:regular:brinkman}}
  \captionof{table}{Convergence results for the Brinkman problem.\label{tab:test:regular:brinkman}}
  \begin{footnotesize}
    \begin{tabular}{cccccccccc}
      \toprule
      $N_{\rm dof}$  & $N_{\rm nz}$ & $\norm[\VEC{U},h]{\uvec{e}_h}$ & EOC & $\norm{\VEC{e}_h}$ & EOC & $\norm{\epsilon_h}$ & EOC & $\tau_{\rm ass}$ & $\tau_{\rm sol}$ \\
      \midrule
      \multicolumn{10}{c}{$k=1$} \\
      \midrule \\
      193        & 3456       & 6.48e-02   & --         & 3.51e-03   & --         & 3.40e-02   & --         & 4.86e-03   & 1.87e-03   \\ 
      833        & 16192      & 2.78e-02   & 1.22       & 7.40e-04   & 2.24       & 9.34e-03   & 1.86       & 1.65e-02   & 2.05e-02   \\ 
      3457       & 69696      & 8.93e-03   & 1.64       & 1.18e-04   & 2.65       & 2.60e-03   & 1.84       & 6.32e-02   & 1.19e-01   \\ 
      14081      & 288832     & 2.43e-03   & 1.88       & 1.62e-05   & 2.87       & 6.84e-04   & 1.93       & 2.20e-01   & 1.69e+00   \\ 
      56833      & 1175616    & 6.30e-04   & 1.95       & 2.10e-06   & 2.95       & 1.75e-04   & 1.97       & 8.13e-01   & 4.38e+01   \\
      \midrule
      \multicolumn{10}{c}{$k=2$} \\
      \midrule \\
      273        & 7216       & 3.72e-03   & --         & 1.21e-04   & --         & 1.74e-03   & --         & 8.64e-03   & 2.76e-03   \\ 
      1185       & 34000      & 7.56e-04   & 2.30       & 1.24e-05   & 3.28       & 1.98e-04   & 3.13       & 3.56e-02   & 3.12e-02   \\ 
      4929       & 146704     & 1.13e-04   & 2.74       & 9.35e-07   & 3.73       & 2.29e-05   & 3.12       & 1.28e-01   & 1.87e-01   \\ 
      20097      & 608656     & 1.52e-05   & 2.89       & 6.30e-08   & 3.89       & 2.70e-06   & 3.08       & 4.23e-01   & 2.97e+00   \\ 
      81153      & 2478736    & 1.96e-06   & 2.95       & 4.08e-09   & 3.95       & 3.27e-07   & 3.04       & 1.71e+00   & 5.92e+01   \\
      \midrule
      \multicolumn{10}{c}{$k=3$} \\
      \midrule \\
      353        & 12352      & 2.44e-04   & --         & 6.48e-06   & --         & 1.41e-04   & --         & 1.74e-02   & 3.93e-03   \\ 
      1537       & 58368      & 1.99e-05   & 3.62       & 2.68e-07   & 4.60       & 9.32e-06   & 3.92       & 7.41e-02   & 4.50e-02   \\ 
      6401       & 252160     & 1.27e-06   & 3.97       & 8.50e-09   & 4.98       & 5.65e-07   & 4.04       & 2.53e-01   & 4.28e-01   \\ 
      26113      & 1046784    & 8.26e-08   & 3.94       & 2.79e-10   & 4.93       & 3.58e-08   & 3.98       & 9.11e-01   & 5.58e+00   \\ 
      105473     & 4264192    & 5.19e-09   & 3.99       & 8.78e-12   & 4.99       & 2.23e-09   & 4.00       & 3.67e+00   & 8.72e+01   \\ 
      \midrule
      \multicolumn{10}{c}{$k=4$} \\
      \midrule \\
      433        & 18864      & 1.10e-05   & --         & 2.61e-07   & --         & 6.84e-06   & --         & 3.13e-02   & 1.15e-02   \\ 
      1889       & 89296      & 3.98e-07   & 4.78       & 4.75e-09   & 5.78       & 2.14e-07   & 5.00       & 1.39e-01   & 4.46e-02   \\ 
      7873       & 386064     & 1.33e-08   & 4.90       & 7.93e-11   & 5.90       & 6.83e-09   & 4.97       & 4.80e-01   & 8.31e-01   \\ 
      32129      & 1603216    & 4.26e-10   & 4.96       & 1.26e-12   & 5.97       & 2.13e-10   & 5.00       & 1.79e+00   & 8.10e+00   \\ 
      129793     & 6531984    & 1.08e-10   & $1.98^*$   & 1.80e-13   & $2.81^*$   & 2.87e-11   & $2.89^*$   & 7.09e+00   & 1.33e+02   \\ 
      \bottomrule
    \end{tabular}
  \end{footnotesize}
\end{figure}

\begin{figure}\centering
  \includegraphics[height=5.50cm]{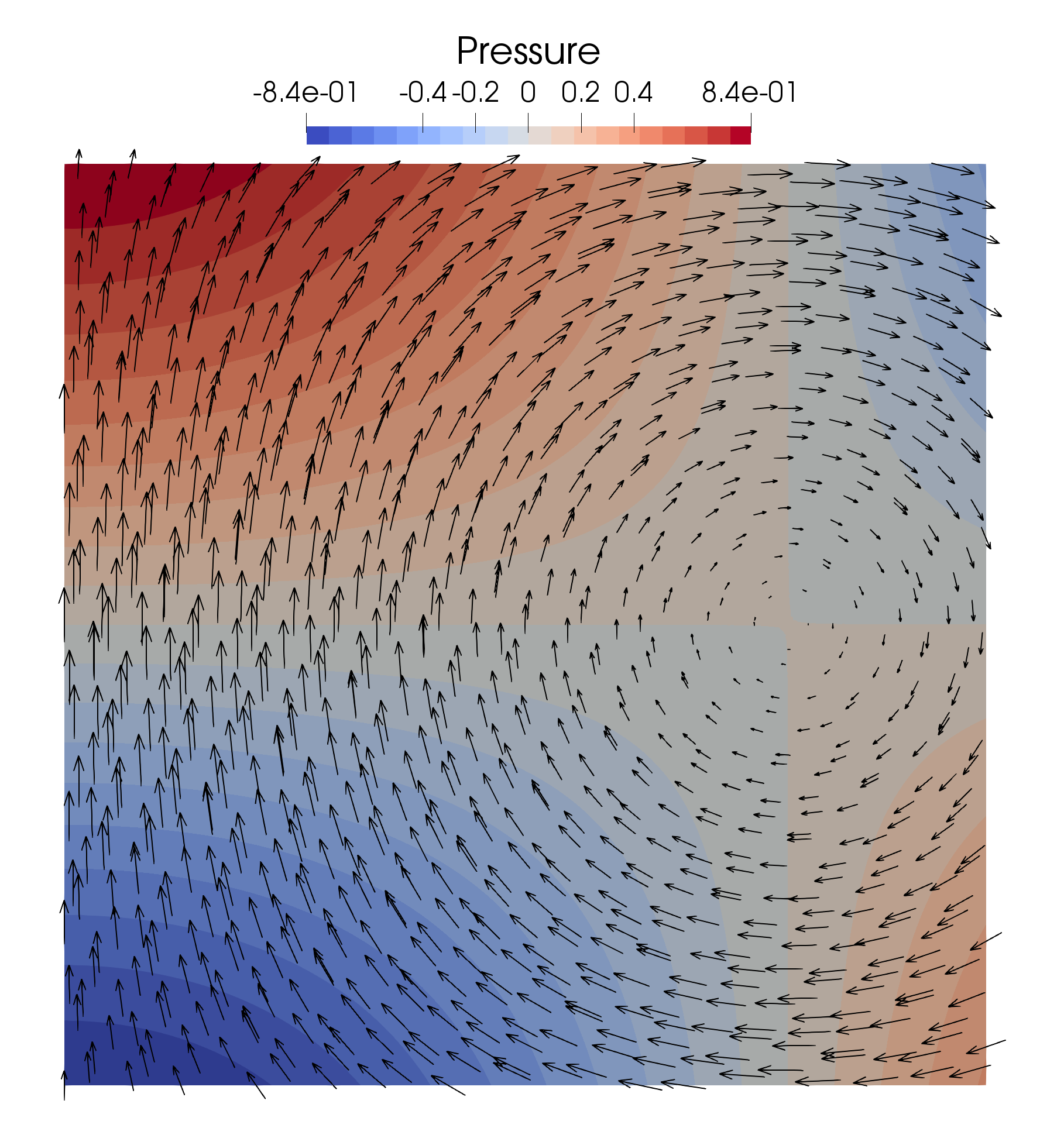}
  \caption{Solution \eqref{eq:analytical.u.p} with $(\mu,\nu)=(1,0)$ (Stokes problem). Arrows indicate orientation and magnitude of the velocity at a given point.\label{fig:regular:stokes}}
  \captionof{table}{Convergence results for the Stokes problem.\label{tab:test:regular:stokes}}
  \begin{footnotesize}
    \begin{tabular}{cccccccccc}
      \toprule
      $N_{\rm dof}$  & $N_{\rm nz}$ & $\norm[\VEC{U},h]{\uvec{e}_h}$ & EOC & $\norm{\VEC{e}_h}$ & EOC & $\norm{\epsilon_h}$ & EOC & $\tau_{\rm ass}$ & $\tau_{\rm sol}$ \\
      \midrule
      \multicolumn{10}{c}{$k=1$} \\
      \midrule \\
      193        & 3456       & 1.10e-02   & --         & 6.07e-04   & --         & 1.82e-02   & --         & 6.74e-03   & 2.36e-03   \\ 
      833        & 16192      & 3.79e-03   & 1.54       & 1.09e-04   & 2.48       & 5.06e-03   & 1.85       & 1.61e-02   & 2.31e-02   \\ 
      3457       & 69696      & 1.04e-03   & 1.86       & 1.52e-05   & 2.84       & 1.32e-03   & 1.94       & 7.64e-02   & 1.33e-01   \\ 
      14081      & 288832     & 2.71e-04   & 1.94       & 1.99e-06   & 2.93       & 3.37e-04   & 1.96       & 2.32e-01   & 1.68e+00   \\ 
      56833      & 1175616    & 6.98e-05   & 1.96       & 2.56e-07   & 2.96       & 8.53e-05   & 1.98       & 8.35e-01   & 4.41e+01   \\ 
      \midrule
      \multicolumn{10}{c}{$k=2$} \\
      \midrule \\
      273        & 7216       & 1.38e-03   & --         & 4.97e-05   & --         & 1.70e-03   & --         & 9.99e-03   & 2.82e-03   \\ 
      1185       & 34000      & 1.95e-04   & 2.83       & 3.47e-06   & 3.84       & 2.39e-04   & 2.83       & 4.15e-02   & 3.44e-02   \\ 
      4929       & 146704     & 2.74e-05   & 2.83       & 2.39e-07   & 3.86       & 3.06e-05   & 2.96       & 2.38e-01   & 2.09e-01   \\ 
      20097      & 608656     & 3.58e-06   & 2.94       & 1.55e-08   & 3.94       & 3.90e-06   & 2.97       & 4.52e-01   & 3.11e+00   \\ 
      81153      & 2478736    & 4.50e-07   & 2.99       & 9.77e-10   & 3.99       & 4.90e-07   & 2.99       & 1.74e+00   & 6.17e+01   \\
      \midrule
      \multicolumn{10}{c}{$k=3$} \\
      \midrule \\
      353        & 12352      & 1.17e-04   & --         & 3.38e-06   & --         & 1.51e-04   & --         & 1.78e-02   & 4.03e-03   \\ 
      1537       & 58368      & 8.48e-06   & 3.79       & 1.26e-07   & 4.74       & 1.07e-05   & 3.83       & 7.66e-02   & 4.63e-02   \\ 
      6401       & 252160     & 5.43e-07   & 3.96       & 4.01e-09   & 4.98       & 6.70e-07   & 3.99       & 2.58e-01   & 4.51e-01   \\ 
      26113      & 1046784    & 3.45e-08   & 3.98       & 1.28e-10   & 4.97       & 4.24e-08   & 3.98       & 9.33e-01   & 5.87e+00   \\ 
      105473     & 4264192    & 2.18e-09   & 3.99       & 4.04e-12   & 4.99       & 2.66e-09   & 3.99       & 3.63e+00   & 9.27e+01   \\
      \midrule
      \multicolumn{10}{c}{$k=4$} \\
      \midrule \\
      433        & 18864      & 7.92e-06   & --         & 2.20e-07   & --         & 7.41e-06   & --         & 3.28e-02   & 1.26e-02   \\ 
      1889       & 89296      & 2.45e-07   & 5.02       & 3.39e-09   & 6.02       & 2.39e-07   & 4.96       & 1.38e-01   & 4.53e-02   \\ 
      7873       & 386064     & 7.87e-09   & 4.96       & 5.43e-11   & 5.96       & 7.68e-09   & 4.96       & 4.54e-01   & 8.91e-01   \\ 
      32129      & 1603216    & 2.46e-10   & 5.00       & 8.43e-13   & 6.01       & 2.41e-10   & 4.99       & 1.92e+00   & 9.18e+00   \\ 
      129793     & 6531984    & 1.18e-10   & $1.07^*$   & 1.93e-13   & $2.13^*$   & 3.43e-11   & $2.81^*$   & 7.61e+00   & 1.37e+02   \\ 
      \bottomrule
    \end{tabular}
  \end{footnotesize}
\end{figure}

\subsection{Convergence for the Darcy problem with spatially varying permeability}
\label{sec:numerical.examples:darcy}

It is clear from the analysis that the key feature to handle all the possible regimes is the robustness in the singular limit corresponding to the Darcy problem.
In this section we further investigate the numerical performance of the proposed method in this case considering:
\begin{inparaenum}[(a)] 
\item a smooth spatially varying coefficient $\nu$ over $\Omega$, 
\item a piecewise constant discontinuous coefficient $\nu$ over $\Omega$. 
\end{inparaenum}

As for case (a), setting $\Omega=(0,3\pi)\times(0,2\pi)$, we consider the exact solution originally proposed in \cite{Philips:86} which, for a fixed value of the parameter $\alpha\in(0,1]$, corresponds to
\begin{equation}\label{eq:philip}
  \VEC{u}(\VEC{x})=
  \begin{pmatrix}
  -1 - \alpha\sin(x_1)\cos(x_2) \\
  \alpha\cos x_1\sin x_2
  \end{pmatrix}
\end{equation}
with 
$$
\mu=0,\qquad
\nu(\VEC{x})=( 1 + 2\alpha\sin x_1\cos x_2 + \alpha^2\cos^2 x_2)^{-1}.
$$
In what follows, we take $(1-\alpha)^2=10^{-3}$ which implies a variation of $\nu$ spanning three orders of magnitude: indeed the permeability ranges from  $(1-\alpha)^{-2}$ to $(1+\alpha)^{-2}$ over $\Omega$.
An analytical expression for the pressure is not available in this case.
The numerical solution computed with a $k=4$ degree discretization on the finest grid is represented in Figure \ref{fig:numerical.examples:darcy:heterogeneous:solution}.

As for case (b), setting $\Omega=(-1,1) \times (-1,1)$, we rely on the exact solution originally proposed in \cite{Kellogg:74} where, for a given value of the permeability jump $\frac{\nu_{1}}{\nu_{2}}$ 
over the horizontal and vertical center-lines, the authors provide a means to compute the parameters $ \gamma, \rho, \sigma$ such that 
the exact pressure reads 
\begin{equation}
\label{eq:kellog} 
p(r,\theta)= r^\gamma s(\theta), 
\end{equation}
with $\tan(\theta) = \frac{y}{x}, r = \sqrt{x^2 + y^2}$ and
\begin{equation*} 
  s(\theta)= \begin{cases}
  \cos((\pi/2 - \sigma) \gamma) \; \cos((\theta-\pi/2+\rho)\gamma)& \rm{if} \; 0 \leq \theta \leq \pi/2, \\
  \cos( \rho \gamma) \; \cos((\theta-\pi+\sigma)\gamma)&             \rm{if} \; \pi/2 \leq \theta \leq pi, \\
  \cos(\sigma \gamma) \; \cos((\theta-\pi-\rho)\gamma)&              \rm{if} \; \pi \leq \theta \leq 3\pi/2, \\
  \cos((\pi/2 - \rho) \gamma) \; \cos((\theta-3\pi/2-\sigma)\gamma)&  \rm{if} \; 3\pi/2 \leq \theta \leq 2\pi. \\
  \end{cases}
\end{equation*} 
In what follows, we take $\frac{\nu_{1}}{\nu_{2}}=100$, leading to $\gamma = 0.1269020697222, \, \rho=\pi/4$ and $\sigma=-11.5926215980874$.
Neumann boundary conditions are enforced according to the exact solution. 
Computational meshes are compatible with the distribution of $\nu$ over $\Omega$, 
that is $\nu_1 = 100$ in the first and third quadrant and $\nu_2 = 1$ in the second and fourth quadrant, in order to ensure that permeability jumps do not occur inside mesh elements.
The solution is singular at the origin, and its regularity depends on the parameter $\gamma$, namely $p \in H^{1+\gamma}(\Omega)$. 
Accordingly, the expected convergence rate for the velocity and the pressure in $L^2$-norm is $\gamma$ and $2 \gamma$, respectively.
The numerical solution computed with $k=4$ on the finest grid is represented in Figure \ref{fig:numerical.examples:darcy:discontinuous:solution}.

The convergence results on a sequence of refined triangular meshes for polynomial degrees $k\in\{0,\ldots,4\}$ are collected in Table \ref{tab:test:philip} and Table \ref{tab:test:kellog} for case (a) and (b), respectively.
The columns have the same meaning as in the previous section, except for the fact that, for the case (a), the pressure errors and the corresponding estimated orders of convergence are not displayed.
For case (a), it can be seen that the predicted asymptotic orders of convergence are matched or exceeded.
Notice that we had to increase the degree of exactness of the quadrature rule in this case to account for the fact that the coefficient $\nu$ varies inside the elements.
This is crucial for obtaining the expected convergence rates for $k > 1$.
For case (b), pressure and velocity convergence rates are slightly suboptimal at the highest polynomial degree, a consequence of the well known Runge phenomenon, but 
in line with results obtained in \cite{Stephansen:07} by means of a SWIP dG discretization. Remarkably, both pressure and velocity errors decrease when increasing the polynomial degree.

\begin{figure}\centering
  \includegraphics[height=5.50cm]{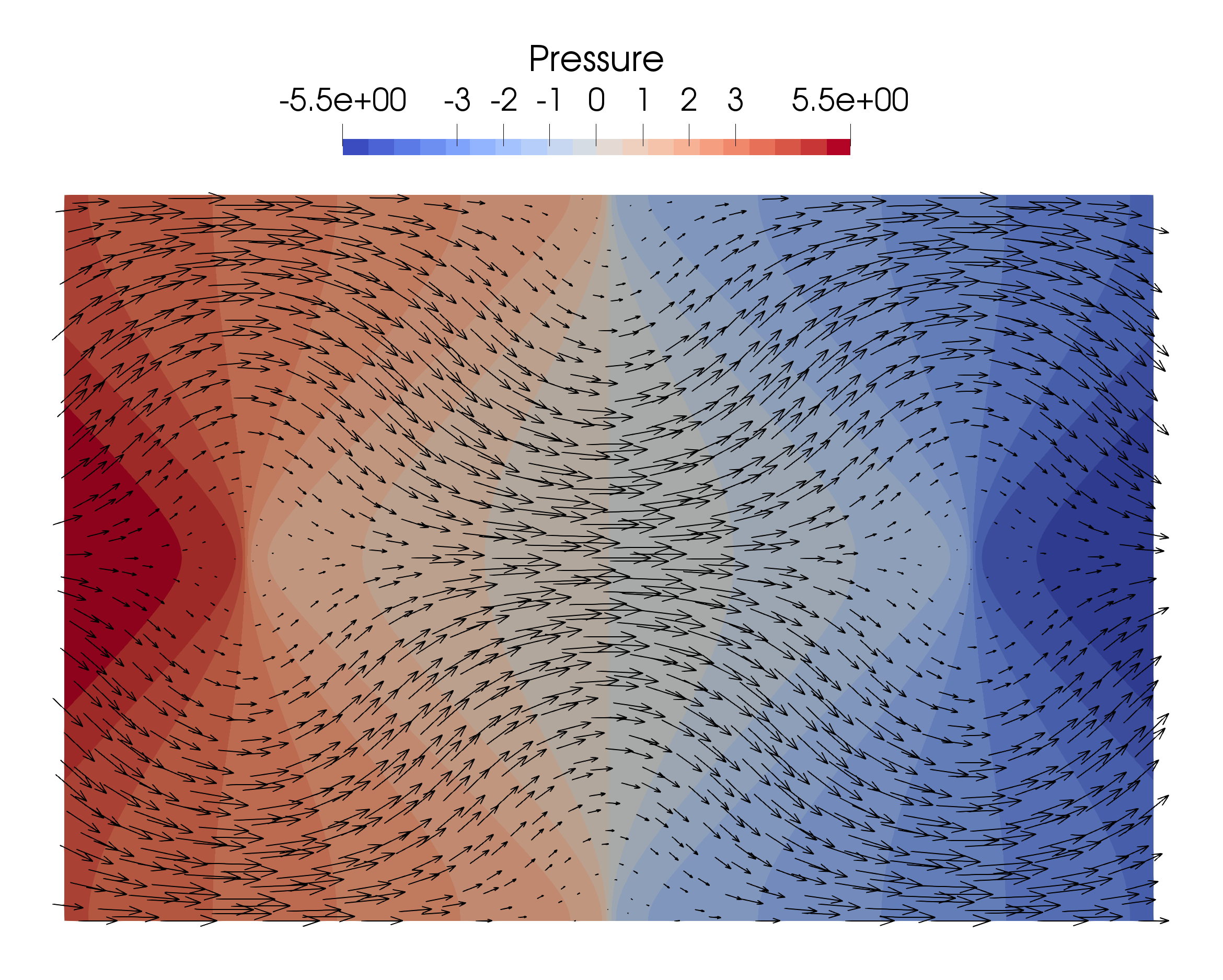}
  \caption{Solution \eqref{eq:philip} (Darcy problem with variable permeability). Arrows indicate orientation and magnitude of the velocity at a given point.\label{fig:numerical.examples:darcy:heterogeneous:solution}}
  \captionof{table}{Convergence results for the Darcy problem with variable permeability. \label{tab:test:philip}}
  \begin{footnotesize}
    \begin{tabular}{cccccccc}
      \toprule
      $N_{\rm dof}$  & $N_{\rm nz}$ & $\norm[\VEC{U},h]{\uvec{e}_h}$ & EOC & $\norm{\VEC{e}_h}$ & EOC & $\tau_{\rm ass}$ & $\tau_{\rm sol}$ \\
      \midrule
      \multicolumn{8}{c}{$k=0$} \\
      \midrule \\
      173        & 1688       & 1.54e+01   & --         & 3.68e+00   & --       & 4.21e-03   & 1.61e-03   \\ 
      729        & 7584       & 7.59e+00   & 1.03       & 2.10e+00   & 0.81     & 1.99e-02   & 1.10e-02   \\ 
      2993       & 32048      & 2.30e+00   & 1.72       & 1.08e+00   & 0.95     & 8.56e-02   & 1.64e-01   \\ 
      12129      & 131664     & 8.04e-01   & 1.51       & 5.02e-01   & 1.11     & 2.00e-01   & 2.90e+00   \\ 
      48833      & 533648     & 3.13e-01   & 1.36       & 2.23e-01   & 1.17     & 7.38e-01   & 1.28e+02   \\
      \midrule
      \multicolumn{8}{c}{$k=1$} \\
      \midrule \\
      297        & 5472       & 4.79e+01   & --         & 2.42e+00   & --       & 8.49e-03   & 3.23e-03   \\ 
      1265       & 24896      & 1.06e+01   & 2.18       & 6.64e-01   & 1.87     & 4.68e-02   & 3.01e-02   \\ 
      5217       & 105792     & 2.26e+00   & 2.23       & 1.27e-01   & 2.39     & 1.36e-01   & 2.57e-01   \\ 
      21185      & 435776     & 3.79e-01   & 2.58       & 1.59e-02   & 2.99     & 4.29e-01   & 5.28e+00   \\ 
      85377      & 1768512    & 4.58e-02   & 3.05       & 2.15e-03   & 2.89     & 1.73e+00   & 1.85e+02   \\ 
      \midrule
      \multicolumn{8}{c}{$k=2$} \\
      \midrule \\
      421        & 11448      & 1.66e+01   & --         & 5.69e-01   & --       & 2.39e-02   & 5.40e-03   \\ 
      1801       & 52320      & 8.73e-01   & 4.25       & 5.60e-02   & 3.34     & 9.45e-02   & 6.97e-02   \\ 
      7441       & 222768     & 6.72e-02   & 3.70       & 5.29e-03   & 3.40     & 2.82e-01   & 5.16e-01   \\ 
      30241      & 918480     & 4.68e-03   & 3.84       & 5.85e-04   & 3.18     & 1.04e+00   & 8.94e+00   \\ 
      121921     & 3729168    & 4.15e-04   & 3.50       & 6.88e-05   & 3.09     & 4.13e+00   & 2.65e+02   \\
      \midrule
      \multicolumn{8}{c}{$k=3$} \\
      \midrule \\
      545        & 19616      & 3.23e+00   & --         & 9.54e-02   & --       & 5.24e-02   & 8.26e-03   \\ 
      2337       & 89856      & 1.93e-01   & 4.06       & 5.31e-03   & 4.17     & 1.59e-01   & 7.70e-02   \\ 
      9665       & 382976     & 9.29e-03   & 4.38       & 2.49e-04   & 4.42     & 5.54e-01   & 1.00e+00   \\ 
      39297      & 1579776    & 4.17e-04   & 4.48       & 1.21e-05   & 4.36     & 2.14e+00   & 1.56e+01   \\ 
      158465     & 6415616    & 1.67e-05   & 4.65       & 6.55e-07   & 4.21     & 8.35e+00   & 3.59e+02   \\ 
      \midrule
      \multicolumn{8}{c}{$k=4$} \\
      \midrule \\
      669        & 29976      & 9.15e-01   & --         & 1.49e-02   & --       & 1.16e-01   & 9.69e-03   \\ 
      2873       & 137504     & 3.33e-02   & 4.78       & 4.92e-04   & 4.92     & 3.08e-01   & 1.09e-01   \\ 
      11889      & 586416     & 2.13e-04   & 7.29       & 1.05e-05   & 5.55     & 1.19e+00   & 1.65e+00   \\ 
      48353      & 2419664    & 1.28e-05   & 4.06       & 3.34e-07   & 4.97     & 4.91e+00   & 2.68e+01   \\ 
      195009     & 9827856    & 1.20e-07   & 6.74       & 1.00e-08   & 5.06     & 1.87e+01   & 4.89e+02   \\ 
      \bottomrule
    \end{tabular}
  \end{footnotesize}
\end{figure}

\begin{figure}\centering
  \includegraphics[height=5.50cm]{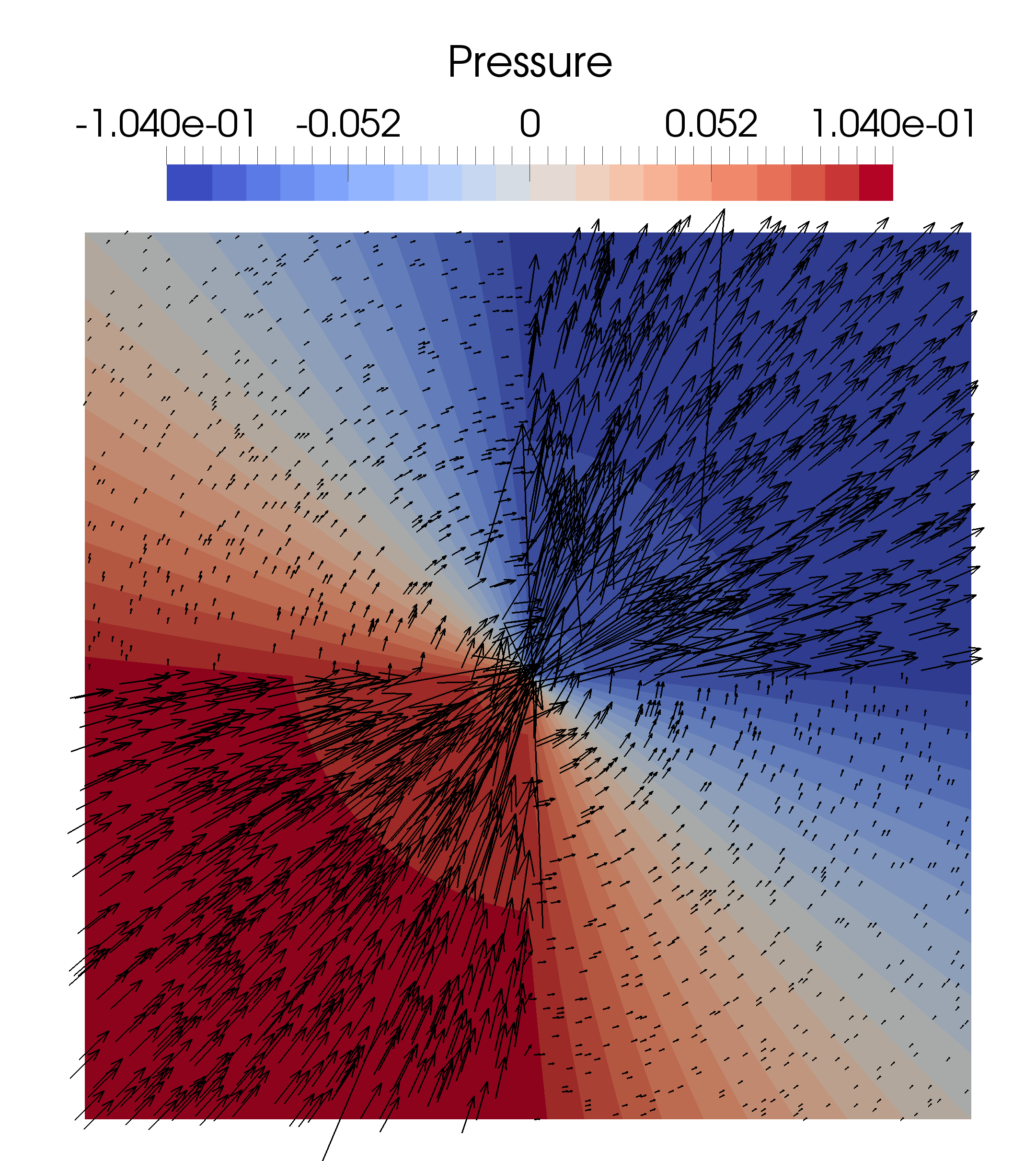}
  \caption{Solution \eqref{eq:kellog} (Darcy problem with discontinuous permeability). 
  Arrows indicate orientation and magnitude of the velocity at a given point.\label{fig:numerical.examples:darcy:discontinuous:solution}}
  \captionof{table}{Convergence results for the Darcy problem with variable permeability. \label{tab:test:kellog}}
  \begin{footnotesize}
    \begin{tabular}{cccccccc}
      \toprule
      $N_{\rm dof}$  & $N_{\rm nz}$ & $\norm[\VEC{U},h]{\uvec{e}_h}$ & EOC & $\norm{\VEC{e}_h}$ & EOC & $\norm{\epsilon_h}$ & EOC  \\
      \midrule
      \multicolumn{8}{c}{$k=0$} \\
      \midrule \\
      113        & 1440       & 1.12e+00   & --         & 2.62e-01   & --         & 4.30e-02   & --   \\ 
      481        & 5696       & 1.03e+00   & 0.13       & 2.44e-01   & 0.11       & 2.58e-02   & 0.74 \\ 
      1985       & 22656      & 9.36e-01   & 0.13       & 2.22e-01   & 0.13       & 1.49e-02   & 0.79 \\ 
      8065       & 90368      & 8.55e-01   & 0.13       & 2.04e-01   & 0.12       & 9.51e-03   & 0.65 \\ 
      32513      & 360960     & 7.81e-01   & 0.13       & 1.89e-01   & 0.11       & 7.29e-03   & 0.38 \\ 
      \midrule
      \multicolumn{8}{c}{$k=1$} \\
      \midrule \\
      193        & 4800       & 1.37e+00   & --         & 2.52e-01   & --         & 1.24e-02   & --   \\ 
      833        & 18944      & 1.24e+00   & 0.13       & 2.35e-01   & 0.10       & 8.97e-03   & 0.46 \\ 
      3457       & 75264      & 1.14e+00   & 0.13       & 2.19e-01   & 0.10       & 6.84e-03   & 0.39 \\ 
      14081      & 300032     & 1.05e+00   & 0.12       & 2.05e-01   & 0.10       & 5.69e-03   & 0.26 \\ 
      56833      & 1198080    & 9.67e-01   & 0.12       & 1.93e-01   & 0.09       & 4.90e-03   & 0.22 \\ 
      \midrule
      \multicolumn{8}{c}{$k=2$} \\
      \midrule \\
      273        & 10144      & 1.62e+00   & --         & 2.05e-01   & --         & 9.69e-03   & --   \\ 
      1185       & 40000      & 1.49e+00   & 0.12       & 1.92e-01   & 0.09       & 7.05e-03   & 0.46 \\ 
      4929       & 158848     & 1.38e+00   & 0.11       & 1.81e-01   & 0.09       & 5.75e-03   & 0.29 \\ 
      20097      & 633088     & 1.27e+00   & 0.11       & 1.71e-01   & 0.08       & 4.92e-03   & 0.23 \\ 
      81153      & 2527744    & 1.18e+00   & 0.11       & 1.62e-01   & 0.08       & 4.25e-03   & 0.21 \\ 
      \midrule
      \multicolumn{8}{c}{$k=3$} \\
      \midrule \\
      353        & 17472      & 1.84e+00   & --         & 1.79e-01   & --         & 7.00e-03   & --   \\ 
      1537       & 68864      & 1.70e+00   & 0.11       & 1.69e-01   & 0.08       & 5.82e-03   & 0.27 \\ 
      6401       & 273408     & 1.57e+00   & 0.11       & 1.61e-01   & 0.07       & 5.03e-03   & 0.21 \\ 
      26113      & 1089536    & 1.45e+00   & 0.11       & 1.53e-01   & 0.07       & 4.36e-03   & 0.20 \\ 
      105473     & 4349952    & 1.35e+00   & 0.11       & 1.46e-01   & 0.07       & 3.78e-03   & 0.21 \\ 
      \midrule
      \multicolumn{8}{c}{$k=4$} \\
      \midrule \\
      433        & 26784      & 2.05e+00   & --         & 1.62e-01   & --         & 6.24e-03   & --   \\ 
      1889       & 105536     & 1.90e+00   & 0.11       & 1.55e-01   & 0.07       & 5.33e-03   & 0.23 \\ 
      7873       & 418944     & 1.77e+00   & 0.11       & 1.48e-01   & 0.07       & 4.62e-03   & 0.21 \\ 
      32129      & 1669376    & 1.64e+00   & 0.11       & 1.41e-01   & 0.07       & 4.01e-03   & 0.21 \\ 
      129793     & 6664704    & 1.52e+00   & 0.11       & 1.35e-01   & 0.07       & 3.46e-03   & 0.21 \\ 
      \bottomrule
    \end{tabular}
  \end{footnotesize}
\end{figure}


\section{Proofs}\label{sec:proofs}

This section collects the proofs of Theorems \ref{thm:well-posedness} and \ref{thm:err.est} preceded by the required intermediate results.

\subsection{Comparison of local seminorms}

  In this section we prove a technical proposition that contains comparison results for the local Stokes and Darcy seminorms.
  \begin{proposition}[Comparison of the local Darcy and Stokes seminorms]
    Let a mesh element $T\in\Th$ be fixed.
  Recalling the definition \eqref{eq:norm.ET} of the boundary seminorm $\seminorm[1,\partial T]{{\cdot}}$, it holds for all $\uvec{v}_T\in\UT$ that
  \begin{equation}\label{eq:rDT-vT}
    \norm[T]{\rDT\uvec{v}_T - \VEC{v}_T}
    \lesssim h_T \seminorm[1,\partial T]{\uvec{v}_T}
  \end{equation}
  and
  \begin{equation}\label{eq:seminorm.DT<=seminorm.1pT}
    \seminorm[\darcy,T]{\uvec{v}_T}^2
    \lesssim(2\mu_T)\Cf\seminorm[1,\partial T]{\uvec{v}_T}^2
    \mbox{ where }
    \seminorm[\darcy,T]{\uvec{v}_T}\coloneq\mathrm{s}_{\darcy,T}(\uvec{v}_T,\uvec{v}_T)^{\frac12}.
  \end{equation}
  Moreover,
  \begin{equation}\label{eq:norm.ET<=norm.DT}
    (2\mu_T)\norm[\strain,T]{\uvec{v}_T}^2\lesssim\Cf^{-1}\norm[\darcy,T]{\uvec{v}_T}^2\mbox{ if }\nu_T>0.
  \end{equation}
\end{proposition}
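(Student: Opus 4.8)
The plan is to prove the three bounds in the order in which they are stated, the first being the crux and the other two following by combining it with classical functional inequalities on regular simplicial meshes (boundedness of $L^2$-projectors, the discrete trace inequality $\norm[F]{w}\lesssim h_F^{-1/2}\norm[T]{w}$ for $w$ a polynomial on $T$, the inverse inequality) together with the mesh-regularity relation $h_F\simeq h_T$ valid for all $F\in\Fh[T]$.

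\textbf{Proof of \eqref{eq:rDT-vT}.} The starting observation is that $\VEC{z}_T\coloneq\rDT\uvec{v}_T-\VEC{v}_T$ lies in $\RTN(T)$: indeed $\VEC{v}_T\in\Poly{l}(T)^d$ with $l=\max(k-1,1)\le k$, so $\VEC{v}_T\in\Poly{k-1}(T)^d\subset\RTN(T)$ when $k\ge2$ and $\VEC{v}_T\in\Poly{1}(T)^d\subset\RTN(T)$ when $k=1$. By \eqref{eq:rDT:T} we have $\vlproj[T]{k-1}\VEC{z}_T=\VEC{0}$, and by \eqref{eq:rDT:F}, since $\VEC{v}_{T|F}\SCAL\normal_{TF}\in\Poly{l}(F)\subset\Poly{k}(F)$, the normal trace of $\VEC{z}_T$ on each $F\in\Fh[T]$ is exactly $(\VEC{v}_F-\VEC{v}_T)\SCAL\normal_{TF}$. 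On the subspace of $\RTN(T)$ whose elements have vanishing $\Poly{k-1}(T)^d$-moments, the map sending a field to the tuple of its face normal traces is injective by unisolvence of the Raviart--Thomas--N\'ed\'elec degrees of freedom; a scaling argument to a reference simplex (on which all norms on this finite-dimensional space are equivalent) therefore gives $\norm[T]{\VEC{z}_T}^2\lesssim\sum_{F\in\Fh[T]}h_F\norm[F]{\VEC{z}_T\SCAL\normal_{TF}}^2$. Bounding $\norm[F]{\VEC{z}_T\SCAL\normal_{TF}}\le\norm[F]{\VEC{v}_F-\VEC{v}_T}$ and using $h_F\simeq h_T\simeq h_T^2h_F^{-1}$ from mesh regularity yields $\norm[T]{\VEC{z}_T}^2\lesssim h_T^2\sum_{F\in\Fh[T]}h_F^{-1}\norm[F]{\VEC{v}_F-\VEC{v}_T}^2=h_T^2\seminorm[1,\partial T]{\uvec{v}_T}^2$.

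\textbf{Proof of \eqref{eq:seminorm.DT<=seminorm.1pT}.} Since $(2\mu_T)\Cf=\nu_Th_T^2$, the target is $\mathrm{s}_{\darcy,T}(\uvec{v}_T,\uvec{v}_T)\lesssim\nu_Th_T^2\seminorm[1,\partial T]{\uvec{v}_T}^2$. The element term is handled by $L^2$-boundedness and \eqref{eq:rDT-vT}: $\nu_T\norm[T]{\dDT\uvec{v}_T}^2\le\nu_T\norm[T]{\rDT\uvec{v}_T-\VEC{v}_T}^2\lesssim\nu_Th_T^2\seminorm[1,\partial T]{\uvec{v}_T}^2$. For a face term, split $\rDT\uvec{v}_T-\VEC{v}_F=(\rDT\uvec{v}_T-\VEC{v}_T)+(\VEC{v}_T-\VEC{v}_F)$; applying $\vlproj[F]{k}$ and the triangle inequality, the first contribution is bounded via the discrete trace inequality (using $\rDT\uvec{v}_T-\VEC{v}_T\in\Poly{k+1}(T)^d$) and \eqref{eq:rDT-vT} by $h_F^{-1/2}\norm[T]{\rDT\uvec{v}_T-\VEC{v}_T}\lesssim h_F^{-1/2}h_T\seminorm[1,\partial T]{\uvec{v}_T}$, and the second simply by $\norm[F]{\VEC{v}_T-\VEC{v}_F}$. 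Multiplying $\norm[F]{\dDTF\uvec{v}_T}^2$ by $\nu_Th_F$, using $h_F\simeq h_T$ and summing over the uniformly bounded number of faces then closes the estimate.

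\textbf{Proof of \eqref{eq:norm.ET<=norm.DT} and the main obstacle.} Assume $\nu_T>0$; since $\Cf^{-1}=\frac{2\mu_T}{\nu_Th_T^2}$, the target is $\nu_Th_T^2\norm[\strain,T]{\uvec{v}_T}^2\lesssim\norm[\darcy,T]{\uvec{v}_T}^2$, and it suffices to control $\nu_Th_T^2\norm[T]{\GRADs\VEC{v}_T}^2$ and $\nu_Th_T^2\seminorm[1,\partial T]{\uvec{v}_T}^2$ separately. First, $\norm[T]{\VEC{v}_T}$ is controlled by the Darcy norm: if $k\ge2$ then $\vlproj[T]{l}\rDT\uvec{v}_T=\VEC{v}_T$ by Remark \ref{rem:rDT}, so $\norm[T]{\VEC{v}_T}\le\norm[T]{\rDT\uvec{v}_T}$, while if $k=1$ one writes $\VEC{v}_T=\vlproj[T]{l}\rDT\uvec{v}_T-\dDT\uvec{v}_T$, so $\norm[T]{\VEC{v}_T}\le\norm[T]{\rDT\uvec{v}_T}+\norm[T]{\dDT\uvec{v}_T}$; in both cases $\nu_T\norm[T]{\VEC{v}_T}^2\lesssim\norm[\darcy,T]{\uvec{v}_T}^2$. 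The inverse inequality $\norm[T]{\GRADs\VEC{v}_T}\lesssim h_T^{-1}\norm[T]{\VEC{v}_T}$ then disposes of the gradient term. For the boundary seminorm, on each $F\in\Fh[T]$ use $\vlproj[F]{k}\VEC{v}_F=\VEC{v}_F$ and $\vlproj[F]{k}\VEC{v}_{T|F}=\VEC{v}_{T|F}$ to write $\VEC{v}_F-\VEC{v}_T=\vlproj[F]{k}(\rDT\uvec{v}_T-\VEC{v}_T)-\dDTF\uvec{v}_T$: the first term is bounded by the discrete trace inequality and the previous step by $h_F^{-1/2}\norm[T]{\rDT\uvec{v}_T-\VEC{v}_T}\lesssim h_F^{-1/2}(\norm[T]{\rDT\uvec{v}_T}+\norm[T]{\dDT\uvec{v}_T})$, and for $F\in\Fhi[T]$ the term $\norm[F]{\dDTF\uvec{v}_T}$ is controlled directly by the boundary part of $\mathrm{s}_{\darcy,T}$; multiplying $h_F^{-1}\norm[F]{\VEC{v}_F-\VEC{v}_T}^2$ by $\nu_Th_T^2$ and using $h_F\simeq h_T$ bounds the interior-face contributions by $\norm[\darcy,T]{\uvec{v}_T}^2$, while the boundary-face contributions reduce (the boundary velocity unknowns being zero for the discrete fields to which the estimate is applied) to $h_F^{-1}\norm[F]{\VEC{v}_T}^2\lesssim h_F^{-2}\norm[T]{\VEC{v}_T}^2$, again controlled after multiplication by $\nu_Th_T^2$. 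The main obstacle is precisely this last bookkeeping step: one has to keep careful track of which discrete unknowns are ``seen'' by $\norm[\darcy,T]{{\cdot}}$ — the element unknown $\VEC{v}_T$ through $\rDT\uvec{v}_T$ and $\dDT\uvec{v}_T$, the interior-face unknowns through $\dDTF\uvec{v}_T$ — and to split the argument between $k=1$ (where $\dDT\uvec{v}_T$ need not vanish) and $k\ge2$ (where $\dDT\uvec{v}_T=\VEC{0}$ and $\vlproj[T]{l}\rDT\uvec{v}_T=\VEC{v}_T$); everything else rests on the scaled Raviart--Thomas--N\'ed\'elec norm equivalence of \eqref{eq:rDT-vT} and standard discrete inequalities.
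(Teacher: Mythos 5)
Your proof is correct and follows essentially the same route as the paper's: the scaled Raviart--Thomas--N\'ed\'elec norm equivalence plus the characterisation of $\rDT$ for \eqref{eq:rDT-vT}, and the same projector-boundedness/discrete-trace/inverse-inequality machinery for \eqref{eq:seminorm.DT<=seminorm.1pT} and \eqref{eq:norm.ET<=norm.DT}; for the last estimate you merely reorganise the algebra (controlling $\norm[T]{\VEC{v}_T}$ first via $\VEC{v}_T=\vlproj[T]{l}\rDT\uvec{v}_T-\dDT\uvec{v}_T$ and then using a two-term split of $\VEC{v}_F-\VEC{v}_T$, where the paper uses a three-term split), which is equivalent. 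One point where you genuinely improve on the paper: you notice that $\mathrm{s}_{\darcy,T}$ penalises only the faces in $\Fhi[T]$, so the boundary-face contributions to $\seminorm[1,\partial T]{\uvec{v}_T}$ are not controlled by $\norm[\darcy,T]{\uvec{v}_T}$ for arbitrary $\uvec{v}_T\in\UT$ (take $\VEC{v}_T=\VEC{0}$, all interior face unknowns zero, and a purely tangential nonzero unknown on a boundary face: then $\norm[\darcy,T]{\uvec{v}_T}=0$ while $\seminorm[1,\partial T]{\uvec{v}_T}>0$), and you correctly restrict \eqref{eq:norm.ET<=norm.DT} to fields whose boundary-face unknowns vanish, which is exactly the setting ($\uvec{v}_h\in\UhD$) in which the paper actually invokes it; the paper's own proof sums over all $F\in\Fh[T]$ and silently absorbs the boundary faces into $\norm[\darcy,T]{\uvec{v}_T}$.
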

\begin{proof}
  (i) \emph{Proof of \eqref{eq:rDT-vT}.}
   We apply the estimate
   $
  \norm[T]{\VEC{\varphi}_T}
  \lesssim\left(
  \norm[T]{\vlproj[T]{k-1}\VEC{\varphi}_T}^2
  + \sum_{F\in\Fh[T]}h_F\norm[F]{\VEC{\varphi}_T\SCAL\normal_{TF}}^2
  \right)^{\frac12}
  $
  valid for any function $\VEC{\varphi}_T\in\RTN(T)$ to $\VEC{\varphi}_T=\rDT\uvec{v}_T-\VEC{v}_T$ to infer
  $$
  \begin{aligned}
    \norm[T]{\rDT\uvec{v}_T-\VEC{v}_T}
    &\lesssim\left(
    \sum_{F\in\Fh[T]}h_F\norm[F]{(\rDT\uvec{v}_T-\VEC{v}_T)\SCAL\normal_{TF}}^2
    \right)^{\frac12}
    \\
    &=\left(
    \sum_{F\in\Fh[T]}h_F\norm[F]{(\VEC{v}_F-\VEC{v}_T)\SCAL\normal_{TF}}^2
    \right)^{\frac12}
    \le h_T\seminorm[1,\partial T]{\uvec{v}_T},
  \end{aligned}
  $$
  where we have used the characterisation \eqref{eq:rDT.bis} of the local Darcy velocity reconstruction in the first two passages and the fact that $h_F\le h_T$ for all $F\in\Fh[T]$ to conclude.
  \medskip

  (ii) \emph{Proof of \eqref{eq:seminorm.DT<=seminorm.1pT}.}
  The volumetric term in $\seminorm[\darcy,T]{{\cdot}}$ is zero if $k\ge 2$ (see \eqref{eq:dDT=0.k>=2}).
  If $k=1$, on the other hand, we can write
  \begin{equation}\label{eq:norm.dDT<=seminorm.1pT.k=1}
    \nu_T\norm[T]{\dDT[1]\uvec{v}_T}^2
    =\nu_T\norm[T]{\vlproj[T]{1}(\rDT[1]\uvec{v}_T-\VEC{v}_T)}^2
    \le\nu_T\norm[T]{\rDT[1]\uvec{v}_T-\VEC{v}_T}^2
    \lesssim (2\mu_T)\Cf\seminorm[1,\partial T]{\uvec{v}_T}^2,
  \end{equation}
  where we have used the definition \eqref{eq:dDT.dDTF} of $\dDT[1]$ in the equality, the boundedness of $\vlproj[T]{1}$ expressed by \eqref{eq:lproj:boundedness} with $X=T$, $\ell=1$, and $s=0$ in the first bound, and \eqref{eq:rDT-vT} together with the definition \eqref{eq:Cf} of the local friction coefficient to conclude.
  
  On the other hand, for all $F\in\Fh[T]$ we have that
  $$
  \norm[F]{\dDTF\uvec{v}_T}
  = \norm[F]{\vlproj[F]{k}(\rDT\uvec{v}_T-\VEC{v}_F)}
  \le\norm[F]{\rDT\uvec{v}_T-\VEC{v}_F}
  \lesssim h_F^{-\frac12}\norm[T]{\rDT\uvec{v}_T-\VEC{v}_T} + \norm[F]{\VEC{v}_F-\VEC{v}_T},
  $$
  where we have used the definition \eqref{eq:dDT.dDTF} of $\dDTF$ in the equality, invoked the boundedness of $\vlproj[F]{k}$ expressed by \eqref{eq:lproj:boundedness} with $X=F$, $\ell=k$, and $s=0$ in the first bound, inserted $\pm\VEC{v}_T$ into the norm and used the triangle inequality together with a discrete trace inequality (see, e.g., \cite[Lemma 1.46]{Di-Pietro.Ern:12}) to conclude.
  Squaring the above inequality, multiplying it by $\nu_Th_F$, summing over $F\in\Fh[T]$, and recalling the definition \eqref{eq:Cf} of the local friction coefficient after observing that $h_F\le h_T$, we obtain
  $$
  \sum_{F\in\Fhi[T]}\nu_T h_F\norm[F]{\dDTF\uvec{v}_T}^2
  \lesssim\nu_T\norm[T]{\rDT\uvec{v}_T-\VEC{v}_T}^2
  + (2\mu_T)\Cf\seminorm[1,\partial T]{\uvec{v}_T}^2.
  $$
  Using \eqref{eq:rDT-vT} and again \eqref{eq:Cf} to bound the first term in the right-hand side, we arrive at
  \begin{equation}\label{eq:norm.dDTF<=seminorm.1pT}
    \sum_{F\in\Fhi[T]}\nu_T h_F\norm[F]{\dDTF\uvec{v}_T}^2
    \lesssim (2\mu_T)\Cf\seminorm[1,\partial T]{\uvec{v}_T}^2.
  \end{equation}
  Estimate \eqref{eq:norm.dDTF<=seminorm.1pT}, added to \eqref{eq:norm.dDT<=seminorm.1pT.k=1} in the case $k=1$, concludes the proof of \eqref{eq:seminorm.DT<=seminorm.1pT}.
  \medskip

  (iii) \emph{Proof of \eqref{eq:norm.ET<=norm.DT}.}
  For any $F\in\Fh[T]$, recalling the definition \eqref{eq:dDT.dDTF} of the Darcy difference operators, we have that
  $$
  \norm[F]{\VEC{v}_F-\VEC{v}_T}
  \le
  \norm[F]{\dDTF\uvec{v}_T}
  + \norm[F]{\dDT\uvec{v}_T}
  + \norm[F]{\vlproj[F]{k}\rDT\uvec{v}_T-\vlproj[T]{l}\rDT\uvec{v}_T},
  $$
  where we have inserted $\pm(\vlproj[F]{k}\rDT\uvec{v}_T-\vlproj[T]{l}\rDT\uvec{v}_T)$ into the norm and used the triangle inequality to conclude.
  Using $l\le k$ (see \eqref{eq:l}) and the idempotency, linearity, and boundedness of $\vlproj[F]{k}$ to write $\norm[F]{\vlproj[F]{k}\rDT\uvec{v}_T-\vlproj[T]{l}\rDT\uvec{v}_T}=\norm[F]{\vlproj[F]{k}(\rDT\uvec{v}_T-\vlproj[T]{l}\rDT\uvec{v}_T)}\le\norm[F]{\rDT\uvec{v}_T-\vlproj[T]{l}\rDT\uvec{v}_T}$ together with a standard discrete trace inequality, we can go on writing
  $$
  \begin{aligned}
    \norm[F]{\VEC{v}_F-\VEC{v}_T}
    &\lesssim\norm[F]{\dDTF\uvec{v}_T}
    + h_F^{-\frac12}\norm[T]{\dDT\uvec{v}_T}
    + h_F^{-\frac12}\norm[T]{\rDT\uvec{v}_T-\vlproj[T]{l}\rDT\uvec{v}_T}
    \\
    &\lesssim\norm[F]{\dDTF\uvec{v}_T}
    + h_F^{-\frac12}\norm[T]{\dDT\uvec{v}_T}
    + h_F^{-\frac12}\norm[T]{\rDT\uvec{v}_T},
  \end{aligned}
  $$
  where the conclusion follows using the triangle inequality in the last term together with the boundedness of $\vlproj[T]{l}$ expressed by \eqref{eq:lproj:boundedness} with $X=T$, $\ell=l$, and $s=0$.
  Raising the above inequality to the square, multiplying by $(2\mu_T)h_F^{-1}$ both sides, summing over $F\in\Fh[T]$, and using the uniform equivalence $h_F\simeq h_T$ followed by the definition \eqref{eq:Cf} of the local friction coefficient, we conclude that
  \begin{equation}\label{eq:seminorm.1pT<=norm.DT}
    (2\mu_T)\seminorm[1,\partial T]{\uvec{v}_T}^2
    \lesssim\Cf^{-1}\norm[\darcy,T]{\uvec{v}_T}^2.
  \end{equation}
  We next write
  \begin{align}
      (2\mu_T)^{\frac12}\norm[T]{\GRADs\VEC{v}_T}
      &\lesssim (2\mu_T)^{\frac12}h_T^{-1}\norm[T]{\VEC{v}_T}
      \nonumber\\
      &\le (2\mu_T)^{\frac12}h_T^{-1}\norm[T]{\rDT\uvec{v}_T-\VEC{v}_T} + \Cf^{-\frac12}\norm[T]{\nu_T^{\frac12}\rDT\uvec{v}_T}
      \nonumber\\
      &\le (2\mu_T)^{\frac12}\seminorm[1,\partial T]{\uvec{v}_T} + \Cf^{-\frac12}\norm[T]{\nu_T^{\frac12}\rDT\uvec{v}_T}
      \lesssim\Cf^{-\frac12}\norm[\darcy,T]{\uvec{v}_T},
		\label{eq:norm.Gs<=norm.DT}
  \end{align}
  where we have used a standard discrete inverse inequality (see, e.g., \cite[Section 1.7]{Ern.Guermond:04}) in the first line, we have inserted $\pm\rDT\uvec{v}_T$ into the norm and used a triangle inequality together with the definition \eqref{eq:Cf} of the local friction coefficient to pass to the second line, and we have used \eqref{eq:rDT-vT} to pass to the third line and \eqref{eq:seminorm.1pT<=norm.DT} to conclude.
  Squaring \eqref{eq:norm.Gs<=norm.DT}, summing it to \eqref{eq:seminorm.1pT<=norm.DT}, and recalling the definition \eqref{eq:norm.ET} of the local strain seminorm, the conclusion follows.
\end{proof}%

\subsection{Well-posedness}\label{sec:proofs:well-posedness}

Well-posedness classically hinges on the following inf--sup condition on $\mathrm{b}_h$.
\begin{lemma}[Stability of the velocity--pressure coupling]
  For all $q_h\in P_h^k$, it holds with $\beta$ defined by \eqref{eq:a-priori}
  \begin{equation}\label{eq:bh.stability}
    \beta\norm{q_h}\lesssim\sup_{\uvec{v}_h\in\UhD\setminus\{\uvec{0}\}}
    \frac{\mathrm{b}_h(\uvec{v}_h,q_h)}{\norm[\VEC{U},h]{\uvec{v}_h}}.
  \end{equation}
\end{lemma}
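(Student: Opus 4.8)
The plan is to prove \eqref{eq:bh.stability} by the classical Fortin-type argument: lift $q_h$ to a continuous velocity using surjectivity of the divergence, interpolate that lift into $\UhD$, use the consistency of $\mathrm{b}_h$ to reproduce $\norm{q_h}^2$, and control the resulting discrete norm through boundedness of the interpolator $\Ih$. Concretely, since $q_h\in P_h^k\subset\left\{q\in L^2(\Omega)\st\int_\Omega q=0\right\}$, the continuous inf--sup condition for the Stokes divergence (equivalently, the existence of a bounded right inverse of $\DIV$ from $H^1_0(\Omega)^d$ onto $P$; see, e.g., Bogovski\u{\i}'s construction) provides $\VEC{v}_{q_h}\in H^1_0(\Omega)^d$ with $\DIV\VEC{v}_{q_h}=-q_h$ and $\seminorm[H^1(\Omega)^d]{\VEC{v}_{q_h}}\lesssim\norm{q_h}$, the hidden constant depending only on $\Omega$. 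Set $\uvec{v}_h\coloneq\Ih\VEC{v}_{q_h}$; since $\VEC{v}_{q_h}$ vanishes on $\partial\Omega$ its face interpolates on boundary faces vanish, hence $\uvec{v}_h\in\UhD$, and the consistency property \eqref{eq:bh.consistency} gives $\mathrm{b}_h(\uvec{v}_h,q_h)=-(\DIV\VEC{v}_{q_h},q_h)=\norm{q_h}^2$.

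The core of the proof is the boundedness estimate $\norm[\VEC{U},h]{\Ih\VEC{v}}\lesssim(2\overline{\mu}+\overline{\nu})^{\frac12}\seminorm[H^1(\Omega)^d]{\VEC{v}}=\beta^{-1}\seminorm[H^1(\Omega)^d]{\VEC{v}}$, valid for any $\VEC{v}\in H^1_0(\Omega)^d$, which I would prove element by element on the decomposition \eqref{eq:norm.Uh}. For the Stokes contribution, \eqref{eq:aST.stability} reduces the task to bounding $(2\mu_T)^{\frac12}\norm[\strain,T]{\IT\VEC{v}}$: the element term satisfies $\norm[T]{\GRADs\vlproj[T]{l}\VEC{v}}\le\norm[T]{\GRAD\vlproj[T]{l}\VEC{v}}\lesssim\seminorm[H^1(T)^d]{\VEC{v}}$ by \eqref{eq:lproj:boundedness}, while for each $F\in\Fh[T]$, using $l\le k$ and the $L^2(F)$-optimality of $\vlproj[F]{k}$ one writes $\norm[F]{\vlproj[F]{k}\VEC{v}-\vlproj[T]{l}\VEC{v}}\le 2\norm[F]{\VEC{v}-\vlproj[T]{l}\VEC{v}}$ and concludes with \eqref{eq:lproj:approx.trace} ($s=1$, $m=0$) and $h_F\simeq h_T$; this yields $(2\mu_T)^{\frac12}\norm[\strain,T]{\IT\VEC{v}}\lesssim(2\overline{\mu})^{\frac12}\seminorm[H^1(T)^d]{\VEC{v}}$. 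For the Darcy contribution, \eqref{eq:rDh.Ih=IRTN} gives $\rDT\IT\VEC{v}=\IRTNT\VEC{v}$, and the standard stability of the Raviart--Thomas--N\'ed\'elec interpolator yields $\nu_T^{\frac12}\norm[T]{\IRTNT\VEC{v}}\lesssim\overline{\nu}^{\frac12}\bigl(\norm[T]{\VEC{v}}+h_T\seminorm[H^1(T)^d]{\VEC{v}}\bigr)$; for the stabilisation part, \eqref{eq:seminorm.DT<=seminorm.1pT} applied to $\IT\VEC{v}$ combined with the bound on $\seminorm[1,\partial T]{\IT\VEC{v}}$ just obtained gives $\seminorm[\darcy,T]{\IT\VEC{v}}\lesssim(2\mu_T\Cf)^{\frac12}\seminorm[H^1(T)^d]{\VEC{v}}=(\nu_T h_T^2)^{\frac12}\seminorm[H^1(T)^d]{\VEC{v}}\lesssim\overline{\nu}^{\frac12}\seminorm[H^1(T)^d]{\VEC{v}}$, since $h_T\lesssim 1$ ($\Omega$ being fixed and bounded). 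Squaring, summing over $T\in\Th$, and using the Poincar\'e inequality $\norm{\VEC{v}}\lesssim\seminorm[H^1(\Omega)^d]{\VEC{v}}$ on $H^1_0(\Omega)^d$ yields the claim.

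Combining the two steps with $\VEC{v}=\VEC{v}_{q_h}$ gives $\norm[\VEC{U},h]{\uvec{v}_h}\lesssim\beta^{-1}\norm{q_h}$, so that $\mathrm{b}_h(\uvec{v}_h,q_h)/\norm[\VEC{U},h]{\uvec{v}_h}\gtrsim\beta\norm{q_h}$, which is \eqref{eq:bh.stability} after passing to the supremum (the case $q_h=\VEC{0}$ being trivial). The step I expect to be the main obstacle is the boundedness estimate, and specifically the uniform control of the Darcy stabilisation seminorm $\seminorm[\darcy,T]{\IT\VEC{v}}$ with the sharp $\overline{\nu}^{\frac12}$ factor: this is precisely where the comparison estimate \eqref{eq:seminorm.DT<=seminorm.1pT}, the identity $2\mu_T\Cf=\nu_T h_T^2$, and the global bound $h_T\lesssim 1$ come into play. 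The continuous lifting and the Stokes part are by now routine HHO manipulations.
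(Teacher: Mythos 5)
Your proposal is correct and follows essentially the same route as the paper: a continuous divergence lifting of $q_h$, the consistency identity \eqref{eq:bh.consistency} to recover $\norm{q_h}^2$, and the boundedness $\norm[\VEC{U},h]{\Ih\VEC{v}}\lesssim\beta^{-1}\norm[H^1(\Omega)^d]{\VEC{v}}$ proved by splitting into the Stokes part (where the paper simply cites \cite[Proposition 7.1]{Di-Pietro.Droniou:17} for the estimate you derive by hand) and the Darcy part via \eqref{eq:rDh.Ih=IRTN}, the RTN interpolator stability, and the comparison \eqref{eq:seminorm.DT<=seminorm.1pT} with $2\mu_T\Cf=\nu_T h_T^2$ and $h_T\le d_\Omega$. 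The only cosmetic deviation is your use of the $H^1$ seminorm plus a Poincar\'e inequality where the paper works with the full $H^1$ norm.
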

\begin{proof}
  (i) \emph{Boundedness of $\Ih$.}
  We start by proving the following boundedness property for $\Ih$:
  For all $\VEC{v}\in H_0^1(\Omega)^d$,
  \begin{equation}\label{eq:norm.Uh.boundedness.Ih}
    \norm[\VEC{U},h]{\Ih\VEC{v}}\lesssim\beta^{-1}\norm[H^1(\Omega)^d]{\VEC{v}}.
  \end{equation}
  A straightforward adaptation of the arguments of \cite[Proposition 7.1]{Di-Pietro.Droniou:17} gives for the Stokes norm of $\Ih\VEC{v}$:
  \begin{equation}\label{eq:norm.Sh.boundedness.Ih} 
    \norm[\stokes,h]{\IT\VEC{v}}^2\lesssim(2\overline{\mu})\seminorm[H^1(\Omega)^d]{\VEC{v}}^2.
  \end{equation}
  We next estimate the Darcy norm of $\Ih\VEC{v}$.
  Recalling the definitions \eqref{eq:aDh.coercivity} and \eqref{eq:seminorm.DT<=seminorm.1pT} of the local Darcy norm $\norm[\darcy,T]{{\cdot}}$ and seminorm $\seminorm[\darcy,T]{{\cdot}}$, and replacing $\rDT\IT\VEC{v}$ by $\IRTNT\VEC{v}$ for each $T\in\Th$ (see \eqref{eq:rDh.Ih=IRTN}), we can write
  $$
  \norm[\darcy,h]{\Ih\VEC{v}}^2
  =\sum_{T\in\Th}\left(
  \nu_T\norm[T]{\IRTNT\VEC{v}}^2
  + \seminorm[\darcy,T]{\IT\VEC{v}}^2    
  \right).
  $$
  For any mesh element $T\in\Th$, using the seminorm comparison \eqref{eq:seminorm.DT<=seminorm.1pT} and recalling the definition \eqref{eq:Cf} of the local friction coefficient together with the bound $\seminorm[1,\partial T]{\IT\VEC{v}}\lesssim\seminorm[H^1(T)^d]{\VEC{v}}$ (see again \cite[Proposition 7.1]{Di-Pietro.Droniou:17}), it is readily inferred that
  $$
  \seminorm[\darcy,T]{\IT\VEC{v}}
  \lesssim\nu_T^{\frac12}h_T\seminorm[1,\partial T]{\IT\VEC{v}}
  \lesssim\overline{\nu}^{\frac12} d_\Omega\seminorm[H^1(T)^d]{\VEC{v}}
  \lesssim\overline{\nu}^{\frac12}\seminorm[H^1(T)^d]{\VEC{v}},
  $$
  with $d_\Omega$ denoting the diameter of $\Omega$.
  Hence, denoting by $\IRTN:H^1(\Omega)^d\to\RTN(\Th)$ the global Raviart--Thomas--N\'ed\'elec interpolator whose restriction to each mesh element $T\in\Th$ coincides with $\IRTNT$ (see \eqref{eq:IRTNT}), we arrive at
  \begin{equation}\label{eq:norm.Dh.boundedness.Ih}
    \norm[\darcy,h]{\Ih\VEC{v}}^2\lesssim\overline{\nu}\left(
    \norm{\IRTN\VEC{v}}^2 + \seminorm[H^1(\Omega)^d]{\VEC{v}}^2
    \right)
    \lesssim\overline{\nu}\norm[H^1(\Omega)^d]{\VEC{v}}^2,
  \end{equation}
  where we have used standard boundedness properties of $\IRTN$ (see, e.g., \cite[Lemma 4.4]{Gatica:14}) to conclude.
  Summing \eqref{eq:norm.Sh.boundedness.Ih} and \eqref{eq:norm.Dh.boundedness.Ih}, recalling the definition \eqref{eq:norm.Uh} of $\norm[\VEC{U},h]{{\cdot}}$ with $\uvec{v}_h=\Ih\VEC{v}$, and passing to the square root, \eqref{eq:norm.Uh.boundedness.Ih} follows.
  \medskip\\
  (ii) \emph{Conclusion.}
  Let $q_h\in P_h$. From the surjectivity of the continuous divergence operator from $\VEC{U}$ to $P$ (see, e.g., \cite[Section 2.2]{Girault.Raviart:86}), we infer the existence of $\VEC{v}_{q_h}\in\VEC{U}$ such that $-\DIV\VEC{v}_{q_h}=q_h$ and $\norm[H^1(\Omega)^d]{\VEC{v}_{q_h}}\lesssim\norm{q_h}$, with hidden constant depending only on $\Omega$.
  Using the above fact, we can write
  $$ 
    \norm{q_h}^2
    = -(\DIV\VEC{v}_{q_h},q_h)     
    = \mathrm{b}_h(\Ih\VEC{v}_{q_h},q_h),
  $$ 
  where we have used the consistency property \eqref{eq:bh.consistency} of $\mathrm{b}_h$ with $\VEC{w}=\VEC{v}_{q_h}$ to conclude.
  Hence, denoting by $\$$ the supremum in the right-hand side of \eqref{eq:bh.stability} and using \eqref{eq:norm.Uh.boundedness.Ih}, it holds that
  $$
  \norm{q_h}^2
  \le\$\norm[\VEC{U},h]{\Ih\VEC{v}_{q_h}}
  \lesssim\$\beta^{-1}\norm[H^1(\Omega)^d]{\VEC{v}_{q_h}}
  \lesssim\$\beta^{-1}\norm{q_h}.
  $$
  This concludes the proof.
\end{proof}
In the proof of Theorem \ref{thm:well-posedness} below, we will need the following global discrete Korn inequality that descends from \cite[Proposition 20]{Botti.Di-Pietro.ea:17} (based, in turn, on the results of \cite{Brenner:04}):
For all $\uvec{v}_h\in\UhD$, recalling the definition \eqref{eq:vh} of $\VEC{v}_h\in\Poly{l}(\Th)^d$, it holds that
\begin{equation}\label{eq:korn.h}
  \norm{\VEC{v}_h}
  + \norm[1,h]{\uvec{v}_h}\lesssim\norm[\strain,h]{\uvec{v}_h}
  \mbox{ where }
  \norm[1,h]{\uvec{v}_h}^2\coloneq\sum_{T\in\Th}\left(
  \norm[T]{\GRAD\VEC{v}_T}^2 + \seminorm[1,\partial T]{\uvec{v}_T}^2
  \right).
\end{equation}
We are now ready to prove well-posedness.
\begin{proof}[Proof of Theorem \ref{thm:well-posedness}]
  The bilinear form $\mathrm{a}_h$ is coercive and bounded (with coercivity and boundedness constants equal to $1$) on $\UhD$ equipped with the $\norm[\VEC{U},h]{{\cdot}}$-norm (see \eqref{eq:norm.S.strain.h}, and \eqref{eq:aDh.coercivity}), and the bilinear form $\mathrm{b}_h$ is inf--sup stable on this space (see \eqref{eq:bh.stability}).
  Hence, using the fact that $\UhD$ is finite-dimensional, we can apply \cite[Theorem 2.34]{Ern.Guermond:04} to infer that problem \eqref{eq:discrete} is well-posed and that the following a priori bound holds:
  \begin{equation}\label{eq:a-priori.basic}
    \norm[\VEC{U},h]{\uvec{u}_h}
    + \beta\norm{p_h}
    \lesssim \norm[\VEC{U}^*,h]{\mathfrak{f}} + \beta^{-1}\norm{g},
  \end{equation}
  where we have denoted by $\mathfrak{f}$ the linear functional on $\UhD$ such that, for all $\uvec{v}_h\in\UhD$, $\langle\mathfrak{f},\uvec{v}_h\rangle \coloneq (\VEC{f},\rDh\uvec{v}_h)$.
  We next observe that, for all $T\in\Th$,
  $$
  \begin{aligned}
    (\VEC{f},\rDT\uvec{v}_T)_T
    &=(\VEC{f},\VEC{v}_T)_T + (\VEC{f},\rDT\uvec{v}_T-\VEC{v}_T)_T
    \\
    &\le\norm[T]{\VEC{f}}\left(
    \norm[T]{\VEC{v}_T} + \norm[T]{\rDT\uvec{v}_T-\VEC{v}_T}
    \right)
    \\
    &\lesssim\norm[T]{\VEC{f}}\left(
    \norm[T]{\VEC{v}_T} + h_T\seminorm[1,\partial T]{\uvec{v}_T}
    \right),
  \end{aligned}
  $$
  where we have used the Cauchy--Schwarz inequality to pass to the second line and \eqref{eq:rDT-vT} to conclude.
  Using the previous bound, a discrete Cauchy--Schwarz inequality on the sum over $T\in\Th$, and the fact that, for all $T\in\Th$, $h_T\le d_\Omega$ with $d_\Omega$ denoting the diameter of $\Omega$, we arrive at
  $$
  |(\VEC{f},\rDh\uvec{v}_h)|
  \lesssim\norm{\VEC{f}}\left(
  \norm{\VEC{v}_h} + \norm[\strain,h]{\uvec{v}_h}
  \right)
  \lesssim\norm{\VEC{f}}\norm[\strain,h]{\uvec{v}_h}
  \lesssim (2\underline{\mu})^{-\frac12}\norm{\VEC{f}}\norm[\VEC{U},h]{\uvec{v}_h},
  $$
  where we have used the discrete Korn inequality \eqref{eq:korn.h} in the second bound and, to conclude, we have invoked the uniform global norm equivalence \eqref{eq:aSh.stability} together with \eqref{eq:norm.Uh} to write $\norm[\strain,h]{\uvec{v}_h}\lesssim(2\underline{\mu})^{-\frac12}\norm[\stokes,h]{\uvec{v}_h}\lesssim (2\underline{\mu})^{-\frac12}\norm[\VEC{U},h]{\uvec{v}_h}$.
  Plug this bound into the definition \eqref{eq:norm.U*h} of $\norm[\VEC{U}^*,h]{\mathfrak{f}}$ to infer \eqref{eq:a-priori}.
\end{proof}

\subsection{Convergence}\label{sec:proofs:convergence}

This section contains the proof of Theorem \ref{thm:err.est} preceded by two intermediate lemmas containing consistency results for the Stokes and Darcy bilinear forms

\subsubsection{Consistency of the Stokes bilinear form}

\begin{lemma}[Consistency of the Stokes bilinear form]
  It holds for all $\VEC{w}\in H^1(\Omega)^d\cap H^{k+2}(\Th)^d$ such that $\DIV(2\mu\GRADs\VEC{w})\in L^2(\Omega)^d$,
  \begin{equation}\label{eq:aSh.consistency}
    \sup_{\uvec{v}_h\in\UhD,\norm[\VEC{U},h]{\uvec{v}_h}=1}\left|
    \Err[\stokes,h](\VEC{w};\uvec{v}_h)
    \right|
    \lesssim\left(
    \sum_{T\in\Th}(2\mu_T) \min(1,\Cf^{-1}) h_T^{2(k+1)}\seminorm[H^{k+2}(T)^d]{\VEC{w}}^2
    \right)^{\frac12}
  \end{equation}
  with Stokes consistency error
  \begin{equation}\label{eq:err.S}
    \Err[\stokes,h](\VEC{w};\uvec{v}_h)\coloneq -(\DIV(2\mu\GRADs\VEC{w}),\VEC{v}_h) - \mathrm{a}_{\stokes,h}(\Ih\VEC{w},\uvec{v}_h).
  \end{equation}
\end{lemma}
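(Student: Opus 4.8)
The plan is to expand $\Err[\stokes,h](\VEC{w};\uvec{v}_h)$ according to its definition \eqref{eq:err.S}, rewrite the Galerkin part of $\mathrm{a}_{\stokes,h}(\Ih\VEC{w},\uvec{v}_h)$ using the identity $\rST\IT\VEC{w}=\elproj{k+1}\VEC{w}$ (see \eqref{eq:rST.IT=elproj}) together with the characterising relations \eqref{eq:rST:pde} and \eqref{eq:elproj.k+1}, and thereby reduce the consistency error to a sum of interface terms involving $\GRADs(\VEC{w}-\elproj{k+1}\VEC{w})$ plus the stabilisation contribution. Both pieces will then be bounded by Cauchy--Schwarz inequalities, the approximation properties of the strain projector, and the local seminorm comparison \eqref{eq:norm.ET<=norm.DT}; this last ingredient is what produces the cut-off factor $\min(1,\Cf^{-1})$.

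\emph{Rewriting the error.} Write $\cvec{w}_T\coloneq\elproj{k+1}\VEC{w}$ and recall that $\mu_{|T}=\mu_T$ is constant on $T$. Testing \eqref{eq:rST:pde} written for $\IT\VEC{w}$ against $\cvec{w}_T\in\Poly{k+1}(T)^d$ and using \eqref{eq:rST.IT=elproj}, the Galerkin contribution $2\mu_T(\GRADs\rST\IT\VEC{w},\GRADs\rST\uvec{v}_T)_T$ equals $-2\mu_T(\VEC{v}_T,\DIV\GRADs\cvec{w}_T)_T+2\mu_T\sum_{F\in\Fh[T]}(\VEC{v}_F,\GRADs\cvec{w}_T\normal_{TF})_F$. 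Combining with $-(\DIV(2\mu\GRADs\VEC{w}),\VEC{v}_h)$, integrating $(\VEC{v}_T,\DIV\GRADs(\VEC{w}-\cvec{w}_T))_T$ by parts, invoking the orthogonality $(\GRADs(\VEC{w}-\cvec{w}_T),\GRADs\VEC{v}_T)_T=0$ (first relation in \eqref{eq:elproj.k+1}, licit since $\VEC{v}_T\in\Poly{l}(T)^d\subset\Poly{k+1}(T)^d$ by \eqref{eq:l}), and splitting $\GRADs\cvec{w}_T=\GRADs\VEC{w}-\GRADs(\VEC{w}-\cvec{w}_T)$, one arrives at
$$
\Err[\stokes,h](\VEC{w};\uvec{v}_h)=\sum_{T\in\Th}2\mu_T\sum_{F\in\Fh[T]}(\VEC{v}_F-\VEC{v}_T,\GRADs(\VEC{w}-\cvec{w}_T)\normal_{TF})_F-\sum_{T\in\Th}2\mu_T\sum_{F\in\Fh[T]}(\VEC{v}_F,\GRADs\VEC{w}\,\normal_{TF})_F-\sum_{T\in\Th}\mathrm{s}_{\stokes,T}(\IT\VEC{w},\uvec{v}_T).
$$
The middle sum vanishes: the hypothesis $\DIV(2\mu\GRADs\VEC{w})\in L^2(\Omega)^d$ forces $2\mu\GRADs\VEC{w}$ to have, row-wise, a single-valued normal trace across interfaces, so the two element contributions on each internal face cancel (opposite normals, same single-valued $\VEC{v}_F$), while $\VEC{v}_F=\VEC{0}$ on boundary faces because $\uvec{v}_h\in\UhD$.

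\emph{Estimating the surviving terms.} In the interface term, a Cauchy--Schwarz inequality on each face, the insertion of $h_F^{\pm\frac12}$, the equivalence $h_F\simeq h_T$, and a discrete Cauchy--Schwarz inequality yield the bound $(\sum_{T\in\Th}2\mu_T\seminorm[1,\partial T]{\uvec{v}_T}^2)^{\frac12}(\sum_{T\in\Th}2\mu_T h_T\sum_{F\in\Fh[T]}\norm[F]{\GRADs(\VEC{w}-\cvec{w}_T)}^2)^{\frac12}$, and the trace approximation of the strain projector (Corollary \ref{cor:elproj:approx.trace} with $X=T$, $\ell=k+1$, $m=1$, $s=k+2$, equivalently a continuous trace inequality combined with Theorem \ref{thm:elproj:approx} for $m=1,2$) gives $h_T\sum_{F\in\Fh[T]}\norm[F]{\GRADs(\VEC{w}-\elproj{k+1}\VEC{w})}^2\lesssim h_T^{2(k+1)}\seminorm[H^{k+2}(T)^d]{\VEC{w}}^2$. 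For the stabilisation term, polynomial consistency (S3) lets one replace $\IT\VEC{w}$ by $\IT(\VEC{w}-\vlproj[T]{k+1}\VEC{w})$; the Cauchy--Schwarz inequality for the positive semidefinite form $\mathrm{s}_{\stokes,T}$, the boundedness in (S2) (giving $\mathrm{s}_{\stokes,T}(\IT\VEC{z},\IT\VEC{z})\lesssim(2\mu_T)\norm[\strain,T]{\IT\VEC{z}}^2$ and $\mathrm{s}_{\stokes,T}(\uvec{v}_T,\uvec{v}_T)^{\frac12}\le\norm[\stokes,T]{\uvec{v}_T}$), the standard interpolation estimate $\norm[\strain,T]{\IT\VEC{z}}\lesssim\seminorm[H^1(T)^d]{\VEC{z}}$ (from \eqref{eq:lproj:boundedness} and \cite[Proposition 7.1]{Di-Pietro.Droniou:17}), and \eqref{eq:lproj:approx} (with $\ell=k+1$, $m=1$, $s=k+2$) then bound $|\mathrm{s}_{\stokes,T}(\IT\VEC{w},\uvec{v}_T)|$ by $(2\mu_T)^{\frac12}h_T^{k+1}\seminorm[H^{k+2}(T)^d]{\VEC{w}}\norm[\stokes,T]{\uvec{v}_T}$. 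Using \eqref{eq:aST.stability} and $\seminorm[1,\partial T]{\uvec{v}_T}\le\norm[\strain,T]{\uvec{v}_T}$, both contributions are thus dominated, up to a constant, by $\sum_{T\in\Th}(2\mu_T)^{\frac12}h_T^{k+1}\seminorm[H^{k+2}(T)^d]{\VEC{w}}\cdot(2\mu_T)^{\frac12}\norm[\strain,T]{\uvec{v}_T}$.

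\emph{Producing the cut-off and concluding.} The decisive estimate is $(2\mu_T)^{\frac12}\norm[\strain,T]{\uvec{v}_T}\lesssim\min(1,\Cf^{-1})^{\frac12}\norm[\VEC{U},T]{\uvec{v}_T}$: bounding by $\norm[\stokes,T]{\uvec{v}_T}\le\norm[\VEC{U},T]{\uvec{v}_T}$ (through \eqref{eq:aST.stability}) gives the factor $1$, whereas \eqref{eq:norm.ET<=norm.DT} gives, when $\nu_T>0$, $(2\mu_T)\norm[\strain,T]{\uvec{v}_T}^2\lesssim\Cf^{-1}\norm[\darcy,T]{\uvec{v}_T}^2\le\Cf^{-1}\norm[\VEC{U},T]{\uvec{v}_T}^2$, i.e.\ the factor $\Cf^{-1}$; when $\nu_T=0$ one has $\Cf^{-1}=+\infty$ by convention, so only the first bound is used and $\min(1,\Cf^{-1})=1$. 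Substituting, the estimate of the previous step reads $\lesssim\sum_{T\in\Th}[(2\mu_T)\min(1,\Cf^{-1})h_T^{2(k+1)}\seminorm[H^{k+2}(T)^d]{\VEC{w}}^2]^{\frac12}\norm[\VEC{U},T]{\uvec{v}_T}$, and a final Cauchy--Schwarz inequality over $T$ together with $\norm[\VEC{U},h]{\uvec{v}_h}=1$ yields \eqref{eq:aSh.consistency}. I expect the main difficulty to lie precisely in this last manoeuvre, together with the $\mu$-weight bookkeeping and interface cancellation of the first step: a direct estimate only delivers the constant $1$, and one must deliberately trade a factor $2\mu_T$ through \eqref{eq:norm.ET<=norm.DT} to reveal the $\Cf^{-1}$ decay in the Darcy-dominated regime.
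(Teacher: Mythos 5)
Your proof is correct and follows essentially the same route as the paper's: the same reformulation of the Galerkin term via $\rST\IT\VEC{w}=\elproj{k+1}\VEC{w}$ and \eqref{eq:rST:pde}, the same cancellation of the single-valued flux term using $\DIV(2\mu\GRADs\VEC{w})\in L^2(\Omega)^d$ and the boundary conditions, the same trace-approximation and stabilisation-consistency bounds, and the same two-regime argument combining \eqref{eq:aST.stability} with \eqref{eq:norm.ET<=norm.DT} to produce the cut-off $\min(1,\Cf^{-1})$. (Only a cosmetic slip in the prose: the identity you display for the Galerkin contribution comes from \eqref{eq:rST:pde} written for $\uvec{v}_T$ and tested with $\cvec{w}_T$, not ``written for $\IT\VEC{w}$''; the formula itself is the right one.)
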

\begin{proof}
  We proceed to bound the consistency error for a generic $\uvec{v}_h\in\UhD$.
  For the sake of brevity, throughout the proof we let, for all $T\in\Th$, $\cvec{w}_T\coloneq\rST\IT\VEC{w}=\elproj[T]{k+1}\VEC{w}$ (see \eqref{eq:rST.IT=elproj}).

  We start by noting the following consistency property for the stabilisation term valid under Assumption \ref{ass:sST}, whose proof follows using the arguments of \cite[Proposition 3.1]{Di-Pietro.Tittarelli:18}:
  For all $T\in\Th$,
  \begin{equation}\label{eq:sST.approx}
    \seminorm[\stokes,T]{\IT\VEC{w}}
    \lesssim (2\mu_T)^{\frac12}h_T^{k+1}\seminorm[H^{k+2}(T)^d]{\VEC{w}}\mbox{ where, for all $\uvec{v}_T\in\UT$, } \seminorm[\stokes,T]{\uvec{v}_T}\coloneq\mathrm{s}_{\stokes,T}(\uvec{v}_T,\uvec{v}_T)^{\frac12}.
  \end{equation}

  We next find a more convenient reformulation of the terms composing the consistency error.
  Integrating by parts element by element, and using the continuity of the normal component of $2\mu\GRADs\VEC{w}$ across interfaces together with the strongly enforced boundary conditions to insert $\VEC{v}_F$ into the boundary term, we have that
  \begin{equation}\label{eq:aSh.consistency:1}
    -(\DIV(2\mu\GRADs\VEC{w}),\VEC{v}_h)
    = \sum_{T\in\Th}\left(
    2\mu_T(\GRADs\VEC{w},\GRADs\VEC{v}_T)_T
    + \sum_{F\in\Fh[T]}2\mu_T(\GRADs\VEC{w}\normal_{TF},\VEC{v}_F-\VEC{v}_T)_F
    \right).
  \end{equation}
  On the other hand, plugging \eqref{eq:aST} into \eqref{eq:aSh}, and expanding, for all $T\in\Th$, the consistency term involving $\rST\uvec{v}_T$ according to its definition \eqref{eq:rST:pde} with $\VEC{w}=\cvec{w}_T$, it is inferred that
  \begin{multline}\label{eq:aSh.consistency:2}
    \mathrm{a}_{\stokes,h}(\Ih\VEC{w},\uvec{v}_h)
    =
    \\
    \sum_{T\in\Th}\left(
    2\mu_T(\GRADs\cvec{w}_T,\GRADs\VEC{v}_T)_T
    + \sum_{F\in\Fh[T]}2\mu_T(\GRADs\cvec{w}_T\normal_{TF},\VEC{v}_F-\VEC{v}_T)_F
    + \mathrm{s}_{\stokes,T}(\IT\VEC{w},\uvec{v}_T)_T
    \right).
  \end{multline}
  Subtracting \eqref{eq:aSh.consistency:2} from \eqref{eq:aSh.consistency:1}, taking absolute values, and using the definition \eqref{eq:elproj.k+1} of the strain projector to cancel the first terms in parentheses, we get
  \begin{equation}\label{eq:err.Sh.development}
    \begin{aligned}
      |\Err[\stokes,h](\VEC{w};\uvec{v}_h)|
      &=\left|
      \sum_{T\in\Th}\left(
      \sum_{F\in\Fh[T]}2\mu_T(\GRADs(\VEC{w}-\cvec{w}_T)\normal_{TF},\VEC{v}_F-\VEC{v}_T)_F
      - \mathrm{s}_{\stokes,T}(\IT\VEC{w},\uvec{v}_T)_T
      \right)
      \right|
      \\
      &\le
      \sum_{T\in\Th}\left(
      (2\mu_T) h_T^{\frac12}\norm[\partial T]{\GRADs(\VEC{w}-\cvec{w}_T)}\seminorm[1,\partial T]{\uvec{v}_T}
      + \seminorm[\stokes,T]{\IT\VEC{w}}\seminorm[\stokes,T]{\uvec{v}_T}
      \right)
      \\
      &\lesssim
      \sum_{T\in\Th}\left(
      (2\mu_T)^{\frac12} h_T^{k+1}\seminorm[H^{k+2}(T)^d]{\VEC{w}}~
      (2\mu_T)^{\frac12}\norm[\strain,T]{\uvec{v}_T}
      \right),
    \end{aligned}
  \end{equation}
  where we have used Cauchy--Schwarz inequalities to pass to the second line, and we have concluded using the approximation properties \eqref{eq:elproj:approx.trace} of the strain projector and \eqref{eq:sST.approx} of the Stokes stabilisation bilinear form, as well as the uniform local seminorm equivalence \eqref{eq:aST.stability}.
  Let now an element $T\in\Th$ be fixed.
  We distinguish two cases according to the value of the local friction coefficient.
  If $\Cf^{-1}\le 1$ (which means, in particular, that $\nu_T>0$), by virtue of \eqref{eq:norm.ET<=norm.DT} we can write
  $$
  (2\mu_T)^{\frac12}\norm[\strain,T]{\uvec{v}_T}
  \lesssim\Cf^{-\frac12}\norm[\darcy,T]{\uvec{v}_T}
  =\min(1,\Cf^{-1})^{\frac12}\norm[\darcy,T]{\uvec{v}_T}
  \le\min(1,\Cf^{-1})^{\frac12}\norm[\VEC{U},T]{\uvec{v}_T}.
  $$
  If, on the other hand, $\Cf^{-1}>1$ (with $\Cf^{-1}=+\infty$ corresponding to $\nu_T=0$), recalling the uniform local seminorm equivalence \eqref{eq:aST.stability}, it holds
  $$
  (2\mu_T)^{\frac12}\norm[\strain,T]{\uvec{v}_T}
  \lesssim\norm[\stokes,T]{\uvec{v}_T}
  = \min(1,\Cf^{-1})^{\frac12}\norm[\stokes,T]{\uvec{v}_T}
  \le\min(1,\Cf^{-1})^{\frac12}\norm[\VEC{U},T]{\uvec{v}_T}.
  $$
  Plugging the above estimates into \eqref{eq:err.Sh.development} and using a discrete Cauchy--Schwarz inequality on the sum over $T\in\Th$, the conclusion follows.
\end{proof}

\subsubsection{Consistency of the Darcy bilinear form}

To prove Lemma \ref{lem:aDh.consistency} below, we will need the following optimal approximation properties (see, e.g., \cite[Lemma 3.17]{Gatica:14} for a proof):
For all $T\in\Th$ and all $m\in\{0,\ldots,k+1\}$,
\begin{equation}\label{eq:IRTNT.approx}
  \seminorm[H^m(T)^d]{\IRTNT\VEC{w}-\VEC{w}}
  \lesssim h_T^{k+1-m}\seminorm[H^{k+1}(T)^d]{\VEC{w}}.
\end{equation}
Using the trace inequality \eqref{eq:trace.T} below with $X=T$ in conjunction with \eqref{eq:IRTNT.approx} for $m=0$ and $m=1$, we additionally infer that it holds, for all $F\in\Fh[T]$,
\begin{equation}\label{eq:IRTNT.approx.trace}
  \norm[F]{\IRTNT\VEC{w}-\VEC{w}}\lesssim h_T^{k+\frac12}\seminorm[H^{k+1}(T)^d]{\VEC{w}}.
\end{equation}

\begin{lemma}[Consistency of the Darcy bilinear form]\label{lem:aDh.consistency}
  For all $\VEC{w}\in H^1(\Omega)^d\cap H^{k+1}(\Th)^d$, it holds that
  \begin{equation}\label{eq:aDh.consistency}
    \sup_{\uvec{v}_h\in\UhD,\norm[\VEC{U},h]{\uvec{v}_h}=1}\left|
    \Err[\darcy,h](\VEC{w};\uvec{v}_h)
    \right|\lesssim\left(
    \sum_{T\in\Th}\nu_T\min(1,\Cf) h_T^{2(k+1)}\seminorm[H^{k+1}(T)^d]{\VEC{w}}^2
    \right)^{\frac12}
  \end{equation}
  with Darcy consistency error
  \begin{equation}\label{eq:err.D}
    \Err[\darcy,h](\VEC{w};\uvec{v}_h)\coloneq(\nu\VEC{w},\rDh\uvec{v}_h) - \mathrm{a}_{\darcy,h}(\Ih\VEC{w},\uvec{v}_h).
  \end{equation}
\end{lemma}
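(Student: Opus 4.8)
The plan is to follow the template of the Stokes consistency estimate. \emph{First} I would recast the consistency error in elementwise form: expanding $\mathrm{a}_{\darcy,h}(\Ih\VEC{w},\uvec{v}_h)$ and using the identity $\rDT\IT\VEC{w}=\IRTNT\VEC{w}$ (see \eqref{eq:rDh.Ih=IRTN}),
\begin{equation*}
  \Err[\darcy,h](\VEC{w};\uvec{v}_h)
  = \sum_{T\in\Th}\Big(
  \nu_T(\VEC{w}-\IRTNT\VEC{w},\rDT\uvec{v}_T)_T
  - \mathrm{s}_{\darcy,T}(\IT\VEC{w},\uvec{v}_T)
  \Big),
\end{equation*}
so that it suffices to bound each local contribution by $(\nu_T\min(1,\Cf))^{\frac12}h_T^{k+1}\seminorm[H^{k+1}(T)^d]{\VEC{w}}\,\norm[\VEC{U},T]{\uvec{v}_T}$ (with, for $k=1$, a global correction described below), a discrete Cauchy--Schwarz inequality over $T\in\Th$ and the normalisation $\norm[\VEC{U},h]{\uvec{v}_h}=1$ then concluding.

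\emph{Next} I would dispatch the stabilisation term via $|\mathrm{s}_{\darcy,T}(\IT\VEC{w},\uvec{v}_T)|\le\seminorm[\darcy,T]{\IT\VEC{w}}\,\seminorm[\darcy,T]{\uvec{v}_T}$. Since $\dDT\IT\VEC{w}=\vlproj[T]{l}(\IRTNT\VEC{w}-\VEC{w})$ and $\dDTF\IT\VEC{w}=\vlproj[F]{k}\big((\IRTNT\VEC{w}-\VEC{w})_{|F}\big)$, the $L^2$-stability of the projectors together with the interpolation bounds \eqref{eq:IRTNT.approx}--\eqref{eq:IRTNT.approx.trace} give $\seminorm[\darcy,T]{\IT\VEC{w}}\lesssim\nu_T^{\frac12}h_T^{k+1}\seminorm[H^{k+1}(T)^d]{\VEC{w}}$. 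The key remark is that $\seminorm[\darcy,T]{\uvec{v}_T}\lesssim\min(1,\Cf)^{\frac12}\norm[\VEC{U},T]{\uvec{v}_T}$: the bound with the constant $1$ is trivial since $\seminorm[\darcy,T]{\uvec{v}_T}\le\norm[\darcy,T]{\uvec{v}_T}\le\norm[\VEC{U},T]{\uvec{v}_T}$, and the bound with $\Cf^{\frac12}$ follows from \eqref{eq:seminorm.DT<=seminorm.1pT} and the equivalence \eqref{eq:aST.stability}; retaining the better of the two yields the claim.

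\emph{Then} comes the main term. As $\IRTNT$ reproduces $\vlproj[T]{k-1}$ (see \eqref{eq:IRTNT}), $\VEC{w}-\IRTNT\VEC{w}\perp\Poly{k-1}(T)^d$; combining this with $\vlproj[T]{k-1}\rDT\uvec{v}_T=\vlproj[T]{k-1}\VEC{v}_T$ (see \eqref{eq:rDT.bis}) I would write
\begin{equation*}
  \nu_T(\VEC{w}-\IRTNT\VEC{w},\rDT\uvec{v}_T)_T
  = \nu_T(\VEC{w}-\IRTNT\VEC{w},\rDT\uvec{v}_T-\VEC{v}_T)_T
  + \nu_T(\VEC{w}-\IRTNT\VEC{w},\VEC{v}_T-\vlproj[T]{k-1}\VEC{v}_T)_T.
\end{equation*}
The first summand is controlled by Cauchy--Schwarz, the bound $\norm[T]{\VEC{w}-\IRTNT\VEC{w}}\lesssim h_T^{k+1}\seminorm[H^{k+1}(T)^d]{\VEC{w}}$ from \eqref{eq:IRTNT.approx}, estimate \eqref{eq:rDT-vT}, and the bound $\nu_T^{\frac12}h_T\seminorm[1,\partial T]{\uvec{v}_T}\lesssim\min(1,\Cf)^{\frac12}\norm[\VEC{U},T]{\uvec{v}_T}$ (which follows, via the defining relation \eqref{eq:Cf} of $\Cf$, from \eqref{eq:aST.stability} when $\Cf\le1$ and from \eqref{eq:norm.ET<=norm.DT} when $\Cf>1$). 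The second summand \emph{vanishes for $k\ge2$} since then $l=k-1$ and $\VEC{v}_T\in\Poly{k-1}(T)^d$; for $k=1$ it equals $\nu_T(\VEC{w}-\IRTNT\VEC{w},\VEC{v}_T-\vlproj[T]{0}\VEC{v}_T)_T$, which I would bound by Cauchy--Schwarz, the same interpolation estimate, and the Poincar\'e inequality $\norm[T]{\VEC{v}_T-\vlproj[T]{0}\VEC{v}_T}\lesssim h_T\norm[T]{\GRAD\VEC{v}_T}$; summing over $T$, applying a discrete Cauchy--Schwarz inequality, and controlling $\big(\sum_{T\in\Th}\norm[T]{\GRAD\VEC{v}_T}^2\big)^{\frac12}$ by the global discrete Korn inequality \eqref{eq:korn.h} and the equivalence \eqref{eq:aSh.stability}.

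\emph{The main obstacle} is exactly this $k=1$ contribution: when $l=1$ a local rigid rotation can be carried by $\VEC{v}_T$, and it then feeds only into the $L^2$-part $\nu_T\norm[T]{\rDT\uvec{v}_T}^2$ of the Darcy norm, with no accompanying gain of a $\Cf$-factor, so a purely local estimate cannot by itself produce the $\min(1,\Cf)$ weight in the Stokes-dominated regime and one has to route the argument through the global discrete Korn inequality, which re-expresses the offending quantity in terms of a global Stokes-type seminorm (this is, incidentally, the mechanism behind the constant $\alpha_\mu$ in Theorem \ref{thm:err.est}). Everything else is routine once the seminorm comparisons \eqref{eq:rDT-vT}--\eqref{eq:norm.ET<=norm.DT} and the Raviart--Thomas--N\'ed\'elec interpolation bounds \eqref{eq:IRTNT.approx}--\eqref{eq:IRTNT.approx.trace} are in hand.
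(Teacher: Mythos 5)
Your overall architecture — splitting off the stabilisation contribution, reducing the Galerkin part to $\nu_T(\VEC{w}-\IRTNT\VEC{w},\rDT\uvec{v}_T)_T$ via \eqref{eq:rDh.Ih=IRTN}, using the orthogonality of $\VEC{w}-\IRTNT\VEC{w}$ to $\Poly{k-1}(T)^d$, and invoking the seminorm comparisons to produce the cutoff $\min(1,\Cf)^{\frac12}$ — is the paper's strategy, and your treatment of the stabilisation term and of the first summand $\nu_T(\VEC{w}-\IRTNT\VEC{w},\rDT\uvec{v}_T-\VEC{v}_T)_T$ is correct in all regimes (including $\Cf>1$, where \eqref{eq:norm.ET<=norm.DT} gives $\nu_T^{\frac12}h_T\seminorm[1,\partial T]{\uvec{v}_T}\lesssim\norm[\darcy,T]{\uvec{v}_T}$ as you say). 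The one genuine gap is in your handling of the residual summand for $k=1$, namely $\nu_T(\VEC{w}-\IRTNT\VEC{w},\VEC{v}_T-\vlproj[T]{0}\VEC{v}_T)_T$, on elements with $\Cf>1$. After Cauchy--Schwarz and Poincar\'e this term carries the weight $\nu_T h_T\norm[T]{\GRAD\VEC{v}_T}=\nu_T^{\frac12}\,(2\mu_T\Cf)^{\frac12}\norm[T]{\GRAD\VEC{v}_T}$, and since you need to extract the full factor $\nu_T^{\frac12}\min(1,\Cf)^{\frac12}h_T^{2}\seminorm[H^2(T)^d]{\VEC{w}}$ ($=\nu_T^{\frac12}h_T^{2}\seminorm[H^2(T)^d]{\VEC{w}}$ when $\Cf>1$), what remains to be absorbed is $(2\mu_T\Cf)^{\frac12}\norm[T]{\GRAD\VEC{v}_T}$, not $(2\mu_T)^{\frac12}\norm[T]{\GRAD\VEC{v}_T}$. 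Routing this through the global Korn inequality \eqref{eq:korn.h} and \eqref{eq:aSh.stability} produces a constant proportional to $\sup_T\Cf\cdot\overline{\mu}/\underline{\mu}$, which is unbounded in the Darcy-dominated regime (and degenerates entirely as $\mu\to 0$), so your argument as stated does not deliver the claimed estimate on such elements.

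The paper avoids this by \emph{not} performing the orthogonality split when $\Cf>1$: there it bounds $\nu_T(\VEC{w}-\IRTNT\VEC{w},\rDT\uvec{v}_T)_T$ directly by Cauchy--Schwarz against $\norm[T]{\nu_T^{\frac12}\rDT\uvec{v}_T}\le\norm[\darcy,T]{\uvec{v}_T}$, which already sits in the local Darcy norm; the insertion of $\VEC{v}_T$ (for $k\ge2$) or $\vlproj[T]{0}\VEC{v}_T$ (for $k=1$), together with the Poincar\'e/global-Korn route and the resulting $\alpha_\mu$, is used only on elements with $\Cf\le 1$, where the offending weight is $(2\mu_T)^{\frac12}\le(2\overline{\mu})^{\frac12}$. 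Your proof is repaired either by adopting that case split, or by keeping your decomposition and, on elements with $\Cf>1$, bounding $\nu_T^{\frac12}h_T\norm[T]{\GRAD\VEC{v}_T}\lesssim\nu_T^{\frac12}\norm[T]{\VEC{v}_T}$ via a discrete inverse inequality and then $\nu_T^{\frac12}\norm[T]{\VEC{v}_T}\lesssim\nu_T^{\frac12}h_T\seminorm[1,\partial T]{\uvec{v}_T}+\nu_T^{\frac12}\norm[T]{\rDT\uvec{v}_T}\lesssim\norm[\darcy,T]{\uvec{v}_T}$ using \eqref{eq:rDT-vT} and \eqref{eq:norm.ET<=norm.DT}. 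One last point, which is not a defect of your argument: like the paper's own proof, your $k=1$, $\Cf\le1$ branch yields the extra factor $\alpha_\mu$ of \eqref{eq:alpha.mu}, which appears in the final estimate \eqref{eq:conv.rate} even though it is not written in the statement \eqref{eq:aDh.consistency}.
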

\begin{proof}
  We decompose the consistency error as follows:
  \begin{equation}\label{eq:EDh.decomposition}
    \Err[\darcy,h](\VEC{w};\uvec{v}_h)
    \eqcolon\term_1 + \term_2
    =\sum_{T\in\Th}\term_1(T) + \sum_{T\in\Th}\term_2(T)
  \end{equation}
  where, for all $T\in\Th$, we have set
  $$
  \term_1(T)\coloneq\nu_T(\VEC{w} - \rDT\IT\VEC{w},\rDT\uvec{v}_T)_T,\qquad
  \term_2(T)\coloneq\mathrm{s}_{\darcy,T}(\IT\VEC{w},\uvec{v}_T).
  $$
  \\
  (i) \emph{Estimate of $\term_1$.}
  For any $T\in\Th$ such that $\Cf>1$, using \eqref{eq:rDh.Ih=IRTN} to replace $\rDT\IT\VEC{w}$ by $\IRTNT\VEC{w}$, we can write
  \begin{equation}\label{eq:aDh.consistency:T1(T):Cf>1}
    \begin{aligned}
      |\term_1(T)|
      &= |\nu_T(\VEC{w}-\IRTNT\VEC{w},\rDT\uvec{v}_T)_T|
      \\
      &\le\nu_T^{\frac12}\norm[T]{\VEC{w}-\IRTNT\VEC{w}}\norm[T]{\nu_T^{\frac12}\rDT\uvec{v}_T}
      \\
      &\lesssim\nu_T^{\frac12} \min(1,\Cf)^{\frac12} h_T^{k+1}\seminorm[H^{k+1}(T)^d]{\VEC{w}}\norm[\darcy,T]{\uvec{v}_T}
      \\
      &\le\nu_T^{\frac12} \min(1,\Cf)^{\frac12} h_T^{k+1}\seminorm[H^{k+1}(T)^d]{\VEC{w}}\norm[\VEC{U},T]{\uvec{v}_T},
    \end{aligned}
  \end{equation}
  where we have used the Cauchy--Schwarz inequality to pass to the second line, the approximation properties \eqref{eq:IRTNT.approx} with $m=0$ and the fact that $1=\min(1,\Cf)$ since $\Cf>1$ together with the definition \eqref{eq:aDh.coercivity} of the $\norm[\darcy,T]{{\cdot}}$-norm to pass to the third line, and the definition \eqref{eq:norm.Uh} of the $\norm[\VEC{U},T]{{\cdot}}$-norm to conclude.
  
  If $\Cf\le1$ and $k\ge 2$, on the other hand, we can write
  \begin{equation}\label{eq:aDh.consistency:T1(T):Cf<=1.k>=2}
    \begin{aligned}
      |\term_1(T)|
      &=|\nu_T (\VEC{w}-\IRTNT\VEC{w},\rDT\uvec{v}_T-\VEC{v}_T)_T|
      \\
      &\le\nu_T\norm[T]{\VEC{w}-\IRTNT\VEC{w}}\norm[T]{\rDT\uvec{v}_T-\VEC{v}_T}
      \\
      &\lesssim\nu_T^{\frac12} h_T^{k+1}\seminorm[H^{k+1}(T)^d]{\VEC{w}}~\nu_T^{\frac12}h_T\seminorm[1,\partial T]{\uvec{v}_T}
      \\
      &\lesssim\nu_T^{\frac12} \Cf^{\frac12} h_T^{k+1}\seminorm[H^{k+1}(T)^d]{\VEC{w}}\norm[\stokes,T]{\uvec{v}_T},
      \\
      &=\nu_T^{\frac12} \min(1,\Cf)^{\frac12} h_T^{k+1}\seminorm[H^{k+1}(T)^d]{\VEC{w}}\norm[\VEC{U},T]{\uvec{v}_T},
    \end{aligned}
  \end{equation}
  where we have used the definition \eqref{eq:IRTNT:T} of $\IRTNT$ to insert $\VEC{v}_T\in\Poly{k-1}(T)^d$ in the first line, the Cauchy--Schwarz inequality to pass to the second line, the approximation properties \eqref{eq:IRTNT.approx} of $\IRTNT$ with $m=0$ together with the bound \eqref{eq:rDT-vT} to pass to the third line, the definition \eqref{eq:Cf} of the local friction coefficient together with the uniform local seminorm equivalence \eqref{eq:aST.stability} to pass to the fourth line, and the fact $\Cf=\min(1,\Cf)$ since $\Cf\le 1$ together with the definition \eqref{eq:norm.Uh} of the $\norm[\VEC{U},T]{{\cdot}}$-norm to conclude.

  The case $\Cf\le 1$ and $k=1$ has to be treated separately because the reasoning in the first line of \eqref{eq:aDh.consistency:T1(T):Cf<=1.k>=2} breaks down as we cannot insert $\VEC{v}_T\in\Poly{1}(T)^d$ inside the second argument of the $L^2$-product.
  To overcome this difficulty, we insert instead $\vlproj[T]{0}\VEC{v}_T$, write $\rDT[1]\uvec{v}_T-\vlproj[T]{0}\VEC{v}_T=\rDT[1]\uvec{v}_T-\VEC{v}_T+\VEC{v}_T-\vlproj[T]{0}\VEC{v}_T$ and, proceeding in a similar manner as before with the additional use of a local Poincar\'e inequality corresponding to \eqref{eq:lproj:approx} with $\ell=0$, $s=1$, and $m=0$, we arrive at the following bound:
  \begin{equation}\label{eq:aDh.consistency:T1(T):Cf<=1.k=1}
    |\term_1(T)|
    \lesssim
    \nu_T^{\frac12} \min(1,\Cf)^{\frac12} h_T^2\seminorm[H^2(T)^d]{\VEC{w}}
    (2\mu_T{})^{\frac12}\left(
    \norm[T]{\GRAD\VEC{v}_T}^2 + \seminorm[1,\partial T]{\VEC{v}_T}^2
    \right)^{\frac12}.
  \end{equation}

  Gathering \eqref{eq:aDh.consistency:T1(T):Cf>1}--\eqref{eq:aDh.consistency:T1(T):Cf<=1.k=1}, using a discrete Cauchy--Schwarz inequality on the sum over $T$, invoking the discrete Korn inequality \eqref{eq:korn.h} followed by the first inequality in \eqref{eq:aSh.stability} for the case $\Cf\le 1$ and $k=1$ to further bound (introducing $\alpha_\mu$) the term resulting from the last factor in the right-hand side of \eqref{eq:aDh.consistency:T1(T):Cf<=1.k=1}, and using the fact that, by definition, $\alpha_\mu\ge 1$, we arrive at
  \begin{equation}\label{eq:aDh.consistency:T1}
    |\term_1|
    \lesssim\left(
    \sum_{T\in\Th}\alpha_\mu\nu_T\min(1,\Cf) h_T^{2(k+1)}\seminorm[H^{k+1}(T)^d]{\VEC{w}}^2
    \right)^{\frac12}\norm[\VEC{U},h]{\uvec{v}_h}.
  \end{equation}
  \medskip\\
  (ii) \emph{Estimate of $\term_2$.}
  To bound the second elementary contribution in the right-hand side of \eqref{eq:EDh.decomposition}, we preliminarily estimate the $L^2$-norms of the Darcy difference operators \eqref{eq:dDT.dDTF} when their argument is $\IT\VEC{w}$.
  Clearly, $\dDT\IT\VEC{w}$ vanishes when $k\ge 2$ (see \eqref{eq:dDT=0.k>=2}).
  When $k=l=1$, on the other hand, we can write, accounting for \eqref{eq:rDh.Ih=IRTN},
  $$
  \norm[T]{\dDT[1]\IT[1]\VEC{w}}
  =\norm[T]{\vlproj[T]{1}(\IRTNT[1]\VEC{w} - \VEC{w})}
  \le\norm[T]{\IRTNT[1]\VEC{w} - \VEC{w}}
  \lesssim h_T^2\seminorm[H^2(T)^d]{\VEC{w}},
  $$
  where we have used the boundedness of $\vlproj[T]{l}$ expressed by \eqref{eq:lproj:boundedness} with $X=T$, $\ell=k$, and $s=0$ in the first bound and the approximation properties \eqref{eq:IRTNT.approx} with $k=1$ and $m=0$ to conclude.
  On the other hand, for any $F\in\Fhi[T]$ we can write
  $$
  \norm[F]{\dDTF\IT\VEC{w}}
  =\norm[F]{\vlproj[F]{k}(\IRTNT\VEC{w}-\VEC{w})}
  \le\norm[F]{\IRTNT\VEC{w}-\VEC{w}}
  \lesssim h_T^{k+\frac12}\seminorm[H^{k+1}(T)^d]{\VEC{w}},
  $$
  where we have used \eqref{eq:rDh.Ih=IRTN}, the boundedness of $\vlproj[F]{k}$ expressed by \eqref{eq:lproj:boundedness} with $X=F$, $\ell=k$, and $s=0$, and \eqref{eq:IRTNT.approx.trace}.
  Using the Cauchy--Schwarz inequality together with the above bounds yields
  $$
  |\term_2(T)|
  \le\seminorm[\darcy,T]{\IT\VEC{w}}\seminorm[\darcy,T]{\uvec{v}_T}
  \lesssim\nu_T^{\frac12}h_T^{k+1}\seminorm[H^{k+1}(T)^d]{\VEC{w}}\seminorm[\darcy,T]{\uvec{v}_T}
  $$
  with $\seminorm[\darcy,T]{{\cdot}}$-seminorm defined by \eqref{eq:seminorm.DT<=seminorm.1pT}.
  If $\Cf>1$, observing that $\seminorm[\darcy,T]{\uvec{v}_T}\le\norm[\darcy,T]{\uvec{v}_T}\le\norm[\VEC{U},T]{\uvec{v}_T}$ (see \eqref{eq:aDh.coercivity}, \eqref{eq:seminorm.DT<=seminorm.1pT}, and \eqref{eq:norm.Uh}), this immediately yields
  \begin{equation*}\label{eq:aDh.consistency:T2(T):Cf>1}
    |\term_2(T)|
    \lesssim\nu_T^{\frac12}\min(1,\Cf)^{\frac12}h_T^{k+1}\seminorm[H^{k+1}(T)^d]{\VEC{w}}\norm[\VEC{U},T]{\uvec{v}_T}.
  \end{equation*}
  If $\Cf\le 1$, on the other hand, recalling \eqref{eq:seminorm.DT<=seminorm.1pT} we can go on writing
  \begin{equation*}\label{eq:aDh.consistency:T2(T):Cf<=1}
    \begin{aligned}
      |\term_2(T)|
      &\lesssim\nu_T^{\frac12}\Cf^{\frac12} h_T^{k+1}\seminorm[H^{k+1}(T)^d]{\VEC{w}}~(2\mu_T)^{\frac12}\seminorm[1,\partial T]{\uvec{v}_T}
      \\
      &\le\nu_T^{\frac12}\min(1,\Cf)^{\frac12} h_T^{k+1}\seminorm[H^{k+1}(T)^d]{\VEC{w}}\norm[\stokes,T]{\uvec{v}_T}
      \\
      &\lesssim\nu_T^{\frac12}\min(1,\Cf)^{\frac12} h_T^{k+1}\seminorm[H^{k+1}(T)^d]{\VEC{w}}\norm[\VEC{U},T]{\uvec{v}_T}
    \end{aligned}
  \end{equation*}
  where we have used the fact that $\Cf\le 1$ to replace $\Cf^{\frac12}$ by $\min(1,\Cf)^{\frac12}$ together with the uniform local seminorm equivalence \eqref{eq:aST.stability} in the second line, and the definition \eqref{eq:norm.Uh} of the $\norm[\VEC{U},T]{{\cdot}}$-norm to conclude.
  Gathering the above bounds and using a discrete Cauchy--Schwarz inequality on the sum over $T\in\Th$, we arrive at
  \begin{equation}\label{eq:aDh.consistency:T2}
    |\term_2|
    \lesssim\left(
    \sum_{T\in\Th}\nu_T\min(1,\Cf) h_T^{2(k+1)}\seminorm[H^{k+1}(T)^d]{\VEC{w}}^2
    \right)^{\frac12}\norm[\VEC{U},h]{\uvec{v}_h}.
  \end{equation}
  \medskip\\
  (iii) \emph{Conclusion.}
  Take the absolute value of \eqref{eq:EDh.decomposition}, use \eqref{eq:aDh.consistency:T1} and \eqref{eq:aDh.consistency:T2} (after recalling that, by \eqref{eq:alpha.mu}, $\alpha_\mu\ge 1$) to estimate the terms in the right-hand side, and use the resulting bound to estimate the supremum in \eqref{eq:aDh.consistency}.
\end{proof}

\begin{remark}[Cut-off factors]
  The proof and usage of the local cut-off factors are inspired by \cite{Di-Pietro.Droniou.ea:15}, in which an advection--diffusion--reaction model is considered. In this reference, a seamless treatment of both advection--dominated and diffusion--dominated regimes is carried out by introducing two discrete norms (advective and diffusive); the consistency errors due to the advective and diffusive terms are estimated using either one of the norm, depending on the locally dominating regime as determined by the value of a local P\'eclet number. Here, we defined Stokes and Darcy norms, and similarly estimated the consistency errors of the Stokes and Darcy terms using either norm, depending on the locally dominating regime as determined by the friction coefficients $\Cf$.
\end{remark}

\subsubsection{Error estimate and convergence}\label{sec:proofs:convergence:proof}

We are now ready to prove the main convergence result.
\begin{proof}[Proof of Theorem \ref{thm:err.est}]
  (i) \emph{Error estimates.}
  The a priori error estimate \eqref{eq:err.est} is inferred proceeding as in the proof of the a priori bound \eqref{eq:a-priori.basic} in Theorem \ref{thm:well-posedness}, this time for the error equation \eqref{eq:error.eq}.
  \medskip\\
  (ii) \emph{Convergence rate.}
  To estimate the convergence rate, we bound the dual norm of the consistency error $\err$.
  Using its definition \eqref{eq:Rup} together with the fact that \eqref{eq:strong:momentum} is satisfied almost everywhere in $\Omega$ by the weak solution $(\VEC{u},p)$ of \eqref{eq:weak}, it is inferred for all $\uvec{v}_h\in\UhD$ that
  \begin{multline}\label{eq:err.decomposition}
    \langle\err,\uvec{v}_h\rangle
    =
    \\
    (\DIV(2\mu\GRADs\VEC{u}), \VEC{v}_h - \rDh\uvec{v}_h)
    + \Err[\stokes,h](\VEC{u};\uvec{v}_h)
    + \Err[\darcy,h](\VEC{u};\uvec{v}_h)
    + \left[(\GRAD p,\rDh\uvec{v}_h) -  \mathrm{b}_h(\uvec{v}_h,\hat{p}_h)\right]
  \end{multline}
  with Stokes and Darcy errors defined by \eqref{eq:err.S} and \eqref{eq:err.D}, respectively.
  Denote by $\term_1,\ldots,\term_4$ the terms in the right-hand side.
  Recalling \eqref{eq:rDT:T}, for the first term we can write
  $$
  \term_1
  = \sum_{T\in\Th}
  2\mu_T(\DIV\GRADs\VEC{u} - \vlproj[T]{k-1}(\DIV\GRADs\VEC{u}),\VEC{v}_T-\rDT\uvec{v}_T)_T.
  $$
  Hence, using Cauchy--Schwarz inequalities followed by the optimal approximation properties \eqref{eq:lproj:approx} of $\vlproj[T]{k-1}$ with $\ell=s=k-1$ and $m=0$ together with \eqref{eq:rDT-vT}, we have that        
  \begin{equation}\label{eq:T1.basic}
    |\term_1|
    \lesssim
    \sum_{T\in\Th} (2\mu_T)^{\frac12}h_T^{k+1}\seminorm[H^{k+2}(T)^d]{\VEC{u}}
    ~(2\mu_T)^{\frac12}\seminorm[1,\partial T]{\uvec{v}_T}.
  \end{equation}
  When $\Cf^{-1}> 1$, we can write using the uniform local seminorm equivalence \eqref{eq:aST.stability} and recalling the definition \eqref{eq:norm.Uh} of the $\norm[\VEC{U},T]{{\cdot}}$-norm,
  $$
  (2\mu_T)^{\frac12}\seminorm[1,\partial T]{\uvec{v}_T}
  \lesssim\min(1,\Cf^{-1})^{\frac12}\norm[\stokes,T]{\uvec{v}_T}
  \le\min(1,\Cf^{-1})^{\frac12}\norm[\VEC{U},T]{\uvec{v}_T}.
  $$
  On the other hand, when $\Cf^{-1}\le1$ (so that, in particular, $\nu_T>0$), \eqref{eq:norm.ET<=norm.DT} gives
  $$
  (2\mu_T)^{\frac12}\seminorm[1,\partial T]{\uvec{v}_T}
  \lesssim\Cf^{-\frac12}\norm[\darcy,T]{\uvec{v}_T}
  =\min(1,\Cf^{-1})^{\frac12}\norm[\darcy,T]{\uvec{v}_T}
  \le\min(1,\Cf^{-1})^{\frac12}\norm[\VEC{U},T]{\uvec{v}_T}.
  $$
  Plugging the above bounds into \eqref{eq:T1.basic} and using a discrete Cauchy--Schwarz inequality on the sum over $T\in\Th$ finally yields
  \begin{equation}\label{eq:T1}
    |\term_1|
    \lesssim\left(
    \sum_{T\in\Th}(2\mu_T)\min(1,\Cf^{-1})h_T^{2(k+1)}\seminorm[H^{k+2}(T)^d]{\VEC{u}}^2
    \right)^{\frac12}\norm[\VEC{U},h]{\uvec{v}_h}.
  \end{equation}
  The second term is readily estimated using the consistency properties \eqref{eq:aSh.consistency} of the Stokes bilinear form:
  \begin{equation}\label{eq:T2}
    |\term_2|
    \lesssim\left(
    \sum_{T\in\Th} (2\mu_T)\min(1,\Cf^{-1})h_T^{2(k+1)}\seminorm[H^{k+2}(T)^d]{\VEC{u}}^2
    \right)^{\frac12}\norm[\VEC{U},h]{\uvec{v}_h}.
  \end{equation}
  Similarly, the consistency properties \eqref{eq:aDh.consistency} of the Darcy bilinear form yield
  \begin{equation}\label{eq:T3}
    |\term_3|
    \lesssim\left(
    \sum_{T\in\Th}\nu_T\min(1,\Cf) h_T^{2(k+1)}\seminorm[H^{k+1}(T)^d]{\VEC{u}}^2
    \right)^{\frac12}\norm[\VEC{U},h]{\uvec{v}_h}.
  \end{equation}
  In order to estimate the fourth term, we observe that
    $$
    (\GRAD p,\rDh\uvec{v}_h)
    = -(\DIV\rDh\uvec{v}_h,p)
    = -(\DIV\rDh\uvec{v}_h,\hat{p}_h)      
    = \mathrm{b}_h(\uvec{v}_h,\hat{p}_h),
    $$
    where we have used a global integration by parts in the first passage,
    standard properties of Raviart--Thomas--N\'ed\'elec functions to infer $\DIV\rDh\uvec{v}_h\in\Poly{k}(\Th)$ (see, e.g., \cite[Proposition 2.3.3]{Boffi.Brezzi.ea:13}) and replace $p$ by $\hat{p}_h$ in the second passage,
    and property \eqref{eq:bh.div.rDh} to conclude.
    This readily implies
  \begin{equation}\label{eq:T4}
    \term_4 = 0.
  \end{equation}
  
  Plugging the bounds \eqref{eq:T1}--\eqref{eq:T4} into \eqref{eq:err.decomposition}, using the resulting inequality to estimate the supremum in \eqref{eq:norm.U*h}, and recalling \eqref{eq:err.est} yields \eqref{eq:conv.rate}.
\end{proof}


\appendix

\section{Uniform Korn inequalities with application to the study of optimal approximation properties for the strain projector}\label{sec:appen}

This appendix contains two technical results whose interest goes beyond the application to the Brinkman problem: the proof of uniform Korn inequalities on star-shaped polytopal domains, and their application to the study of optimal approximation properties for the strain projector on polynomial spaces.

\subsection{Uniform local second Korn inequality}\label{app:korn}

The goal of this section is to establish a uniform second Korn inequality for mesh elements. We consider generic regular polytopal elements, which is more general than the setting considered in Section \ref{sec:setting}; note that, even if some parts of the proofs can be simplified when the elements are triangular/tetrahedral, most of the difficulty remains even for such elements.
The main result of this section is stated in the following proposition. Besides the second Korn inequality, we also state a uniform Ne\v{c}as inequality which is strongly related to the Korn inequality.

\begin{proposition}\label{prop:unif.korn}
Let $\varrho>0$. There exists $C$ depending only on $d$ and $\varrho$ such that, for any polytope $T\subset \Real^d$, of diameter $h_T$, that is star-shaped with respect to every point in a ball of radius $\varrho h_T$,
\begin{equation}\label{eq:necas}
\norm[T]{v-\lproj[T]{0}v}\le C \norm[H^{-1}(T)^d]{\GRAD v}\qquad\forall v\in L^2(T),
\end{equation}
and
\begin{equation}\label{eq:korn}
  \norm[T]{\GRAD \VEC{u}-\vlproj[T]{0}(\GRADss\VEC{u})}\le
  C \norm[T]{\GRADs\VEC{u}}\qquad\forall \VEC{u}\in H^1(T)^d.
\end{equation}
\end{proposition}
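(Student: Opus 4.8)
My plan is to reduce everything to a \emph{scaling-invariant} formulation and then exploit compactness. First, by dilating $T$ to a domain of unit diameter (which affects both sides of \eqref{eq:necas} and \eqref{eq:korn} by matching powers of $h_T$, since $\norm[H^{-1}]{\cdot}$ carries one more length power than $\norm[L^2]{\cdot}$ and $\GRAD$ removes one), it suffices to treat polytopes $T$ with $h_T=1$ that are star-shaped with respect to every point of a fixed ball $B$ of radius $\varrho$. The class of such polytopes, after a translation bringing the center of $B$ to the origin, sits inside the closed unit ball and contains $B$; this is a compact class in a suitable sense (e.g. Hausdorff distance of the closures), which is the structural fact that makes a \emph{uniform} constant plausible. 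I expect to run the argument first for \eqref{eq:necas} (the Ne\v{c}as inequality), and then \emph{deduce} \eqref{eq:korn} from it by the classical identity expressing second derivatives of $\VEC{u}$ through first derivatives of $\strain(\VEC{u})$, namely $\partial_{jk}u_i = \partial_j\varepsilon_{ik}(\VEC{u}) + \partial_k\varepsilon_{ij}(\VEC{u}) - \partial_i\varepsilon_{jk}(\VEC{u})$; applying \eqref{eq:necas} componentwise to $v=\partial_m u_i$ (more precisely to the entries of $\GRAD\VEC{u}$, subtracting their $\lproj[T]{0}$, which on the skew part is exactly $\vlproj[T]{0}(\GRADss\VEC{u})$ and on the symmetric part is controlled directly) gives \eqref{eq:korn}.

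The core is thus the uniform Ne\v{c}as inequality. The standard non-uniform statement on a fixed Lipschitz domain $X$ reads: the gradient map $\GRAD\colon L^2(X)/\Real \to H^{-1}(X)^d$ is an isomorphism onto its (closed) range, equivalently $\norm[L^2(X)]{v-\lproj[X]{0}v}\lesssim \norm[H^{-1}(X)^d]{\GRAD v}$. I would prove the uniform version by contradiction: if no uniform $C$ exists, there are polytopes $T_n$ (unit diameter, star-shaped w.r.t.\ the fixed ball $B$) and functions $v_n\in L^2(T_n)$ with $\norm[T_n]{v_n-\lproj[T_n]{0}v_n}=1$ but $\norm[H^{-1}(T_n)^d]{\GRAD v_n}\to 0$. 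Normalising so $\lproj[T_n]{0}v_n=0$, extend each $v_n$ by $0$ to the unit ball. Along a subsequence the domains $T_n$ converge (Hausdorff) to a limiting star-shaped polytope (or more generally a star-shaped Lipschitz domain) $T_\infty\supset B$, and $v_n\rightharpoonup v_\infty$ weakly in $L^2$ of the unit ball; one checks $v_\infty$ is supported in $\overline{T_\infty}$, has zero mean, and $\GRAD v_\infty=0$ on $T_\infty$, hence $v_\infty=0$ there. To reach a contradiction with $\norm{v_n}=1$, upgrade weak to strong convergence: this is where the uniform interior cone / star-shapedness with respect to a \emph{ball} is essential — it yields a uniform (in $n$) Ne\v{c}as/Rellich-type compactness, e.g.\ via a uniform $\norm[L^2]{v_n(\cdot+\xi)-v_n(\cdot)}\lesssim |\xi|\,\norm[H^{-1}]{\GRAD v_n}$ estimate on the ball $B$ using the cone property to control behaviour near $\partial T_n$, giving $v_n\to 0$ strongly in $L^2(B)$ and then, again by the uniform geometry, in $L^2(T_n)$.

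The step I expect to be the main obstacle is precisely this last one: making the Rellich–Ne\v{c}as compactness \emph{uniform over the family of polytopes}, i.e.\ obtaining translation-continuity or extension estimates for $v_n$ with constants depending only on $d$ and $\varrho$ and not on the (potentially wildly varying) number and shape of the faces of $T_n$. The cleanest route is probably to use the \emph{Bogovski\u\i\ operator}: on a domain star-shaped with respect to a ball of radius $\varrho h_T$, there is a right inverse $\mathcal B$ of the divergence, $\DIV\mathcal B\VEC g = g - \frac{1}{|T|}\int_T g$, bounded $L^2\to H^1_0$ with a constant depending only on $d$ and $\varrho$ (this uniform bound is itself a known but nontrivial fact, provable via Calder\'on–Zygmund theory for the explicit singular kernel and the John/star-shaped geometry). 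Given such $\mathcal B$, the Ne\v{c}as inequality is immediate and uniform: for zero-mean $v$, pick $\VEC g$ with $\DIV\VEC g = v$ and $\norm[H^1_0(T)^d]{\VEC g}\le C(d,\varrho)\norm[T]{v}$, then $\norm[T]{v}^2 = (v,\DIV\VEC g)_T = -\langle \GRAD v,\VEC g\rangle \le \norm[H^{-1}(T)^d]{\GRAD v}\,\norm[H^1_0(T)^d]{\VEC g} \le C(d,\varrho)\norm[H^{-1}(T)^d]{\GRAD v}\,\norm[T]{v}$. So in practice I would \emph{state and use the uniform bound on the Bogovski\u\i\ operator} (citing the relevant literature, e.g.\ Galdi's book or Dur\'an's survey) as the one genuinely hard input, and derive both \eqref{eq:necas} and \eqref{eq:korn} from it as above; the compactness argument is the fallback if one prefers not to invoke Bogovski\u\i\ quantitatively.
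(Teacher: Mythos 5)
Your proposal is correct, but it reaches the key uniform Ne\v{c}as inequality \eqref{eq:necas} by a genuinely different route than the paper. The paper first builds a uniform atlas for the boundaries of all polytopes star-shaped with respect to a ball of radius $\varrho h_T$ (Lemma \ref{lem:atlas}, controlling the Lipschitz constants of the local graphs via the star-shapedness), feeds this atlas into the Boyer--Fabrie estimate $\norm[T]{w}\le C_0(\norm[H^{-1}(T)]{w}+\norm[H^{-1}(T)^d]{\GRAD w})$ to get a constant depending only on $d$ and $\varrho$, and then removes the term $\norm[H^{-1}(T)]{w}$ by a compactness--contradiction argument in which the polytopes themselves converge (via the uniform atlas) and the contradiction is reached at the level of the $H^{-1}(B_1)$ norm, using only the compact embedding $L^2(B_1)\hookrightarrow H^{-1}(B_1)$ applied to zero-extensions; the Korn inequality \eqref{eq:korn} is then deduced from \eqref{eq:necas} by citing Tiero, exactly as your componentwise argument on the entries of $\GRADss\VEC{u}$ does explicitly. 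Your primary route instead invokes the uniform $L^2\to H^1_0$ bound on the Bogovski\u\i{} right inverse of the divergence on star-shaped domains and obtains \eqref{eq:necas} in three lines by duality; this is sound (the operator norm does depend only on $d$ and on the ratio of diameter to the radius of the star-shapedness ball, hence on $\varrho$ after your scaling step, which matches the paper's), and it is shorter and constructive, at the price of importing a nontrivial external result whose proof rests on Calder\'on--Zygmund theory. One caveat on your fallback compactness sketch: you normalise $\norm[T_n]{v_n}=1$ and aim for strong $L^2$ convergence, which, as you acknowledge, requires a uniform Rellich-type estimate over the family of polytopes that you do not supply; the paper avoids this difficulty precisely by normalising $\norm[H^{-1}(T_n)]{w_n}=1$ and deriving the contradiction from strong $H^{-1}(B_1)$ convergence together with the monotonicity $\norm[H^{-1}(B_1)]{\widetilde{w}_n}\ge\norm[H^{-1}(T_n)]{w_n}$, so if you ever need the compactness route you should adopt that normalisation. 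Since your main route stands on its own, this does not affect the validity of the proposal.
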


Before proving this proposition, we establish the following lemma that gives the existence of a uniform atlas for all mesh elements star-shaped with respect to every point in a ball of radius comparable to their diameter. In this lemma, for given positive number $\zeta$ and unit vector $\VEC{r}$, we let $B_\zeta$ be the open ball in $\Real^d$ centred at the origin and of radius $\zeta$, and we define the semi-infinite cylinder
$$
M(\VEC{r},\zeta)\coloneq\{
\VEC{x}\coloneq\VEC{x}^\bot+z\VEC{r}
\st\text{$\VEC{x}^\bot\in B_\zeta$ is orthogonal to $\VEC{r}$ and $z\in [0,+\infty)$}
  \}.
  $$
  Figure \ref{fig:element_sshaped} provides an illustration of this definition, as well as of other notations used in the proof of the lemma.

\begin{lemma}[Uniform atlas for star-shaped elements]\label{lem:atlas}
Let $\varrho>0$. There exists a finite number of unit vectors $(\VEC{r}_1,\ldots,\VEC{r}_m)$ in $\Real^d$ and a real number $L>0$, all depending only on $d$ and $\varrho$, such that $B_1\subset\bigcup_{l=1}^m M(\VEC{r}_l,\varrho/2)$ and, if $T$ is a polytope of $\Real^d$ contained in $B_1$ and star-shaped with respect to every point in $B_{\varrho}$, for any $l=1,\ldots,m$,
\[
T\cap M(\VEC{r}_l,\varrho/2)=\left\{
\VEC{x}=(x_1,\ldots,x_d)\in M(\VEC{r}_l,\varrho/2)\,:\,x_d\le \varphi_l(x_1,\ldots,x_{d-1})
\right\},
\]
where the system of orthogonal coordinates $(x_1,\ldots,x_d)$ is chosen such that $x_d$ is the coordinate along $\VEC{r}_l$, $H_d=\{\VEC{x}\in\Real^d\,:\,x_d=0\}$ is the horizontal hyperplane in this system of coordinates, and $\varphi_l:B_{\varrho/2}\cap H_d\to \Real$ is a Lipschitz-continuous function with Lipschitz constant bounded by $L$.
\end{lemma}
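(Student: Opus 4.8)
The plan is to construct the directions $(\VEC{r}_l)_{l=1}^m$ from a sufficiently fine finite angular net of the unit sphere $S^{d-1}$, and then, for each fixed element $T$ and each fixed $\VEC{r}_l$, to define $\varphi_l$ as the \emph{last exit height} of the ray in direction $\VEC{r}_l$ and to prove its Lipschitz continuity by a cone argument exploiting that $T$ is star-shaped with respect to the whole ball $B_\varrho$. For the directions, fix $\epsilon=\epsilon(d,\varrho)>0$ with $\sin\epsilon<\varrho/2$ and let $(\VEC{r}_l)_{l=1}^m$, $m=m(d,\varrho)$, be an $\epsilon$-net of $S^{d-1}$. Given $\VEC{x}\in B_1\setminus\{\VEC{0}\}$, choosing $l$ with angle $\vartheta$ between $\VEC{x}$ and $\VEC{r}_l$ at most $\epsilon$ gives $\VEC{x}\cdot\VEC{r}_l=|\VEC{x}|\cos\vartheta>0$ and $|\VEC{x}-(\VEC{x}\cdot\VEC{r}_l)\VEC{r}_l|=|\VEC{x}|\sin\vartheta<\varrho/2$, so $\VEC{x}\in M(\VEC{r}_l,\varrho/2)$; since $\VEC{0}\in M(\VEC{r}_l,\varrho/2)$ trivially, this proves $B_1\subset\bigcup_{l=1}^m M(\VEC{r}_l,\varrho/2)$. (Incidentally $B_\varrho\subseteq T\subseteq B_1$ forces $\varrho<1$.)

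Next I would fix $T$ and $l$ and use coordinates with $x_d$ along $\VEC{r}_l$ and $H_d=\{x_d=0\}$. For $x'\in B_{\varrho/2}\cap H_d$ I set $\varphi_l(x')\coloneq\sup\{z\ge 0\st x'+z\VEC{r}_l\in T\}$, which is well defined in $[0,1]$ because $x'\in B_{\varrho/2}\subseteq B_\varrho\subseteq T$ and $T\subseteq B_1$. Since $T$ is closed and star-shaped with respect to $x'$, the set $\{z\ge 0\st x'+z\VEC{r}_l\in T\}$ contains $0$ and, with every $z_0$, the whole interval $[0,z_0]$ (write $x'+z\VEC{r}_l$ as a convex combination of $x'$ and $x'+z_0\VEC{r}_l$); being closed, it equals $[0,\varphi_l(x')]$, which is precisely the claimed description of $T\cap M(\VEC{r}_l,\varrho/2)$.

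The core of the proof is the Lipschitz bound, for which I would take $L\coloneq 2/\varrho$. Since $T$ is closed and star-shaped with respect to every point of $B_\varrho$, one has $\operatorname{conv}(\{\VEC{a}\}\cup\overline{B_\varrho})\subseteq T$ for every $\VEC{a}\in T$. I claim that $\VEC{a}=(a',a_d)\in T\cap M(\VEC{r}_l,\varrho/2)$, $\VEC{b}=(b',b_d)\in M(\VEC{r}_l,\varrho/2)$ and $b_d\le a_d-L|a'-b'|$ force $\VEC{b}\in T$: put $\lambda\coloneq(a_d-b_d)/a_d\in(0,1]$ and $\VEC{q}\coloneq\VEC{a}+\lambda^{-1}(\VEC{b}-\VEC{a})$; a one-line computation gives $\VEC{q}=\bigl(a'+\tfrac{a_d}{a_d-b_d}(b'-a'),0\bigr)$, and from $a_d-b_d\ge L|a'-b'|$, $|a'|<\varrho/2$ and $a_d\le 1$ one gets $|\VEC{q}|<\varrho$, hence $\VEC{q}\in\overline{B_\varrho}$ and $\VEC{b}=(1-\lambda)\VEC{a}+\lambda\VEC{q}\in T$. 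Applying this with $a_d=\varphi_l(a')$ and $b_d=\varphi_l(a')-L|a'-b'|$ whenever the latter is nonnegative (the opposite case being trivial since then $\varphi_l(a')<L|a'-b'|\le L|a'-b'|+\varphi_l(b')$), and using the graph description of $T\cap M(\VEC{r}_l,\varrho/2)$, yields $\varphi_l(a')-\varphi_l(b')\le L|a'-b'|$; exchanging the roles of $a'$ and $b'$ shows that $\varphi_l$ is $L$-Lipschitz with $L=2/\varrho$ depending only on $\varrho$.

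I expect the main obstacle to be precisely this last step: recognising that star-shapedness with respect to a whole ball (not just one point) is what makes the boundary a Lipschitz graph, and extracting from the convex-hull/cone condition the explicit constant $L=2/\varrho$ that is independent of the element. The degenerate configurations — $a_d=0$, $\VEC{a}=\VEC{b}$, $a'=b'$, or $b_d<0$ — are immediate and are dispatched separately, and everything else only uses that points of $T$ have Euclidean norm at most $1$ and that $B_{\varrho/2}$ lies well inside $B_\varrho$.
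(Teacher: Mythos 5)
Your proof is correct, but it takes a genuinely different route from the paper's. The paper exploits the polytopal structure of $T$: it observes that the graph function $\varphi_l$ is piecewise affine, with gradient on each piece determined by the outward normal $\normal_{TF}$ of the corresponding face through $\normal_{TF}=(1+|\GRAD_{d-1}\varphi_l|^2)^{-\frac12}(-\GRAD_{d-1}\varphi_l,1)$, and then bounds the Lipschitz constant by proving $\normal_{TF}\SCAL\VEC{r}_l\ge\varrho/2$; the latter follows from the supporting-hyperplane consequence of star-shapedness, $(\VEC{a}-\VEC{x})\SCAL\normal_{TF}\ge 0$ for all $\VEC{x}\in B_\varrho$, tested with the specific point $\VEC{x}=\VEC{a}^\bot+(\varrho/2)\normal_{TF}$. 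You instead give a purely metric argument: you define $\varphi_l$ as the last-exit height of the ray, verify the hypograph description directly from star-shapedness with respect to $x'\in B_{\varrho/2}\subset B_\varrho$, and obtain the Lipschitz bound $L=2/\varrho$ by the cone construction $\VEC{b}=(1-\lambda)\VEC{a}+\lambda\VEC{q}$ with $\VEC{q}\in B_\varrho$ on the hyperplane $H_d$. What your approach buys is twofold: it never uses that $T$ is a polytope (it works verbatim for any closed set star-shaped with respect to every point of $B_\varrho$), and it makes explicit the step --- taken somewhat for granted in the paper --- that $T\cap M(\VEC{r}_l,\varrho/2)$ is a hypograph at all. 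What the paper's approach buys is a very short derivation once one accepts the piecewise-affine structure, at the price of a slightly worse constant ($\sqrt{4/\varrho^2-1}$ versus your $2/\varrho$), which is immaterial here. Both covering arguments (your explicit $\epsilon$-net with $\sin\epsilon<\varrho/2$ versus the paper's compactness assertion) are fine; yours is merely more explicit.
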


\begin{figure}
\begin{center}
\begin{tabular}{c@{\quad}c}
\input{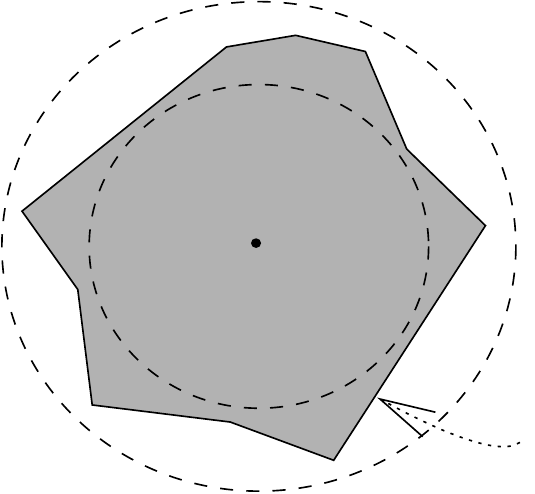_t}&\input{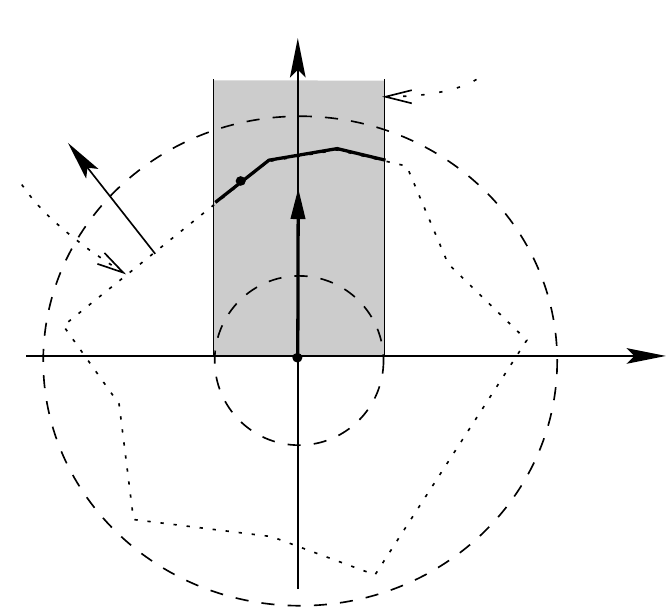_t}
\end{tabular}
\caption{Illustration of the proof of Lemma \ref{lem:atlas}}
\label{fig:element_sshaped}
\end{center}\end{figure}

\begin{proof} In the following, $a\lesssim b$ means that $a\le Cb$ with $C$ depending only on $d$ and $\varrho$. We first notice that, since $B_1$ is determined by $d$, there is a fixed number $m$ of unit vectors $(\VEC{r}_1,\ldots,\VEC{r}_m)$, depending only on $d$ and $\varrho$, such that $B_1\subset\bigcup_{l=1}^m M(\VEC{r}_l,\varrho/2)$. The proof is complete by showing that, in each $M(\VEC{r}_l,\varrho/2)$ and in the coordinates associated with $\VEC{r}_l$ as in the lemma, $T$ is the hypograph of a Lipschitz function $\varphi_l$, with a controlled Lipschitz constant. From thereon we drop the index $l$ for legibility.

Since the boundary of $T$ is made of the faces $F\in\Fh[T]$, this function $\varphi$ is piecewise affine on each affine part corresponding to a face $F$, and it holds that
\[
\normal_{TF}=\frac{(-\GRAD_{d-1}\varphi,1)}{\sqrt{1+|\GRAD_{d-1}\varphi|^2}},
\]
where $\GRAD_{d-1}\varphi$ is the gradient in $H_d$ of $\varphi$ with respect to its $(d-1)$ variables.
Hence, $\normal_{TF}\SCAL\VEC{r}=(1+|\GRAD_{d-1}\varphi|^2)^{-\frac12}$ and if we can prove that 
\begin{equation}\label{proof:atlas:1}
1\lesssim\normal_{TF}\SCAL\VEC{r}\qquad\forall F\in\Fh[T],
\end{equation}
then we have $|\GRAD_{d-1}\varphi|\lesssim 1$, that is, the uniform control of the Lipschitz constant of $\varphi$.

To prove \eqref{proof:atlas:1}, let $F\in\Fh[T]$, $\VEC{a}\in F\cap M(\VEC{r},\varrho/2)$ and let us translate the fact that $T$ is star-shaped with respect to every point in $B_{\varrho}$. Working as in the proof of \cite[Lemma B.1]{Droniou.Eymard.ea:17}, we see that this assumption forces $B_{\varrho}$ to be fully on one side of the hyperplane spanned by $F$, which translates into
\begin{equation}\label{proof:atlas:2}
(\VEC{a}-\VEC{x})\SCAL\normal_{TF}\ge 0\quad\forall \VEC{x}\in B_{\varrho}.
\end{equation}
On the other, hand since $\VEC{a}\in M(\VEC{r},\varrho/2)$, we have $\VEC{a}=\VEC{a}^\bot+z \VEC{r}$ with $z>0$ and $\VEC{a}^\bot$ orthogonal to $\VEC{r}$ and in $B_{\varrho/2}$. 
Apply \eqref{proof:atlas:2} to $\VEC{x}=\VEC{a}^\bot + (\varrho/2)\normal_{TF}$, which
belongs to $B_{\varrho}$ since $|\VEC{a}^\bot|\le \varrho/2$.
Noticing that $\VEC{a}-\VEC{x}=z\VEC{r}-(\varrho /2)\normal_{TF}$, this yields
\begin{equation}\label{proof:atlas:3}
0\le z \VEC{r}\SCAL\normal_{TF} - \frac{\varrho}{2}.
\end{equation}
Since $\VEC{a}\in B_{1}$ and $\VEC{r}$ is a unit vector, we have $0<z\le 1$
and \eqref{proof:atlas:3} therefore gives $\VEC{r}\SCAL\normal_{TF}=z^{-1}(z\VEC{r}\SCAL\normal_{TF})\ge z^{-1} \varrho /2 \ge \varrho/2$. The proof of \eqref{proof:atlas:1} is complete. \end{proof}

We are now in a position to prove the uniform Ne\v{c}as and second Korn inequalities.

\begin{proof}[Proof of Proposition \ref{prop:unif.korn}]

  The reasoning of \cite{Tiero:99} shows that if the Ne\v{c}as inequality \eqref{eq:necas} holds with a certain $C$, then the second Korn inequality \eqref{eq:korn} holds with the constant $\sqrt{1+2C^2}$. Hence, we only have to prove that the mesh elements $T$ considered in the proposition satisfy \eqref{eq:necas} with a constant $C$ that depends only on $d$ and $\varrho$. 
This is achieved proceeding in two steps: first, we scale the problem in order to reduce the proof to the case of a polytopal set contained in the unit ball and star-shaped with respect to $B_\varrho$; second, we prove the sought result in the latter case.

\medskip

(i) \emph{Scaling.} Since the inequality is obviously invariant by translation, we can assume that $T$ is star-shaped with respect to every point in $B_{\varrho h_T}$. We then scale $T$ so that its diameter is equal to $1$. Precisely, define $\sca{T}=T/h_T$ and, for $f\in L^2(T)$, set $\sca{f}\in L^2(\sca{T})$ such that $\sca{f}(\sca{x})=f(h_T \sca{x})$ for all $\sca{x}\in\sca{T}$. Then $h_{\sca{T}}=1$ and $\sca{T}$ is star-shaped with respect to every point in $B_\varrho$. Moreover, by the change of variable $\sca{T}\ni\sca{x}\mapsto x=h_T\sca{x}\in T$, 
\begin{equation}\label{scaling.int}
\int_T f=h_T^d \int_{\sca{T}}\sca{f}
\end{equation}
and, if $f\in H^1(T)$, 
\begin{equation}\label{scaling.grad}
\GRAD\sca{f}=h_T\sca{\GRAD f}.
\end{equation}
These properties show that, for any $v\in L^2(T)$,
\begin{equation}\label{scaling.1}
\norm[T]{v-\lproj[T]{0}v}=h_T^{d/2}\norm[\sca{T}]{\sca{v}-\lproj[\sca{T}]{0}\sca{v}},
\end{equation}
and that
\begin{equation}
  \begin{aligned}
    \norm[H^{-1}(T)^d]{\GRAD v} &=\sup_{\VEC{\psi}\in H^1_0(T)^d}\frac{\int_T
      v\DIV\VEC{\psi}}{\norm[H^1_0(T)^d]{\VEC{\psi}}} =\sup_{\VEC{\psi}\in
      H^1_0(T)^d}\frac{h_T^d\int_{\sca{T}}
      \sca{v}~\sca{\DIV\VEC{\psi}}}{\norm[T]{\GRAD\VEC{\psi}}}
		\\ 
		&=\sup_{\VEC{\psi}\in
      H^1_0(T)^d}\frac{h_T^d\int_{\sca{T}}
      \sca{v}h_T^{-1}\DIV\sca{\VEC{\psi}}}{h_T^{d/2}\norm[\sca{T}]{\sca{\GRAD\VEC{\psi}}}}
    =\sup_{\sca{\VEC{\psi}}\in
      H^1_0(\sca{T})^d}\frac{h_T^dh_T^{-1}\int_{\sca{T}}
      \sca{v}\DIV\sca{\VEC{\psi}}}{h_T^{d/2}h_T^{-1}\norm[\sca{T}]{\GRAD\sca{\VEC{\psi}}}}
    =h_T^{d/2} \norm[H^{-1}(\sca{T})]{\GRAD \sca{v}},
    \label{scaling.2}
  \end{aligned}
\end{equation}
where we have used, in sequence, the definition of the norm in $H^{-1}(T)^d$, 
the change of variable \eqref{scaling.int} with $f=v\DIV\VEC{\psi}$ together with the
definition $\norm[H^1_0(T)^d]{{\cdot}}\coloneq\norm[T]{\GRAD {\cdot}}$ of
the norm $H^1_0(T)^d$, \eqref{scaling.grad} with $f=$ components of $\VEC{\psi}$ and
the change of variables \eqref{scaling.int} with $f=|\GRAD\VEC{\psi}|$,
once again the relation \eqref{scaling.grad} with $f=$ components of $\VEC{\psi}$,
and finally the definition of $\norm[H^{-1}(\sca{T})^d]{\GRAD\sca{v}}$.
If we prove \eqref{eq:necas} for all polytopal sets $\sca{T}$ of diameter $1$ and star-shaped with respect to every point in $B_\varrho$, with $C$ depending only on $d$ and $\varrho$, the relations \eqref{scaling.1}--\eqref{scaling.2} show that \eqref{eq:necas} also holds for $T$ with the same constant. To simplify the notations, in the following we drop the hat and we simply write $T$ and $v$ for $\sca{T}$ and $\sca{v}$. In other words, we have reduced the proof to the case where $T$ is a polytopal set contained in $B_1$ and star-shaped with respect to every point in $B_\varrho$.

\medskip

(ii) \emph{Proof of \eqref{eq:necas} and \eqref{eq:korn} in the scaled case.}
\cite[Theorem IV.1.1]{Boyer.Fabrie:13} establishes the existence of $C_T$ such that
\begin{equation}\label{est:Hm1.0}
\norm[T]{w}\le C_T\left(\norm[H^{-1}(T)]{w}+\norm[H^{-1}(T)]{\GRAD w}\right)\qquad\forall w\in L^2(T).
\end{equation}
The proof \cite[Theorem IV.1.1]{Boyer.Fabrie:13} gives a clear dependency on the constant $C_T$ in terms of an atlas of $\partial T$. For any $T$ contained in $B_1$ and star-shaped with respect to every point in $B_\varrho$, Lemma \ref{lem:atlas} provides an atlas of $\partial T$, whose open covering and domains and Lipschitz constant of the maps depend only on $d$ and $\varrho$. Using this atlas in the proof of \cite[Theorem IV.1.1]{Boyer.Fabrie:13}, we see that \eqref{est:Hm1.0} holds with $C_T=C_0$ depending only on $d$ and $\varrho$. Applying this inequality to $w=v-\lproj[T]{0}v$ (that has a zero integral over $T$), the Ne\v{c}as estimate \eqref{eq:necas} follows if we show that
\begin{equation}\label{est:Hm1.1}
\norm[H^{-1}(T)]{w}\le C_1 \norm[H^{-1}(T)^d]{\GRAD w}\qquad\forall w\in L^2(T)\mbox{ such that }\int_T w=0
\end{equation}
with $C_1$ depending only on $d$ and $\varrho$. This estimate is established in \cite[Proposition IV.1.7]{Boyer.Fabrie:13}, but with a proof that does not show the independence of $C_1$ with respect to the domain $T$. We adapt here this proof to show that \eqref{est:Hm1.1} holds with a constant that is uniform with respect to the mesh element $T$.

The proof proceeds by contradiction. Assume that \eqref{est:Hm1.1} does not hold uniformly with respect to $T$. Then there is a sequence $(T_n,w_n)_{n\in\Natural}$ such that $T_n$ is contained in $B_1$ and is star-shaped with respect to every point in $B_\varrho$, $w_n\in L^2(T_n)$ has a zero average over $T_n$, and
\begin{equation}\label{est:Hm1.2}
\norm[H^{-1}(T_n)]{w_n}\ge n\norm[H^{-1}(T_n)^d]{\GRAD w_n}.
\end{equation}
Replacing $w_n$ with $w_n/\norm[H^{-1}(T_n)]{w_n}$, we can also assume that
\begin{equation}\label{est:Hm1.2.1}
\norm[H^{-1}(T_n)]{w_n}=1.
\end{equation}
Let $\widetilde{w}_n$ be the extension of $w_n$ to $B_1$ by $0$ outside $T_n$. By \eqref{est:Hm1.0} (in which we recall that $C_T=C_0$ only depends on $\varrho$ and $d$), $\norm[T_n]{w_n}$ is bounded, and so $\widetilde{w}_n$ is bounded in $L^2(B_1)$.
Hence, $L^2(B_1)$ being compactly embedded in $H^{-1}(B_1)$, upon extracting a subsequence we can assume the existence of $w\in L^2(B_1)$ such that
\begin{equation}\label{est:Hm1.cv}
\widetilde{w}_n\to w\mbox{ weakly in $L^2(B_1)$ and strongly in $H^{-1}(B_1)$ as $n\to\infty$}.
\end{equation}
The weak convergence in $L^2(B_1)$ together with the relation $0=\int_{T_n}w_n=\int_{B_1}\widetilde{w}_n$ shows that
\begin{equation}\label{est:Hm1.average}
\int_{B_1}w=0.
\end{equation}

Considering the uniform atlas of $\partial T_n$ (independent of $n$) given by Lemma \ref{lem:atlas}, we see that the corresponding maps $(\varphi_{l,n})_{l=1,\ldots,m}$ are uniformly Lipschitz, with a constant not depending on $n$. Hence, upon extracting another subsequence, we can assume that these maps converge uniformly to some Lipschitz functions $(\varphi_l)_{l=1,\ldots,m}$. These Lipschitz functions define a Lipschitz open set $U$ and, by uniform convergence of the maps, the following two properties hold: 
\begin{enumerate}[(i)]
\item the characteristic function $\mathbf{1}_{T_n}$ of $T_n$ converges strongly in $L^2(B_1)$ towards the characteristic function $\mathbf{1}_U$ of $U$, and
\item for any $\VEC{\psi}\in C^\infty_c(U)^d$ there is an $N(\VEC{\psi})\in\Natural$ such that $\mathrm{supp}(\VEC{\psi})\subset T_n$ for all $n\ge N(\VEC{\psi})$.
\end{enumerate}
We exploit Property (i) by writing $\widetilde{w}_n=\mathbf{1}_{T_n}\widetilde{w}_n$ (since $\widetilde{w}_n$ is equal to zero outside $T_n$), and by passing to the $L^2$-weak limit in the left-hand side and the weak/strong distributional limit in the right-hand side, to see that $w=\mathbf{1}_U w$. In particular, this shows that $w=0$ outside $U$ and, together with \eqref{est:Hm1.average}, that
\begin{equation}\label{est:Hm1.average.2}
\int_{U}w=0.
\end{equation}
Consider now Property (ii) of $(T_n)_{n\in\Natural}$. Fixing $\VEC{\psi}\in C^\infty_c(U)^d$, for any $n\ge N(\VEC{\psi})$ we can write
\begin{align*}
\left|\int_{B_1}\widetilde{w}_n\DIV\VEC{\psi}\right|=\left|\int_{T_n}w_n\DIV\VEC{\psi}\right|={}&\left|-\langle \GRAD w_n,\VEC{\psi}\rangle_{H^{-1}(T_n),H^1_0(T_n)}\right|\\
\le{}& \norm[H^{-1}(T_n)^d]{\GRAD w_n}\norm[H^1_0(T_n)^d]{\VEC{\psi}}
\le \frac{1}{n}\norm[H^1_0(U)^d]{\VEC{\psi}}
\end{align*}
where the first line follows from the definitions of $\widetilde{w}_n$ and $\GRAD w_n$ together with the fact that $\VEC{\psi}\in C^\infty_c(T_n)^d$ (since $\mathrm{supp}(\VEC{\psi})\subset T_n$), and the second line is a consequence of \eqref{est:Hm1.2}--\eqref{est:Hm1.2.1} and of the fact that $\VEC{\psi}$ has a compact support in $U$.
Combined with the weak convergence in \eqref{est:Hm1.cv} this shows that
\[
\int_{B_1} w\DIV\VEC{\psi}=0.
\]
Since this is true for any $\VEC{\psi}\in C^\infty_c(U)^d$, this proves that $\GRAD w=0$ in $\mathcal D'(U)^d$. By construction, $U$ is connected and thus $w$ is constant over $U$. Invoking \eqref{est:Hm1.average.2}, we deduce that $w=0$ on $U$ and thus, since $w=0$ outside $U$, that $w=0$ on $B_1$. The strong convergence in \eqref{est:Hm1.cv} therefore shows that
\begin{equation}\label{est:Hm1.cvzero}
\widetilde{w}_n\to 0\mbox{ strongly in $H^{-1}(B_1)$ as $n\to+\infty$.}
\end{equation}
To conclude the proof, recall \eqref{est:Hm1.2.1} and notice that any function $\varphi\in H^1_0(T_n)$ can be considered, after extension by $0$ outside $T_n$, as a function in $H^1_0(B_1)$ with $\norm[H^1_0(T_n)]{\varphi}=\norm[H^1_0(B_1)]{\varphi}$. Hence, by definition of the norms in $H^{-1}(B_1)$ and $H^{-1}(T_n)$,
\[
\norm[H^{-1}(B_1)]{\widetilde{w}_n}=\sup_{\varphi\in H^1_0(B_1)}\frac{\int_{B_1}\widetilde{w}_n\varphi}{\norm[H^1_0(B_1)]{\varphi}}\\
\ge\sup_{\varphi\in H^1_0(T_n)}\frac{\int_{T_n}w_n\varphi}{\norm[H^1_0(T_n)]{\varphi}}=\norm[H^{-1}(T_n)]{w_n}=1.
\]
On the other hand, property \eqref{est:Hm1.cvzero} shows that the left-hand side goes to $0$ as $n\to+\infty$, which establishes the sought contradiction. \end{proof}

\begin{remark}[Second Korn inequality in $L^{q}$]
  Following \cite[Remark IV.1.1]{Boyer.Fabrie:13}, we could as well establish a uniform local second Korn inequality in $L^q$ spaces, with $1<q<+\infty$, rather than in the $L^2$ space.
\end{remark}

\subsection{Approximation properties of the strain projector}\label{app:elproj.approx}

Let $T$ be a polytopal open connected set of $\Real^d$ and $\ell\ge 1$ be a given integer.
The strain projector $\elproj[T]{\ell}:H^1(T)^d\to\Poly{\ell}(T)^d$ is such that, for any $\VEC{v}\in H^1(T)^d$,
\begin{subequations}\label{eq:elproj}
  \begin{gather}\label{eq:elproj:pde}
    (\GRADs\elproj[T]{\ell}\VEC{v},\GRADs\VEC{w})_T = (\GRADs\VEC{v},\GRADs\VEC{w}_T)_T
    \qquad\forall\VEC{w}\in\Poly{\ell}(T)^d,
    \\ \label{eq:elproj:closure}
    \int_T\elproj[T]{\ell}\VEC{v}=\int_T\VEC{v},\qquad
    \int_T\GRADss\elproj[T]{\ell}\VEC{v}=\int_T\GRADss\VEC{v}.
  \end{gather}
\end{subequations}
By the Riesz representation theorem in $\GRADs\Poly{\ell}(T)^d$ for the inner product of $L^2(T)^{d\times d}$, relation \eqref{eq:elproj:pde} defines a unique element $\GRADs\elproj[T]{\ell}\VEC{v}$, and thus a unique polynomial $\elproj[T]{\ell}\VEC{v}$ after accounting for the additional conditions in \eqref{eq:elproj:closure} (which prescribe a rigid-body motion).
%
\begin{theorem}[Approximation properties of the strain projector]\label{thm:elproj:approx}
  Let $\varrho>0$ and take $T\subset \Real^d$ a polytopal set, of diameter $h_T$, that is star-shaped with respect to every point of a ball of radius $\varrho h_T$.
  Let two integers $\ell\ge 1$ and $s\in\{1,\ldots,\ell+1\}$ be given.
  Then, there exists a real number $C>0$ depending only on $d$, $\varrho$, $\ell$, and $s$ such that, for all $m\in\{0,\ldots,s\}$ and all $\VEC{v}\in H^s(T)^d$,
  \begin{equation}\label{eq:elproj:approx}
    \seminorm[H^m(T)^d]{\VEC{v} - \elproj[T]{\ell}\VEC{v}}
    \le C h_T^{s-m}\seminorm[H^s(T)^d]{\VEC{v}}.
  \end{equation}
\end{theorem}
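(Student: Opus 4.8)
The natural strategy is to invoke the abstract framework for projectors on local polynomial spaces from \cite[Section 2.1]{Di-Pietro.Droniou:17*1}, which reduces the approximation estimate \eqref{eq:elproj:approx} to two ingredients: (a) a boundedness (stability) property of the projector $\elproj[T]{\ell}$ in the $H^m$-seminorm, uniformly in $T$ for the class of star-shaped elements considered, and (b) polynomial consistency, namely $\elproj[T]{\ell}\VEC{w}=\VEC{w}$ for all $\VEC{w}\in\Poly{\ell}(T)^d$, which is immediate from the definition \eqref{eq:elproj}. Once (a) is established, the Dupont--Scott theory \cite{Dupont.Scott:80} (as packaged in that framework) yields \eqref{eq:elproj:approx} by the classical argument: write $\VEC{v}-\elproj[T]{\ell}\VEC{v} = (\VEC{v}-\VEC{q}) - \elproj[T]{\ell}(\VEC{v}-\VEC{q})$ for $\VEC{q}$ a suitable averaged Taylor polynomial of $\VEC{v}$ of degree $s-1\le \ell$, bound each term using stability and the Bramble--Hilbert estimate, and track the homogeneity in $h_T$ by the usual scaling argument.

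\textbf{Key steps.} First I would record the consistency property (b), which is trivial. Second, and this is the crux, I would prove the uniform $H^m$-stability of $\elproj[T]{\ell}$: there is $C$ depending only on $d,\varrho,\ell,s$ with $\seminorm[H^m(T)^d]{\elproj[T]{\ell}\VEC{v}}\le C\left(\sum_{j=0}^{s}h_T^{j-m}\seminorm[H^j(T)^d]{\VEC{v}}\right)$ for all $\VEC{v}\in H^s(T)^d$. To do so I would first scale $T$ to diameter $1$ (as in the proof of Proposition \ref{prop:unif.korn}), so it suffices to treat a polytope contained in $B_1$ and star-shaped with respect to $B_\varrho$, and to establish the estimate with all $h_T$-powers equal to $1$. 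On such an element, I would start from \eqref{eq:elproj:pde}, which together with the Riesz representation shows $\norm[T]{\GRADs\elproj[T]{\ell}\VEC{v}}\le\norm[T]{\GRADs\VEC{v}}$, and then recover control of the full polynomial $\elproj[T]{\ell}\VEC{v}$ (not just its symmetric gradient) by combining: the closure conditions \eqref{eq:elproj:closure} to fix the rigid-body-motion component; the uniform second Korn inequality \eqref{eq:korn} of Proposition \ref{prop:unif.korn} to pass from $\norm[T]{\GRADs{\cdot}}$ to $\norm[T]{\GRAD{\cdot}}$ modulo the projected skew gradient; a Poincar\'e--Wirtinger inequality on $T$ (uniform over the star-shaped class, again by the atlas of Lemma \ref{lem:atlas}) to pass from $\norm[T]{\GRAD{\cdot}}$ to $\norm[T]{{\cdot}}$; and an inverse inequality on the finite-dimensional space $\Poly{\ell}(T)^d$ to bound higher $H^m$-seminorms by the $L^2$-norm — the relevant inverse estimate being uniform over regular/star-shaped polytopes. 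Third, with stability in hand, I would run the averaged-Taylor-polynomial argument on the scaled element, then scale back to general $h_T$, obtaining \eqref{eq:elproj:approx}.

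\textbf{Main obstacle.} The delicate point is the uniform $H^m$-stability in Step 2, specifically ensuring that \emph{every} constant entering the chain (Korn, Poincar\'e, inverse inequality, and the Riesz step) is uniform over the class of polytopes star-shaped with respect to a ball of radius $\varrho h_T$. The Korn constant is exactly what Proposition \ref{prop:unif.korn} provides; the Poincar\'e constant follows from the same uniform atlas (Lemma \ref{lem:atlas}) or from a compactness/contradiction argument parallel to the one used for \eqref{est:Hm1.1}; the inverse inequality on $\Poly{\ell}(T)^d$ is standard for regular polytopal elements since such $T$ contains and is contained in balls of radius comparable to $h_T$. A secondary technical nuisance is handling the case $m\ge 2$, where $H^m$-seminorms of a degree-$\ell$ polynomial must be controlled: this is purely an inverse-inequality matter on the fixed-dimensional space $\Poly{\ell}(T)^d$ and introduces only $\varrho$- and $\ell$-dependent constants. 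Finally, one must check that the averaged Taylor polynomial $\VEC{q}$ can be built with Bramble--Hilbert constants uniform over the star-shaped class, which is precisely the content of \cite{Dupont.Scott:80} and is already the backbone of \cite[Lemmas 3.4, 3.6]{Di-Pietro.Droniou:17}; reusing that machinery verbatim closes the argument.
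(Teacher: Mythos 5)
Your proposal is correct and follows essentially the same route as the paper: both reduce \eqref{eq:elproj:approx} to the abstract framework of \cite[Lemma 2.1]{Di-Pietro.Droniou:17*1}, with the crux being the uniform $H^1$- and $L^2$-boundedness of $\elproj[T]{\ell}$ obtained from the energy estimate $\norm[T]{\GRADs\elproj[T]{\ell}\VEC{v}}\le\norm[T]{\GRADs\VEC{v}}$, the closure conditions \eqref{eq:elproj:closure}, the uniform second Korn inequality \eqref{eq:korn}, and a local Poincar\'e inequality. The only difference is cosmetic: you re-derive pieces (averaged Taylor polynomials, inverse inequalities for $m\ge 2$) that the cited abstract lemma already packages, whereas the paper only needs to verify boundedness for $m\in\{0,1\}$ and lets that lemma do the rest.
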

\begin{proof}
  We apply the abstract results of \cite[Lemma 2.1]{Di-Pietro.Droniou:17*1}, which readily extend to the vector case.
  This requires to prove that it holds, for all $\VEC{v}\in H^1(T)^d$,
  \begin{subequations}
    \begin{alignat}{2}
      \label{eq:H1.boundedness}
      \seminorm[H^1(T)^d]{\elproj[T]{\ell}\VEC{v}}
      &\lesssim\seminorm[H^1(T)^d]{\VEC{v}},
      &\qquad&\text{if $m\ge 1$},
      \\
      \label{eq:L2.boundedness}
      \norm[T]{\elproj[T]{\ell}\VEC{v}}
      &\lesssim\norm[T]{\VEC{v}} + h_T \seminorm[H^1(T)^d]{\VEC{v}}
      &\qquad&\text{if $m=0$}.  
    \end{alignat}
  \end{subequations}
  Inside the proof, we note $a\lesssim b$ the inequality $a\le Cb$ with generic positive constant $C$ having the same dependencies as in \eqref{eq:elproj:approx}.
  \medskip\\
  (i) \emph{The case $m\ge 1$.} We start by observing that equation \eqref{eq:elproj:pde} implies
  \begin{equation}\label{eq:H1.boundedness:basic}
    \norm[T]{\GRADs\elproj[T]{\ell}\VEC{v}}\le\norm[T]{\GRADs\VEC{v}},
  \end{equation}
  as can be easily checked letting $\VEC{w}=\elproj[T]{\ell}\VEC{v}$ and using a Cauchy--Schwarz inequality in the right-hand side.
  We can now write
  \begin{equation}\label{eq:H1.boundedness.basic}
    \begin{aligned}
      \seminorm[H^1(T)^d]{\elproj[T]{\ell}\VEC{v}}
      &=
      \norm[T]{\GRAD\elproj[T]{\ell}\VEC{v}}
      \\
      &\le
      \norm[T]{\GRAD\elproj[T]{\ell}\VEC{v}-\vlproj[T]{0}(\GRADss\elproj[T]{\ell}\VEC{v})}
      + \norm[T]{\vlproj[T]{0}(\GRADss\VEC{v})}
      \\
      &\lesssim
      \norm[T]{\GRADs\elproj[T]{\ell}\VEC{v}} + \norm[T]{\vlproj[T]{0}(\GRADss\VEC{v})}
      \\
      &\lesssim
      \norm[T]{\GRADs\VEC{v}} + \norm[T]{\GRADss\VEC{v}}
      \lesssim\seminorm[H^1(T)^d]{\VEC{v}},
    \end{aligned}
  \end{equation}
  where we have inserted $\MAT{0} =\vlproj[T]{0}(\GRADss\VEC{v})-\vlproj[T]{0}(\GRADss\elproj[T]{\ell}\VEC{v})$ (see \eqref{eq:elproj:closure}) into the norm and used the triangle inequality to pass to the second line, the local Korn inequality \eqref{eq:korn} to pass to the third line, and we have invoked \eqref{eq:H1.boundedness:basic} together with the boundedness of $\vlproj[T]{0}$ expressed by \eqref{eq:lproj:boundedness} with $s=0$ to conclude.
  This proves \eqref{eq:H1.boundedness}.
  \medskip\\
  (ii) \emph{The case $m=0$.}
  We can write
  $$
  \norm[T]{\elproj[T]{\ell}\VEC{v}}
  \le \norm[T]{\VEC{v}} + \norm[T]{\elproj[T]{\ell}\VEC{v}-\VEC{v}}
  \lesssim \norm[T]{\VEC{v}} + h_T \norm[T]{\GRAD(\elproj[T]{\ell}\VEC{v}-\VEC{v})}
  \lesssim \norm[T]{\VEC{v}} + h_T \seminorm[H^1(T)^d]{\VEC{v}},
  $$
  where we have inserted $\pm\VEC{v}$ into the norm and used the triangle inequality in the first bound, we have used a local Poincar\'e inequality (resulting from the approximation properties \eqref{eq:lproj:approx} of $\vlproj[T]{0}$ with $s=1$ and $m=0$) for the zero-average function $\elproj[T]{\ell}\VEC{v}-\VEC{v}$ in the second bound, and we have concluded using the triangle inequality together with \eqref{eq:H1.boundedness} to write $\norm[T]{\GRAD(\elproj[T]{\ell}\VEC{v}-\VEC{v})}\le\norm[T]{\GRAD\elproj[T]{\ell}\VEC{v}}+\norm[T]{\GRAD\VEC{v}}\lesssim\seminorm[H^1(T)^d]{\VEC{v}}$.
  This proves \eqref{eq:L2.boundedness}.
\end{proof}
\begin{remark}[Uniform Korn inequality]\label{rem:uniform.korn}
  Notice that a crucial point in the first step of the above proof is that the constant in the Korn inequality invoked to pass to the third line in \eqref{eq:H1.boundedness.basic} only depends on $d$ and $\varrho$.
  This fact is non-trivial, as seen in \ref{app:korn}.
\end{remark}
\begin{corollary}[Trace approximation properties of the strain projector]\label{cor:elproj:approx.trace}
  Assume that $T\subset \Real^d$ is a polytopal set which admits a partition $\mathcal{S}_T$ into disjoint simplices $S$ of diameter $h_S$ and inradius $r_S$, and that there exists a real number $\varrho>0$ such that, for all $S\in\mathcal{S}_T$,
  \[
  \varrho^2 h_S\le \varrho h_T\le r_S.
  \]
  Assume also that $T$ is star-shaped with respect to every point in a ball of radius $\varrho h_T$.
  Let $s\in\{1,\ldots,\ell+1\}$, and denote by $\Fh[T]$ the set of faces of $T$.
  Then, there exists a real number $C$ depending only on $d$, $\varrho$, $\ell$, and $s$ such that, for all $m\in\{0,\ldots,s-1\}$ and all $\VEC{v}\in H^s(T)^d$,
  \begin{equation}\label{eq:elproj:approx.trace}
    \seminorm[{H^m(\Fh[T])^d}]{\VEC{v}-\elproj[T]{\ell}\VEC{v}}
    \le C h_T^{s-m-\frac12}\seminorm[H^s(T)^d]{\VEC{v}}.
  \end{equation}
  Here, $H^m(\Fh[T])\coloneq\{v\in L^2(\partial T)\st v_{|F}\in H^m(F)^d\mbox{ for all } F\in\Fh[T]\}$ is the broken Sobolev space on $\Fh[T]$ and $\seminorm[{H^m(\Fh[T])^d}]{{\cdot}}$ the corresponding broken seminorm.
\end{corollary}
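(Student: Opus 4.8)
The plan is to combine the continuous trace inequality on $T$ with the volume approximation estimate \eqref{eq:elproj:approx} already established in Theorem~\ref{thm:elproj:approx}. First I would invoke a continuous trace inequality of the form
\begin{equation*}
  \norm[\partial T]{\varphi}^2 \lesssim h_T^{-1}\norm[T]{\varphi}^2 + h_T\seminorm[H^1(T)]{\varphi}^2
  \qquad\forall\varphi\in H^1(T),
\end{equation*}
with a constant depending only on $d$ and $\varrho$; this is exactly the estimate \eqref{eq:trace.T} referred to elsewhere in the paper, and its uniformity in $T$ under the simplicial-partition assumption on $T$ follows by summing the standard simplex trace inequality over $S\in\mathcal{S}_T$ and using the comparability $\varrho^2 h_S\le\varrho h_T\le r_S$ to control the geometric constants. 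Applying this to each partial derivative $\partial^\alpha(\VEC{v}-\elproj[T]{\ell}\VEC{v})$ with $|\alpha|=m\le s-1$ gives, face by face and then summing,
\begin{equation*}
  \seminorm[{H^m(\Fh[T])^d}]{\VEC{v}-\elproj[T]{\ell}\VEC{v}}^2
  \lesssim h_T^{-1}\seminorm[H^m(T)^d]{\VEC{v}-\elproj[T]{\ell}\VEC{v}}^2
  + h_T\seminorm[H^{m+1}(T)^d]{\VEC{v}-\elproj[T]{\ell}\VEC{v}}^2.
\end{equation*}

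Next I would bound the two volume terms on the right-hand side using Theorem~\ref{thm:elproj:approx}. For the first term, \eqref{eq:elproj:approx} with exponent $m$ gives $\seminorm[H^m(T)^d]{\VEC{v}-\elproj[T]{\ell}\VEC{v}}\lesssim h_T^{s-m}\seminorm[H^s(T)^d]{\VEC{v}}$, so $h_T^{-1}\seminorm[H^m(T)^d]{\VEC{v}-\elproj[T]{\ell}\VEC{v}}^2\lesssim h_T^{2(s-m)-1}\seminorm[H^s(T)^d]{\VEC{v}}^2$. For the second term, since $m+1\le s$ we may apply \eqref{eq:elproj:approx} with exponent $m+1$ to obtain $\seminorm[H^{m+1}(T)^d]{\VEC{v}-\elproj[T]{\ell}\VEC{v}}\lesssim h_T^{s-m-1}\seminorm[H^s(T)^d]{\VEC{v}}$, whence $h_T\seminorm[H^{m+1}(T)^d]{\VEC{v}-\elproj[T]{\ell}\VEC{v}}^2\lesssim h_T^{2(s-m)-1}\seminorm[H^s(T)^d]{\VEC{v}}^2$. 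Adding the two contributions and taking the square root yields \eqref{eq:elproj:approx.trace}, with a constant depending only on $d$, $\varrho$, $\ell$, and $s$ since every inequality used has that dependence.

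The only genuinely delicate point is the uniformity of the trace constant with respect to $T$: a general polytope can have arbitrarily small or badly shaped faces, so one cannot simply quote a shape-regular trace inequality. This is precisely why the hypothesis of a simplicial partition $\mathcal{S}_T$ of $T$ with $\varrho^2 h_S\le\varrho h_T\le r_S$ is imposed. On each simplex $S$ the classical continuous trace inequality holds with a constant depending only on $d$ and the shape-regularity of $S$ (which is controlled by $\varrho$); summing over $S\in\mathcal{S}_T$ and noting that $\partial T\subset\bigcup_{S}\partial S$, so that each face $F\in\Fh[T]$ is covered by faces of simplices in $\mathcal{S}_T$, delivers the uniform trace estimate on $T$ above. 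I expect this geometric bookkeeping — verifying that the simplex-wise constants aggregate to a $T$-uniform constant — to be the main obstacle, while the remainder of the argument is a routine combination of the trace inequality with the already-proved estimate \eqref{eq:elproj:approx}.
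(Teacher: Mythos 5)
Your proposal is correct and follows essentially the same route as the paper: apply the uniform continuous trace inequality \eqref{eq:trace.T} to each derivative $\partial^{\VEC{\alpha}}(\VEC{v}-\elproj[T]{\ell}\VEC{v})$ with $|\VEC{\alpha}|=m$, then invoke \eqref{eq:elproj:approx} with exponents $m$ and $m+1$. The only difference is that you sketch the aggregation argument for the trace constant over the simplicial partition, whereas the paper simply cites \cite[Lemma 1.49]{Di-Pietro.Ern:12} for that uniformity.
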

\begin{proof}
  Under the assumptions on $T$, we have the following trace inequality (see, e.g., \cite[Lemma 1.49]{Di-Pietro.Ern:12}): For all $w\in H^1(T)$,
  \begin{equation}\label{eq:trace.T}
    \norm[L^2(\partial T)]{w}\lesssim h_T^{-\frac12}\norm[T]{w} + h_T^{\frac12}\norm[T]{\GRAD w},
  \end{equation}
  with hidden multiplicative constant in $\lesssim$ having the same dependencies as in \eqref{eq:elproj:approx.trace}.
  For $m\le s-1$, applying \eqref{eq:trace.T} to the components of $\partial^{\VEC{\alpha}}(\VEC{v}-\elproj[T]{\ell}\VEC{v})$ for every multi-index $\VEC{\alpha}\in\Natural^d$ such that $\sum_{i=1}^d\alpha_i=m$, we obtain
  $$
  \seminorm[{H^m(\Fh[T])^d}]{\VEC{v}-\elproj[T]{\ell}\VEC{v}}
  \lesssim h_T^{-\frac12}\norm[H^m(T)^d]{\VEC{v}-\elproj[T]{\ell}\VEC{v}}
  + h_T^{\frac12}\norm[H^{m+1}(T)^d]{\VEC{v}-\elproj[T]{\ell}\VEC{v}}.
  $$
  The conclusion follows using \eqref{eq:elproj:approx} for $m$ and $m+1$ to bound the terms in the right-hand side.
\end{proof}


\section*{Acknowledgements}
The work of the second author was supported by \emph{Agence Nationale de la Recherche} grants HHOMM (ANR-15-CE40-0005) and fast4hho. The work of the third author was partially supported by the Australian Government through the Australian Research Council's Discovery Projects funding scheme (project number DP170100605).
Fruitful discussions with Matthieu Hillairet (Univ. Montpellier) are gratefully acknowledged.



\begin{footnotesize}
  \bibliographystyle{plain}
  \bibliography{brho}
\end{footnotesize}

\end{document}